\newtheorem{theorem}{Theorem}[section]
\newtheorem{lemma}[theorem]{Lemma}
\newtheorem{corollary}[theorem]{Corollary}
\newtheorem{proposition}[theorem]{Proposition}
\theoremstyle{remark}
\newtheorem{remark}[theorem]{Remark}
\renewenvironment{proof}[1][Proof]{ {\itshape \noindent {#1.}} }{$\Box$
\medskip}
\numberwithin{equation}{section}
\newcommand{\R}{\mathbb{R}}
\newcommand{\Z}{\mathbb{Z}}
\newcommand{\N}{\mathbb{N}}
\newcommand{\Pb}{\mathbb{P}}
\newcommand{\E}{\mathbb{E}}
\newcommand{\C}{\mathcal{C}}
\newcommand{\eps}{\varepsilon}
\newcommand{\la}{\langle}
\newcommand{\ra}{\rangle}
\newcommand{\X}{\mathcal{X}}
\newcommand{\1}{\mathbbm{1}}
\newcommand{\bfE}{\mathbf{E}}
\newcommand{\cL}{\mathcal{L}}
\newcommand{\rmF}{\mathrm{F}}
\newcommand{\x}{\mathbf{x}}
\definecolor{darkmagenta}{rgb}{1,0,1}
\newcommand{\gu}[1]{\textcolor{black}{#1}}
\newcommand{\be}{\begin{equation}}
\newcommand{\ee}{\end{equation}}
\begin{document}

\title{A PDE hierarchy for directed polymers in random environments}
\author{Yu Gu, Christopher Henderson}

\address[Yu Gu]{Department of Mathematics, Carnegie Mellon University, Pittsburgh, PA 15213}

\address[Christopher Henderson]{Department of Mathematics, University of Arizona, Tucson, AZ 85721}

\maketitle

\begin{abstract}
For a Brownian directed polymer in a Gaussian random environment, with $q(t,\cdot)$ denoting the quenched endpoint density and  \[
Q_n(t,x_1,\ldots,x_n)=\bfE[q(t,x_1)\ldots q(t,x_n)],
\] we derive a hierarchical PDE system satisfied by $\{Q_n\}_{n\geq1}$. We present two applications of the system:  (i) \gu{we compute the generator of $\{\mu_t(dx)=q(t,x)dx\}_{t\geq0}$ for some special   functionals, where $\{\mu_t(dx)\}_{t\geq0}$ is viewed as a Markov process taking values in the space of probability measures;} (ii) in the high temperature regime with $d\geq 3$, we prove a quantitative central limit theorem for the annealed endpoint distribution of the diffusively rescaled polymer path. \gu{We also study a nonlocal diffusion-reaction equation motivated by the generator and establish a super-diffusive $O(t^{2/3})$ scaling.}


%
%
%

\end{abstract}
\maketitle

\section{Introduction}



The study of directed polymers in random environments has witnessed important progress in recent years. A common feature of the  models is an interaction between a reference path measure, which is typically given by a random walk or a Brownian motion, and a background random environment. The polymer measure is then formulated as the Gibbs measure with a Hamiltonian describing the accumulated energy collected along the path in the random environment. The physically interesting quantities include the fluctuations of the free energy, the typical behaviors of the paths, and so on, see e.g. the books \cite{comets2017directed,den2009random,giacomin2007random} and the references therein.

While the random walk/Brownian motion is diffusive, the random environment can change the polymer's behavior drastically. Indeed, when the temperature is low, the typical path is expected to be super-diffusive, and the transverse displacement of the path of length $T$ is expected to be of order $T^{\xi}$ with an exponent $\xi>\tfrac12$. In $d=1$, it has been conjectured that $\xi=\tfrac23$ and the directed polymer model falls into the KPZ universality class.  
The proofs of this conjecture in several settings can be found, for example, in
\cite{amir2011probability,balazs2011fluctuation,barraquand2017random,borodin2014macdonald,borodin2014free,borodin2013log,corwin2016kpz,ferrari2006scaling,johansson2000shape,landim2004superdiffusivity,quastel2007t1,seppalainen2012scaling}. For a more complete list, we refer to the review articles \cite{corwin2012kardar,quastel2015one}. Another feature of the polymer paths in low temperatures is that they localize and concentrate in small regions, see e.g. \cite{bates2016endpoint,carmona2006strong,comets2003directed,vargas2007strong} and the references therein.

In this paper, we consider the Wiener measure as the reference path measure and a generalized spacetime Gaussian random field as the environment, and our focus will be on the endpoint distribution of the polymer path. 
The results presented in the sequel offer a possibly new perspective of this problem.  We develop a deterministic hierarchy governing the evolution of the endpoint distribution. \gu{From the hierarchy, we compute the generator of the Markov process associated with the endpoint distribution, for linear   functionals.} 
In $d\geq 3$ and for high temperatures, we make use of the hierarchy to quantify the diffusive behavior of the polymer path. \gu{We also study a nonlocal reaction-diffusion equation motivated by the generator, and establish a super-diffusive behavior with exponent $\tfrac23$; that is, we establish spreading on spatial scales $O(t^{2/3})$.} Our arguments are based on a mixture of tools from both probability and partial differential equations.

\subsection{Main result}


Let $\eta$ be a spacetime white noise  built on the probability space $(\Omega,\mathcal{F},\mathbf{P})$ and the expectation with respect to $\eta$ is denoted by $\bfE$. Fix a mollifier $0 \leq \phi\in \C_c^{\infty}(\R^d)$ with $\int \phi=1$.  We smooth $\eta$ in the $x-$variable and define
\[
\eta_\phi(t,x)=\int_{\R^d}\phi(x-y)\eta(t,y)dy,
\]
which is a generalized Gaussian random field. The covariance function of $\eta_\phi$ is then given by
\[
\bfE[\eta_\phi(t,x)\eta_\phi(s,y)]=\delta(t-s)R(x-y),
\]
with the spatial covariance  function
\[
R(x)=\int_{\R^d} \phi(x+y)\phi(y)dy \in C_c^\infty(\R^d).
\]

Let $(\Sigma,\mathcal{A},\Pb)$ be another probability space and $w$ be a Brownian motion built on it. The initial location $w_0$ is distributed according to $\mu_0(dx)$. Throughout the paper, we assume 
\[
\mu_0(dx)=q_0(x)dx, \quad\quad\quad\quad   q_0\geq 0, \quad \int_{\R^d} q_0(x)dx=1,
\] 
and consider the two cases (i) $q_0\in C_c(\R^d)$ (ii) $q_0(x)=\delta(x)$. While other initial distributions can be considered as well, our main focus here is on $q_0(x)=\delta(x)$. We denote the expectation with respect to $w$ by $\E_{\mu_0}$, and define the energy of any Brownian path $w:[0,T]\to\R^d$ in the Gaussian environment $\eta_\phi$ by
\begin{equation}\label{e.defHt}
H(T,w)=\int_0^T \eta_\phi(t,w_t)dt.
\end{equation}
 In this paper, we study the endpoint distribution of the random polymer, obtained by tilting the Brownian motion by a factor of $e^{\beta H(T,w)}$, where $\beta>0$ is the inverse temperature; that is, for any $T\geq 0$, we are interested in:
 \begin{equation}\label{e.endpoint}
\begin{aligned}
&\mu_T(dx)=q(T,x)dx, \\
&q(T,x)=Z(T)^{-1}\E_{\mu_0}[\delta(w_T-x)\exp(\beta H(T,w)-\tfrac12\beta^2R(0)T)],\ \ \text{ and}\\
&Z(T)=\E_{\mu_0}[\exp(\beta H(T,w)-\tfrac12\beta^2R(0)T)].
\end{aligned}
\end{equation}

In $d=1$, we also consider the random environment given by the spacetime white noise $\eta$, without any mollification in the spatial variable. To unify the notation, we allow $\phi$ to be the Dirac function $\phi(x)=\delta(x)$ in $d=1$, and the spatial covariance function in this case is
\[
R(x)=\int_{\R} \phi(x+y)\phi(y)dy=\delta(x).
\]
We define
\begin{equation}\label{e.endpointwhite}
\begin{aligned}
&q(T,x)=Z(T)^{-1}\E_{\mu_0}[\delta(w_T-x):\exp(\beta\textstyle\int_0^T \eta(s,w_s)ds):],\\
&Z(T)=\E_{\mu_0}[:\exp(\beta \textstyle\int_0^T \eta(s,w_s)ds):].
\end{aligned}
\end{equation}
Here $:\exp:$ is the Wick-ordered exponential, see e.g. \cite{nualart1997weighted}. In both \eqref{e.endpoint} and \eqref{e.endpointwhite}, the endpoint density $q$ is related to a stochastic heat equation with a multiplicative noise, which we will make more precise in Section~\ref{s.poshe}. We emphasize that the case of $\phi(x)=R(x)=\delta(x)$ is restricted to $d=1$.

For any $t\geq0, n\geq 1$ and $\mathbf{ x}_{1:n}:=(x_1,\ldots,x_n)\in\R^{nd}$, define the $n-$point density by 
\begin{equation}\label{e.ndensity}
Q_n(t,\mathbf{x}_{1:n})=\bfE[q(t,x_1)\ldots q(t,x_n)].
\end{equation}
For any two functions $f,g$, we define $\la f,g\ra:=\int f(x)g(x)dx$  as long as the integral is well-defined in a standard way. 
If $f$ and $g$ also depend on the $t$ variable, then we write $\la f(t),g(t)\ra=\int f(t,x)g(t,x)dx$; see \Cref{s.notation} for more details on the notations and conventions. We are now able to state the first main result.
\begin{theorem}\label{t.bbgky}
For any $n\geq1$ and $T>0$, if $f\in C_b^{1,2}([0,T]\times \R^{nd})$, we have
\begin{equation}\label{e.eqP}
\begin{aligned}
\la f(T), Q_n(T)\ra=\la f(0), q_0^{\otimes n}\ra&+\textstyle\int_0^T \la (\partial_t+\tfrac12\Delta) f(t), Q_n(t)\ra dt\\
&+\beta^2\sum_{k=0}^2\int_0^T\la f_{k,R}(t),Q_{n+k}(t) \ra dt,
\end{aligned}
\end{equation}
where the functions $f_{k,R}: [0,T]\times\R^{(n+k)d}\to \R$ are given by
\begin{equation}\label{e.defFR}
\begin{aligned}
&f_{0,R}(t,\x_{1:n})=f(t,\x_{1:n})\textstyle\sum_{1\leq i<j\leq n}R(x_i-x_j),\\
&f_{1,R}(t,\x_{1:n},x_{n+1})=-nf(t,\x_{1:n})\textstyle\sum_{i=1}^n R(x_i-x_{n+1}),\\
&f_{2,R}(t,\x_{1:n},x_{n+1},x_{n+2})=\tfrac12n(n+1)f(t,\x_{1:n})R(x_{n+1}-x_{n+2}).
\end{aligned}
\end{equation}
In other words, $\{Q_n\}_{n\geq 1}$ is a weak solution to the following hierarchy:
\begin{equation}\label{e.hierarchy}
\begin{split}
\partial_tQ_n&(t,\mathbf{x}_{1:n})=\tfrac12\Delta Q_n(t,\x_{1:n})+\beta^2Q_n(t,\x_{1:n})\textstyle\sum_{1\leq i<j\leq n}R(x_i-x_j)\\
&-\beta^2n\textstyle\int_{\R^d}Q_{n+1}(t,\x_{1:n},x_{n+1})\sum_{i=1}^nR(x_i-x_{n+1})dx_{n+1}\\
&+\beta^2\tfrac{n(n+1)}{2}\textstyle\int_{\R^{2d}} Q_{n+2}(t,\x_{1:n},x_{n+1},x_{n+2})R(x_{n+1}-x_{n+2})dx_{n+1}dx_{n+2},
\end{split}
\end{equation}
with the initial condition $Q_n(0,\cdot)=q_0^{\otimes n}$.
\end{theorem}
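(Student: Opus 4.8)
\emph{Sketch of the argument.} The plan is to realize $q(t,\cdot)$ as the normalization of a solution of the stochastic heat equation, apply It\^o's formula to the product $q(t,x_1)\cdots q(t,x_n)$ tested against $f$, and then take $\bfE[\cdot]$; the two new terms $Q_{n+1}$ and $Q_{n+2}$ on the right-hand side of \eqref{e.hierarchy} will appear precisely as the It\^o corrections produced by the random normalization.

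First, as discussed in \Cref{s.poshe}, the unnormalized density $u(t,x):=Z(t)q(t,x)$ is the weak/mild solution of the stochastic heat equation
\[
\partial_t u=\tfrac12\Delta u+\beta u\,\dot\eta_\phi,\qquad u(0,\cdot)=q_0,
\]
with $\eta_\phi$ replaced by $\eta$ and the product Wick-ordered when $d=1$ and $\phi=\delta$, and with $Z(t)=\langle u(t,\cdot),\1\rangle$. Two a priori facts will be used repeatedly: for each $t>0$, $q(t,\cdot)$ is a.s.\ a probability density, so $\|q(t,\cdot)\|_{L^1(\R^d)}=1$; and $q(t,\cdot)\rightharpoonup q_0$ as $t\downarrow0$. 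In the mollified case these are standard; when $d=1$ and $\phi=\delta$ we additionally invoke the bound $\bfE\int_0^T\!\!\int_\R q(t,x)^2\,dx\,dt<\infty$ from \Cref{s.poshe}.

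Next, fix $n\ge1$, $T>0$, $f\in C_b^{1,2}([0,T]\times\R^{nd})$, and set $G(t)=\langle f(t,\cdot),q(t,x_1)\cdots q(t,x_n)\rangle=Z(t)^{-n}\langle f(t,\cdot),u(t,x_1)\cdots u(t,x_n)\rangle$, where the bracket denotes integration over $\x_{1:n}\in\R^{nd}$. Applying It\^o's formula, the drift of $d\prod_i u(t,x_i)$ equals $\tfrac12\Delta(q(t,x_1)\cdots q(t,x_n))Z(t)^n+\beta^2\big(u(t,x_1)\cdots u(t,x_n)\big)\sum_{1\le i<j\le n}R(x_i-x_j)$, using $d\langle u(\cdot,x),u(\cdot,y)\rangle_t=\beta^2u(t,x)u(t,y)R(x-y)\,dt$; and the factor $Z(t)^{-n}$ contributes two further drift terms through $d\langle Z\rangle_t=\beta^2\big(\int\!\!\int u(t,z)u(t,z')R(z-z')\,dz\,dz'\big)dt$ and $d\langle u(\cdot,x_i),Z\rangle_t=\beta^2u(t,x_i)\big(\int u(t,z)R(x_i-z)\,dz\big)dt$, which, after cancelling powers of $Z$ against $u$ so that every leftover $u/Z$ becomes $q$, read $-n\beta^2\big(q(t,x_1)\cdots q(t,x_n)\big)\sum_i (R*q(t,\cdot))(x_i)$ and $\tfrac12 n(n+1)\beta^2\big(q(t,x_1)\cdots q(t,x_n)\big)\langle q(t,\cdot),R*q(t,\cdot)\rangle$. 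Collecting these with the $\langle\partial_tf,\cdot\rangle$ term and integrating by parts to move $\tfrac12\Delta$ onto $f$, one finds that $G(T)-G(0)$ equals $\int_0^T\langle(\partial_t+\tfrac12\Delta)f(t),\,q(t,x_1)\cdots q(t,x_n)\rangle\,dt$ plus $\beta^2\int_0^T\langle f(t,\cdot)\Lambda_n(t,\cdot),\,q(t,x_1)\cdots q(t,x_n)\rangle\,dt$ plus $M(T)$, where $\Lambda_n(t,\x_{1:n})=\sum_{1\le i<j\le n}R(x_i-x_j)-n\sum_i(R*q(t,\cdot))(x_i)+\tfrac12 n(n+1)\langle q(t,\cdot),R*q(t,\cdot)\rangle$ and $M$ is a (local) martingale with $M(0)=0$.

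Finally, take $\bfE[\cdot]$. Since $q(t,\cdot)$ is a probability density, $|G(t)|\le\|f\|_\infty$ pathwise; and — in the mollified case using $\|R*q(t,\cdot)\|_\infty\le\|R\|_\infty$ and $\langle q(t,\cdot),R*q(t,\cdot)\rangle\le\|R\|_\infty$, and when $d=1,\phi=\delta$ using the $L^2$ bound above — the drift integrands and the predictable bracket of $M$ are integrable on $[0,T]$, so $M$ is a true martingale and $\bfE[M(T)]=0$. By Fubini and the definitions of $Q_n,Q_{n+1},Q_{n+2}$, the term $\bfE\langle f(t,\cdot)\Lambda_n(t,\cdot),\,q(t,x_1)\cdots q(t,x_n)\rangle$ becomes $\langle f_{0,R}(t),Q_n(t)\rangle+\langle f_{1,R}(t),Q_{n+1}(t)\rangle+\langle f_{2,R}(t),Q_{n+2}(t)\rangle$ with $f_{k,R}$ as in \eqref{e.defFR}; and $\bfE[G(0)]=\langle f(0),q_0^{\otimes n}\rangle$, immediately when $q_0\in C_c$ and, when $q_0=\delta$, by first running the identity on $[\eps,T]$ and then using $q(t,\cdot)\rightharpoonup\delta$ together with dominated convergence as $\eps\downarrow0$. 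This is \eqref{e.eqP}; since it holds for every $f\in C_b^{1,2}$, it is exactly the weak formulation of \eqref{e.hierarchy}. The computation is short once the SHE representation is available; the main work lies in (i) tracking the It\^o corrections of the nonlinear map $u\mapsto u/Z$, where the hierarchy opens up to $Q_{n+1}$ and $Q_{n+2}$ and the combinatorial factors $n$ and $\tfrac12 n(n+1)$ must be obtained correctly, and (ii) justifying the true-martingale property and the $t\downarrow0$ limit when $q_0=\delta$, which is where the moment estimates of \Cref{s.poshe} are used.
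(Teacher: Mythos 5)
For the colored-noise case $R\in C_c^\infty(\R^d)$ your argument is essentially the proof in the paper: the paper phrases the unnormalized object via the Feynman--Kac weights $M(t,w)$ rather than the SHE solution $u$, but the structure is identical -- It\^o on the unnormalized quantity, then the It\^o quotient rule for $X_f Z^{-n}$ producing the brackets $d\la X_f,Z\ra_t$ and $d\la Z,Z\ra_t$ with the factors $-n$ and $\tfrac12 n(n+1)$, then expectation over $\eta$ killing the martingales -- and your bracket computations and combinatorics match \eqref{e.defFR} correctly.

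The genuine gap is your treatment of the case $d=1$, $\phi(\cdot)=\delta(\cdot)$. You run the same It\^o computation ``with $\eta_\phi$ replaced by $\eta$'' and simply substitute $R=\delta$ into $d\la u(\cdot,x),u(\cdot,y)\ra_t=\beta^2u(t,x)u(t,y)R(x-y)\,dt$, but for the white noise this identity is purely formal: for distinct points the bracket vanishes, and the on-diagonal contributions that are supposed to produce the terms $Q_n(t,\x)\sum_{i<j}\delta(x_i-x_j)$ and $\int Q_{n+1}(t,\x,x_{n+1})\sum_i\delta(x_i-x_{n+1})dx_{n+1}$ are exactly what needs to be constructed. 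Making sense of $Q_n$ and $Q_{n+1}$ evaluated on diagonals requires pointwise continuity and uniform Gaussian-type bounds on the $n$-point densities, which is why the paper proves the theorem for mollified noise $\xi_\eps$ first and then passes to the limit $\eps\to0$ using \Cref{l.bdrhoeps}, \Cref{l.conQ} and \Cref{c.conQ} together with dominated convergence (the change of variables $x_j\mapsto x_i-\eps x_j$ in the $R_\eps$ terms). Your substitute for this -- an a priori bound $\bfE\int_0^T\!\!\int_\R q(t,x)^2dx\,dt<\infty$ attributed to \Cref{s.poshe} -- does not appear there, and even granted it would not define the delta-interaction terms, since an $L^2$ bound gives no meaning to $Q_2(t,x,x)$ or to the limit of $\int R_\eps(x-y)Q_{2}(t,x,y)\,dy$ without continuity. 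So the white-noise half of the theorem needs the approximation argument (or an equivalent regularization), not a direct It\^o computation. The remaining points -- the true-martingale justification and the $q_0=\delta$ initial condition handled by starting from $t=\eps$ -- are fine, though the latter detour is not needed once one works, as the paper does, with $w_0\sim\mu_0$ directly.
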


\subsection{Applications of the PDE hierarchy}

Let $\mathcal{M}_1(\R^d)$ be the space of probability measures on $\R^d$. Due to white-in-time correlation of $\eta$, $\{\mu_T\}_{T\geq0}$ is a Markov process taking values in $\mathcal{M}_1(\R^d)$. For any $f\in C_b(\R^d)$, we associate it in the natural way with a functional $\mathrm{F}_f:\mathcal{M}_1(\R^d)\to \R$ given by
\[
\mathrm{F}_f(\mu)=\la f,\mu\ra =\int_{\R^d} f(x) \mu(dx),
\]
where we abused the notation to  also let $\la\cdot,\cdot\ra$ denote the pairing between $C_b(\R^d)$ and $\mathcal{M}_1(\R^d)$. Denote the generator of $\{\mu_T\}_{T\geq0}$ by $\mathcal{L}$, and let $\star$ denote the convolution. An immediate consequence of Theorem~\ref{t.bbgky} is 
\begin{corollary}\label{c.generator}
Assume $\mu_0(dx)=q_0(x)dx$ with $q_0\in C_c(\R^d)$. For any $f\in C_b^2(\R^d)$,
\begin{equation}\label{e.generatorcolor}
\cL \rmF_f(\mu_0)=\la \tfrac12\Delta f,q_0\ra+\beta^2\la f, \mathcal{T} q_0\ra, 
\end{equation}
with the operator $\mathcal{T}$ defined as
\begin{equation}\label{e.defLR}
\begin{aligned}
\mathcal{T} q_0(x)= \la R\star q_0,q_0\ra q_0(x)-q_0(x)R\star q_0(x).
\end{aligned}
\end{equation}
\end{corollary}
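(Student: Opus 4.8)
The plan is to extract the generator directly from the $n=1$ instance of \Cref{t.bbgky}. Since $\{\mu_T\}_{T\ge0}$ is a time-homogeneous Markov process started from the \emph{deterministic} measure $\mu_0=q_0\,dx$, the quantity $\cL\rmF_f(\mu_0)$ is the right derivative at $T=0$ of $T\mapsto\bfE[\rmF_f(\mu_T)]$. By definition $\rmF_f(\mu_T)=\la f,q(T)\ra$ and $\bfE[q(T,x)]=Q_1(T,x)$, so $\bfE[\rmF_f(\mu_T)]=\la f,Q_1(T)\ra$, while $\rmF_f(\mu_0)=\la f,q_0\ra=\la f,Q_1(0)\ra$. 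Hence it suffices to compute $\lim_{T\downarrow0}T^{-1}\big(\la f,Q_1(T)\ra-\la f,q_0\ra\big)$.

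I would apply \eqref{e.eqP} with $n=1$ to the time-independent test function $f\in C_b^2(\R^d)\subset C_b^{1,2}([0,T]\times\R^d)$, so that $\partial_t f\equiv0$. Reading off \eqref{e.defFR} at $n=1$: the sum defining $f_{0,R}$ is empty, hence $f_{0,R}\equiv0$; $f_{1,R}(t,x_1,x_2)=-f(x_1)R(x_1-x_2)$; and $f_{2,R}(t,x_1,x_2,x_3)=f(x_1)R(x_2-x_3)$. Thus, for every $T>0$,
\[
\la f,Q_1(T)\ra=\la f,q_0\ra+\int_0^T\la\tfrac12\Delta f,Q_1(t)\ra\,dt-\beta^2\int_0^T\la f(x_1)R(x_1-x_2),Q_2(t)\ra\,dt+\beta^2\int_0^T\la f(x_1)R(x_2-x_3),Q_3(t)\ra\,dt.
\]

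Next I would divide by $T$ and let $T\downarrow0$. Each integrand is bounded uniformly in $t$: one has $|\la\tfrac12\Delta f,Q_1(t)\ra|\le\tfrac12\|\Delta f\|_\infty$, and, because $q(t,\cdot)$ is a probability density for every $t$, the convolution $R\star q(t)$ is bounded by $\|R\|_\infty$, which gives $|\la f(x_1)R(x_1-x_2),Q_2(t)\ra|\le\|f\|_\infty\|R\|_\infty$ and $|\la f(x_1)R(x_2-x_3),Q_3(t)\ra|\le\|f\|_\infty\|R\|_\infty$. Combining these bounds with the right-continuity of $t\mapsto Q_n(t)$ at the origin (which follows from the Feynman--Kac / stochastic heat equation representation of $q$ recalled in \Cref{s.poshe}) and with $Q_n(0)=q_0^{\otimes n}$, the three time-averages converge to the values of their integrands at $t=0$, so
\[
\cL\rmF_f(\mu_0)=\la\tfrac12\Delta f,q_0\ra-\beta^2\la f(x_1)R(x_1-x_2),q_0(x_1)q_0(x_2)\ra+\beta^2\la f(x_1)R(x_2-x_3),q_0(x_1)q_0(x_2)q_0(x_3)\ra.
\]

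Finally I would simplify by Fubini. Since $R$ is even, $\int R(x_1-x_2)q_0(x_2)\,dx_2=(R\star q_0)(x_1)$, so the $Q_2$-term equals $\la f,\,q_0\,(R\star q_0)\ra$, while the $Q_3$-term factors as $\la f,q_0\ra\,\la R\star q_0,q_0\ra$. Therefore $\cL\rmF_f(\mu_0)=\la\tfrac12\Delta f,q_0\ra+\beta^2\big(\la R\star q_0,q_0\ra\la f,q_0\ra-\la f,\,q_0\,(R\star q_0)\ra\big)=\la\tfrac12\Delta f,q_0\ra+\beta^2\la f,\mathcal{T}q_0\ra$ with $\mathcal{T}$ as in \eqref{e.defLR}, which is \eqref{e.generatorcolor}. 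The only nonroutine step is the passage to the limit $T\downarrow0$ inside the time integrals, i.e. the right-continuity of $t\mapsto Q_n(t)$ at $t=0$ for $n=1,2,3$; the remainder is bookkeeping in the hierarchy together with an application of Fubini.
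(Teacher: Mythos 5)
Your proposal is correct and takes essentially the same route as the paper: apply the $n=1$ case of \Cref{t.bbgky}, form the difference quotient, pass to the limit $T\downarrow 0$ using the continuity of $Q_n(t)$ at $t=0$ with limit $q_0^{\otimes n}$ (the paper's \Cref{l.tzero}, combined with the bounds of \Cref{c.conQ} and dominated convergence), and then identify the limit with $\la\tfrac12\Delta f,q_0\ra+\beta^2\la f,\mathcal{T}q_0\ra$. The only minor difference is that your domination argument via $\|R\|_\infty$ applies to the colored-noise case, while the paper writes out the white-noise case $R(\cdot)=\delta(\cdot)$, where one instead uses the pointwise bounds of \Cref{c.conQ} to control the on-diagonal quantities $Q_2(t,x,x)$ and $Q_3(t,x,y,y)$; this does not change the structure of the argument.
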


Another application of Theorem~\ref{t.bbgky} is to study the diffusive behavior of the polymer endpoint in a high temperature regime when $d\geq3$. It is well-known from the classical work \cite{albeverio,bolt,commets,spencer,Song} that in this case and under a diffusive rescaling, the polymer endpoint converges to a standard normal distribution in the quenched sense. In our notation, as $q(T,\cdot)$ denotes the quenched density of the endpoint of the polymer of length $T$, 
 the result says that for almost every realization of the random environment, and any $h\in C_b(\R^d)$, we have
\begin{equation}\label{e.qinv}
\int_{\R^d} h(x) T^{d/2}q(T,\sqrt{T}x) dx\to \int_{\R^d} h(x) G_1(x)dx, \quad\quad \mbox{ as }T\to\infty.
\end{equation}
Here $G_1(\cdot)$ is the density of 
$N(0,\mathrm{I}_d)$. The results in \cite{bolt,commets,spencer} are for a discrete i.i.d. random environment. In the setting of the continuous Gaussian environment considered in this paper, the same result was proved in \cite{chiranjib}. 
The above quenched central limit theorem~\eqref{e.qinv} immediately implies the annealed one (recall that $Q_1=\E[q]$)
 \begin{equation}\label{e.ainv}
 \int_{\R^d} h(x) T^{d/2}Q_1(T,\sqrt{T}x) dx\to \int_{\R^d} h(x) G_1(x)dx, \quad\quad \mbox{ as }T\to\infty.
 \end{equation}
As $\{Q_n\}_{n\geq1}$ solves the PDE hierarchy \eqref{e.hierarchy}, which can be viewed as a ``perturbation'' of the heat equation, it is natural to ask if we can analyze the system of equations and show that the ``perturbation'' is indeed small in this asymptotic regime. It turns out that the hierarchy  provides a nice analytic framework for us to give a simple proof of \eqref{e.ainv} and to also quantify  the convergence rate.
  
Let $X_T$ denote a random variable with the density $T^{d/2}Q_1(T,\sqrt{T}x)$, then the Wasserstein distance between $X_T$ and $N(0,\mathrm{I}_d)$ is defined as 
 \[
 d_{\mathrm{W}}(X_T,N(0,\mathrm{I}_d)):=\sup_{h\in\mathrm{Lip}(1)}\left|\int_{\R^d} h(x)T^{d/2}Q_1(T,\sqrt{T}x) dx-\int_{\R^d} h(x) G_1(x)dx\right|,
 \]
 where $\mathrm{Lip}(1)=\{h\in C(\R^d): |h(x)-h(y)|\leq |x-y|\}$.
  \begin{theorem}\label{t.qclt}
 Assume $q_0(\cdot)=\delta(\cdot)$. In $d\geq3$, there exist positive constants $\beta_0(d,R),C(d,R,\beta)$ such that if $\beta<\beta_0$, we have for all $T>0$ that
 \begin{equation}\label{e.wdbd}
 \begin{aligned}
	 d_{\mathrm{W}}(X_T,N(0,\mathrm{I}_d))
		 \leq  C\left(\tfrac{\log T}{\sqrt{T}}\1_{d=3}+\tfrac{1}{\sqrt{T}}\1_{d\geq 4}\right).
 \end{aligned}
 \end{equation}
 \end{theorem}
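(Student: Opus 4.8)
The plan is to work directly with the first equation of the hierarchy~\eqref{e.hierarchy},
\[
\partial_t Q_1(t,x)=\tfrac12\Delta Q_1(t,x)-\beta^2\int_{\R^d}Q_2(t,x,y)R(x-y)\,dy+\beta^2\int_{\R^{2d}}Q_3(t,x,y,z)R(y-z)\,dy\,dz,
\]
and to control, in a suitable weak sense, the two ``coupling'' terms on the right-hand side. After the diffusive rescaling $\widetilde Q_1(T,x)=T^{d/2}Q_1(T,\sqrt T x)$, the Laplacian produces the Gaussian $G_1$, so we want to show that the contribution of the $\beta^2$-terms, integrated in time and tested against a Lipschitz function $h$, is of the stated order. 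The natural vehicle is Duhamel's formula: writing $G_t$ for the heat kernel, $Q_1(T)=G_T\star\delta+\beta^2\int_0^T G_{T-s}\star\big(\text{error terms at time }s\big)\,ds$, so that
\[
\Big|\la h,\widetilde Q_1(T)\ra-\la h,G_1\ra\Big|\le\beta^2\int_0^T\Big|\la h_{T-s},\, n\text{-point error at }s\ra\Big|\,ds
\]
for an appropriate rescaled test function $h_{T-s}$; since $h$ is Lipschitz, $\nabla h$ is bounded, and the heat semigroup will let us trade a derivative for a factor $(T-s)^{-1/2}$, which is exactly what generates the $\sqrt T$ and $\log T$ in~\eqref{e.wdbd}.

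The key analytic input I expect is an a~priori bound, uniform in $T$ and valid for $\beta<\beta_0(d,R)$ in $d\ge3$, controlling the annealed correlation functions: something of the form
\[
\int_{\R^{2d}}Q_2(s,x,y)R(x-y)\,dx\,dy\le\frac{C}{(1+s)^{d/2}},\qquad \int_{\R^{3d}}Q_3(s,x,y,z)R(y-z)\,dx\,dy\,dz\le\frac{C}{(1+s)^{d/2}},
\]
or more precisely pointwise-in-$x$ variants $\int Q_2(s,x,y)R(x-y)\,dy\le C(1+s)^{-d/2}g(x/\sqrt s)$ with a Gaussian-type profile $g$. Such bounds are the continuum analogue of the classical $L^2$/replica estimates that underlie the quenched CLT of~\cite{chiranjib,albeverio,bolt,commets,spencer,Song}: in $d\ge3$ and at high temperature the two-body interaction is subcritical, the partition function is uniformly $L^2$-bounded, and the pair correlation decays diffusively. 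I would establish them by a Feynman--Kac / moment representation for $Q_2$ and $Q_3$ (the $Q_2$-moment is governed by two Brownian paths with an attractive $R$-interaction, whose Green's function is finite for small $\beta$ when $d\ge3$), together with a Gronwall/contraction argument on the hierarchy itself, using that $R\in C_c^\infty$ and the heat kernel estimates. The decay rate $(1+s)^{-d/2}$ comes from the local-time of the difference of two independent Brownian motions, equivalently from $\int_0^\infty G_s(0)\,ds<\infty$ for $d\ge3$.

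With such a decay estimate in hand, the endgame is a one-line integration: substituting into the Duhamel bound,
\[
\beta^2\int_0^T\frac{C}{\sqrt{T-s}}\cdot\frac{1}{(1+s)^{d/2}}\,ds\le C'\Big(\tfrac{\log T}{\sqrt T}\1_{d=3}+\tfrac1{\sqrt T}\1_{d\ge4}\Big),
\]
the $\log T$ in $d=3$ arising precisely from the borderline integrability of $(1+s)^{-3/2}$ against $(T-s)^{-1/2}$ near $s\sim T$, and the faster $T^{-1/2}$ rate in $d\ge4$ because $(1+s)^{-d/2}$ is then integrable at infinity. One subtlety is the singular initial condition $q_0=\delta$: the rescaled density $\widetilde Q_1(T,\cdot)$ is a genuine probability density for every $T>0$ (total mass one, since $\int R<\infty$ and the hierarchy preserves mass in the sense that $\int Q_1(t,x)dx=1$), so testing against $\mathrm{Lip}(1)$ functions is legitimate, but one must check the Duhamel representation is valid starting from $\delta$ — this is standard since $G_{T-s}$ is smoothing for $s<T$, and the $s\to0$ end is harmless because the error terms vanish there. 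I expect the main obstacle to be making the a~priori correlation bound genuinely uniform in $T$ with the sharp $(1+s)^{-d/2}$ rate — i.e. closing the hierarchy estimate rather than just proving finiteness — which is where the restriction $\beta<\beta_0$ and $d\ge3$ is used in an essential way; everything after that is the elementary time integral above.
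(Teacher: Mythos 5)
Your overall skeleton (work with the $n=1$ equation of \eqref{e.hierarchy} in Duhamel/weak form, invoke Gaussian bounds on $Q_2,Q_3$ valid for $\beta<\beta_0$ in $d\geq3$ proved via Feynman--Kac/replica and negative moments, then do a time integral) matches the paper's, and the a priori input you isolate is exactly the paper's Lemma~\ref{l.bdQn}; your handling of the $\delta$ initial data is also fine. The genuine gap is at the central step: you never use the cancellation between the two coupling terms, and without it no decay can come out. Since $\int_{\R^d} Q_3(s,x,y,z)\,dx=Q_2(s,y,z)$, the two error terms in the $Q_1$ equation carry \emph{identical} total mass $\int Q_2(s,y,z)R(y-z)\,dy\,dz\sim s^{-d/2}$, and on the diffusive bulk the rescaled test function is of size $O(1)$ (it is approximately the constant $\la h,G_1\ra$); hence estimating the two terms separately gives, for every $d\geq3$, a contribution $\beta^2\int_1^T s^{-d/2}\,O(1)\,ds=O(\beta^2)$, a constant that does not tend to $0$ as $T\to\infty$. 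The paper's Lemma~\ref{l.errde} rewrites the $Q_2$-term as a marginal of $Q_3$ so that only the difference $f_\eps(t,x)-f_\eps(t,y)$ of the backward-heat test function survives; the gain is then the Lipschitz constant $\eps=T^{-1/2}$ of the rescaled test function multiplied by $|x-y|$, which has average $O(\sqrt s)$ under the Gaussian bound on $Q_3$, while the compactly supported factor $R(y-z)$ supplies the $s^{-d/2}$ decay. This yields $T^{-1/2}\sqrt s\,s^{-d/2}$ per unit time and hence $T^{-1/2}\log T$ for $d=3$ and $T^{-1/2}$ for $d\geq4$.

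Your proposed substitute for this mechanism, ``trade a derivative for a factor $(T-s)^{-1/2}$'' via the heat semigroup, does not work: the smoothing bound $\|\nabla G_{T-s}\star h\|_\infty\lesssim (T-s)^{-1/2}\|h\|_\infty$ needs $h$ bounded (a $\mathrm{Lip}(1)$ test function for $d_{\mathrm{W}}$ is not), and, more importantly, no derivative falls on the pairing at all unless one has already exploited the equal-mass cancellation, after which the relevant factor is the Lipschitz constant $T^{-1/2}$, not $(T-s)^{-1/2}$. The mismatch is visible in your endgame: $\int_0^T(T-s)^{-1/2}(1+s)^{-3/2}\,ds=O(T^{-1/2})$ with no logarithm (split the integral at $T/2$), so the $d=3$ log cannot arise ``near $s\sim T$'' as you assert; in the correct argument it arises from $\int_1^T s^{-1}\,ds$, i.e., from all intermediate time scales. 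In short, the decay bounds you list are necessary but not sufficient; the missing idea is the exact rewriting of the $Q_2$-term as a $Q_3$-marginal and the resulting difference of test-function values, which is where the quantitative rate actually comes from.
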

 
 \gu{Similar results were obtained in \cite[Theorem 4]{bold}, and these suggest that the error estimates above are sharp when $d\geq 5$.}
 
 Of particular interest is the mean square displacement of the polymer endpoint. We have the following error bound.
 \begin{theorem}\label{t.msd}
 Under the same assumption of Theorem~\ref{t.qclt}, it holds for all $T>0$ that
 \begin{equation}\label{e.msdbd}
  \big|\tfrac{1}{T}\int_{\R^d} |x|^2 Q_1(T,x)dx-d\big|\leq C\left(\tfrac{1}{\sqrt{T}}\1_{d=3}+\tfrac{\log T}{T}\1_{d=4}+\tfrac{1}{T}\1_{d\geq5}\right).
  \end{equation}
 \end{theorem}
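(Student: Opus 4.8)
The plan is to take $f(x) = |x|^2$ as a (formal) test function in the $n=1$ case of Theorem~\ref{t.bbgky} and track the resulting identity for the second moment $m(T) := \int_{\R^d} |x|^2 Q_1(T,x)\,dx$. Since $|x|^2$ is not bounded, the first step is a routine truncation/approximation argument: replace $|x|^2$ by a smooth compactly supported cutoff $|x|^2\chi_M(x)$, apply \eqref{e.eqP}, and pass to the limit $M\to\infty$ using moment bounds on $Q_1(T,\cdot)$ that follow from the stochastic heat equation representation (the tail decay needed should already be available, or follow from the $d\ge 3$, small-$\beta$ analysis underlying Theorem~\ref{t.qclt}). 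With $\Delta |x|^2 = 2d$ and no $k=0$ term (the sum $\sum_{i<j}$ is empty when $n=1$), the $n=1$ identity becomes
\begin{equation}\label{e.msd-id}
m(T) = \int_0^T d\,dt \;-\; \beta^2\int_0^T \int_{\R^{2d}} |x_1|^2\, R(x_1-x_2)\, Q_2(t,x_1,x_2)\,dx_1 dx_2\, dt \;+\; \beta^2\int_0^T \int_{\R^{2d}} |x_1|^2\, R(x_2-x_3)\cdots
\end{equation}
wait — more precisely, using $f_{1,R}$ and $f_{2,R}$ from \eqref{e.defFR} with $n=1$,
\begin{equation}\label{e.msd-id2}
m(T) = dT \;-\; \beta^2\int_0^T \!\!\int_{\R^{2d}} |x_1|^2 R(x_1-x_2) Q_2(t,x_1,x_2)\,dx_{1:2}\,dt \;+\; \beta^2\int_0^T \!\!\int_{\R^{3d}} |x_1|^2 R(x_2-x_3) Q_3(t,x_1,x_2,x_3)\,dx_{1:3}\,dt.
\end{equation}
So $|m(T) - dT|$ is controlled by two error terms involving $Q_2$ and $Q_3$, weighted by $|x_1|^2$ and localized in the remaining variables by $R$ (which is compactly supported, or a delta in $d=1$; here $d\ge3$ so $R\in C_c^\infty$).

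The heart of the matter is then to bound these two error integrals. I would proceed via the Feynman–Kac / mild-solution representation for $Q_n$ associated to the hierarchy \eqref{e.hierarchy}: in the small-$\beta$, $d\ge 3$ regime one expects $Q_2(t,x_1,x_2) \approx Q_1(t,x_1)Q_1(t,x_2)$ up to a correction that is integrable in $t$, and similarly for $Q_3$, so that the time integrals in \eqref{e.msd-id2} converge up to the stated logarithmic corrections. Concretely, I would (i) write $Q_2 = Q_1\otimes Q_1 + (\text{correction})$ using Duhamel on the hierarchy, the correction being driven by the $R(x_1-x_2)$ potential term and the coupling to $Q_3$; (ii) use the diffusive decay $\|Q_1(t,\cdot)\|_\infty \lesssim t^{-d/2}$ together with the compact support of $R$ to estimate the interaction terms — the key gain is that the local-time-type integral $\int_0^t (t-s)^{-d/2}\,ds$ converges for $d\ge 3$, converges with a $\log$ for the critical power after carrying the extra $|x_1|^2$ weight (which costs one factor of $t$, shifting the critical dimension), and (iii) combine these to extract the three regimes $d=3$ (rate $T^{-1/2}$), $d=4$ (rate $T^{-1}\log T$), $d\ge 5$ (rate $T^{-1}$) in \eqref{e.msdbd}. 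In fact much of this bookkeeping is exactly what must already be done for Theorem~\ref{t.qclt}, so I would try to quote or lightly adapt the estimates on $Q_1 - G_{*}$ and on $Q_2, Q_3$ established there rather than redo them.

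The main obstacle I anticipate is handling the $|x_1|^2$ weight against $Q_2$ and $Q_3$: unlike the Wasserstein bound, where test functions are Lipschitz and bounded, here the observable grows quadratically, so I cannot simply use $L^\infty$ bounds on the $Q_n$'s — I need second-moment bounds on $Q_2(t,\cdot,\cdot)$ that are uniform enough in $t$ for the time integral to converge at the claimed rate. This requires propagating a weighted estimate through the Duhamel iteration for the hierarchy, showing that the $|x_1|^2$-weighted mass of the correction term $Q_2 - Q_1\otimes Q_1$ decays like $t^{-d/2+1}$ (times the local interaction), which is the source of the shift from the Wasserstein exponents to those in \eqref{e.msdbd}. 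A secondary technical point is justifying the truncation step rigorously — interchanging $M\to\infty$ with the time integral and with $\bfE$ — but this should follow from a priori Gaussian-type tail bounds on the endpoint density in the high-temperature phase. Once the weighted $Q_2,Q_3$ estimates are in hand, the final assembly is a short computation plugging the decay rates into \eqref{e.msd-id2}.
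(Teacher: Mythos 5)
Your identity for $m(T)$ is correct and is essentially equivalent to the error form the paper derives: the paper chooses the caloric test function solving the backward heat equation and uses $Q_2(t,x,y)=\int Q_3(t,x,y,z)\,dz$ to write the error as one $Q_3$-integral against $f_\eps(t,x)-f_\eps(t,y)$, which for $h(x)=|x|^2$ is the same bookkeeping as your two-term identity. Your rate computation is also the right one: each error term is of size $t^{1-d/2}$ for $t\geq 1$, so $\int_1^T t^{1-d/2}dt$ gives $T^{1/2}$, $\log T$, $O(1)$ in $d=3$, $d=4$, $d\geq5$, and after dividing by $T$ these are the claimed rates. For this particular observable no cancellation between the $Q_2$ and $Q_3$ terms is even needed, since in both terms $R$ ties two integration variables together (producing the $t^{-d/2}$ factor) while the $|x_1|^2$ weight costs only one factor of $t$.

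The genuine gap is the input you need on $Q_2$ and $Q_3$, which you identify as the main obstacle but do not supply. The paper's proof rests on a single pointwise estimate (Lemma~\ref{l.bdQn}): for $d\geq3$ and $\beta$ small, $Q_n(t,x_1,\ldots,x_n)\leq C\prod_j G_t(x_j)$ uniformly in $t>0$. This is proved not from the hierarchy but from the representation $q=u_{q_0}/\int u_{q_0}$, combining uniform-in-time negative moments of the total mass (from \cite{kpz1}) with a replica/Feynman--Kac bound $\bfE[u_{q_0}(t,x)^n]\leq C\,G_t(x)^n$. Once this bound is available, the quadratic weight is harmless because $\int|x|^2G_t(x)dx=dt$, and the whole theorem is the short computation you sketch. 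Your proposed substitute --- writing $Q_2=Q_1\otimes Q_1+{}$correction and propagating weighted estimates through a Duhamel iteration of the hierarchy --- is not carried out, and it is doubtful it can be carried out as stated: the hierarchy does not close (the equation for $Q_2$ couples to $Q_3$ and $Q_4$, and so on), so a Duhamel expansion yields an infinite chain whose control requires precisely the kind of uniform-in-time moment bounds you are trying to produce. Your concern that sup-norm bounds on the $Q_n$ cannot absorb the $|x_1|^2$ weight is resolved not by factorization but by the pointwise Gaussian bound above; without that ingredient (or a proved replacement), the analytic heart of the proof is missing, even though the algebraic identity, the truncation step, and the final rate bookkeeping in your proposal are all correct.
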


There are several results on error estimates for the mean square displacement \cite{albeverio,spencer,Song}, and Theorem~\ref{t.msd} seems to provide the best rate. We discuss the relation of Theorem~\ref{t.msd} to the previous results in more details in Remark~\ref{r.err} below.

\gu{\subsection{A nonlocal reaction-diffusion equation motivated by $\mathcal{L}$}
Since $\{\mu_T\}_{T\geq0}$ is a Markov process, we can consider the forward Kolmogorov equation associated with it. Recall that $\mathcal{L}$ denotes its generator. For any $f\in C_b(\R^d)$, let 
\[
\mathscr{U}:[0,\infty)\times \mathcal{M}_1(\R^d)\to \R
\]
 be the solution to 
\begin{equation}\label{e.abseq}
\begin{aligned}
&\partial_t \mathscr{U}(t,\mu)=\cL \mathscr{U}(t,\mu), \quad \quad t>0,\\
&\mathscr{U}(0,\mu)=\la f,\mu\ra.
\end{aligned}
\end{equation}
With $\mu_0(dx)=q_0(x)dx$, the solution can be written as
\[
\mathscr{U}(t,\mu_0)=\bfE[\la f,\mu_t\ra]=\int_{\R} f(x)Q_1(t,x)dx,
\]
where we used the fact that $\mu_t(dx)=q(t,x)dx$ and $\bfE[q(t,x)]=Q_1(t,x)$. Thus, one can try to study the asymptotic behavior of $Q_1(t,\cdot)$ as $t\to\infty$ by considering the equation \eqref{e.abseq}, rather than the system \eqref{e.bbgky}.}

\gu{Nevertheless, computing the generator $\mathcal{L}$ for general  functionals is difficult, and, in addition, the obtained formula might not be easy to manipulate. In Corollary~\ref{c.generator}, we considered the simplest possible linear   functionals, and the equation \eqref{e.generatorcolor} shows that, for these functionals on $\mathcal{M}_1(\R^d)$, the action of $\cL$ is equivalent to that of the differential operator $\tfrac12\Delta +\beta^2\mathcal{T}$ acting on the corresponding density. This motivates us to consider the following deterministic PDE associated with the operator $\mathcal{T}$. Take the case of $R(\cdot)=\delta(\cdot)$ in $d=1$, and denote the $L^2(\R^d)$ norm by $\|\cdot\|$. For any $f\in \gu{L^2(\R)}$, we have
\[
\mathcal{T} f(x)= \|f\|^2 f(x)-f(x)^2.
\]
Consider the following equation 
\begin{equation}\label{e.maineq}
\begin{aligned}
\partial_t g(t,x)&=\tfrac12\Delta g(t,x)+\beta^2\mathcal{T} g(t,\cdot)\\
&=\tfrac12\Delta g(t,x)+\beta^2\|g(t,\cdot)\|^2 g(t,x)-\beta^2g(t,x)^2, \quad\quad t>0, x\in\R,\\
g(0,x)&=q_0(x).
\end{aligned}
\end{equation}
We see in the sequel that this describes the evolution of a probability density.}

\gu{It is unclear whether \eqref{e.maineq} has anything to do with \eqref{e.abseq}, since we only know that the two evolutions match at $t=0$, which is due to the fact that $\mathscr{U}(0,\mu)=\la f,\mu\ra$ is a linear functional on $\mathcal{M}_1(\R)$. Nevertheless, the following result shows a super-diffusive behavior of $g$ 
with the exponent $\tfrac23$.
\begin{theorem}\label{t.23}
In $d=1$, assume $0\leq\,  q_0\in C_c(\R)$ and $\int_{\R} q_0(x)dx = 1$. For any $p\geq1$, there exists a constant $C=C(p,\beta,q_0)>0$ such that  
\[
C^{-1}T^{\frac{2p}{3}}\leq \int_{\R}|x|^pg(T,x)dx \leq C T^{\frac{2p}{3}}, \quad\quad \mbox{ for all } T\geq1.
\] 
\end{theorem}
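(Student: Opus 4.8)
The plan is to prove matching upper and lower bounds for the moments $m_p(T):=\int_\R |x|^p g(T,x)\,dx$ by exploiting the structure of the equation \eqref{e.maineq}. The first thing I would establish is that $g$ remains a probability density: integrating \eqref{e.maineq} in $x$, the Laplacian term integrates to zero, and the reaction term integrates to $\beta^2\|g(t,\cdot)\|^2\int g - \beta^2\int g^2 = 0$ provided $\int g(t,\cdot)\,dx=1$, which closes by Gronwall; so $\int_\R g(t,x)\,dx\equiv 1$ and $g\geq 0$ (the latter by a comparison/maximum-principle argument, writing the reaction term as $\beta^2(\|g\|^2 - g)g$ and noting the coefficient is bounded). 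The key conserved/monotone quantity is the $L^2$ norm: computing $\tfrac{d}{dt}\tfrac12\|g(t,\cdot)\|^2 = -\tfrac12\|\nabla g\|^2 + \beta^2\|g\|^4 - \beta^2\int g^3$. One should show that $\|g(t,\cdot)\|^2$ \emph{decays} — heuristically the mass spreads out — and pin down its rate, say $\|g(t,\cdot)\|^2 \asymp t^{-1/3}$, since that exponent is exactly what produces the $t^{2/3}$ spatial scale (diffusive time $\tau$ with $\|g\|^2\sim \tau^{-\gamma}$ feeds a reaction rate $\beta^2\|g\|^2$, and balancing $\int_0^t \|g(s)\|^2\,ds$ against the spreading gives the superdiffusive scale when $\gamma=1/3$, i.e. $\int_0^t s^{-1/3}\,ds \sim t^{2/3}$).

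For the \textbf{upper bound} on $m_p(T)$, I would test \eqref{e.maineq} against $|x|^p$ (or a smooth, compactly-justified truncation of it) and control $\tfrac{d}{dt} m_p(t) = \tfrac12\int |x|^p \Delta g + \beta^2\|g(t)\|^2 m_p(t) - \beta^2\int |x|^p g^2$. The Laplacian term gives $\tfrac12 p(p-1) m_{p-2}(t)$, which by interpolation between lower moments and mass is subleading; the nonnegative term $-\beta^2\int |x|^p g^2 \le 0$ is dropped for the upper bound; the dangerous term is $\beta^2\|g(t)\|^2 m_p(t)$, and here one inserts the a priori decay $\|g(t)\|^2 \lesssim t^{-1/3}$ (valid for $t\ge 1$, with the bounded contribution from $[0,1]$ absorbed into the constant), yielding $\tfrac{d}{dt}\log m_p \lesssim t^{-1/3} + (\text{lower-order})$, hence $m_p(T) \lesssim \exp(C T^{2/3})$ — which is far too weak. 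So the naive moment ODE is insufficient, and instead I would run this argument for the \emph{exponential} moment or, better, prove a Gaussian-type pointwise upper bound $g(t,x) \lesssim t^{-1/2} e^{\beta^2 \int_0^t \|g(s)\|^2 ds}\, G_{ct}(x)$ via the Duhamel formula $g(t)=e^{t\Delta/2}q_0 + \int_0^t e^{(t-s)\Delta/2}\beta^2(\|g(s)\|^2 g(s) - g(s)^2)\,ds$, dropping the good $-g^2$ term and using that $\int_0^t \|g(s)\|^2\,ds \lesssim t^{2/3}$; from a pointwise bound of the form $g(t,x)\lesssim t^{-1/2}\exp(C t^{2/3} - |x|^2/(Ct))$ the moment bound $m_p(T)\lesssim T^{2p/3}$ follows by direct integration, the Gaussian factor cutting off $|x|$ at scale $\sqrt{t}\cdot t^{1/6}=t^{2/3}$.

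For the \textbf{lower bound} on $m_p(T)$, the point is that the density cannot stay concentrated: if it did, $\|g(t)\|^2$ would stay large, the reaction term $\beta^2(\|g\|^2 - g)g$ would push mass outward rapidly, contradiction. Concretely I would argue: suppose for contradiction that at time $T$ most of the mass sits in $\{|x|\le \epsilon T^{2/3}\}$; then $\|g(T)\|^2 \gtrsim T^{-2/3}/\epsilon$ (wait — mass 1 in a region of size $T^{2/3}$ gives $\|g\|^2 \gtrsim T^{-2/3}$ only, consistent with decay $t^{-1/3}$ being violated)... more carefully, combine the lower bound $\|g(t)\|^2 \gtrsim t^{-1/3}$ — which I would get from the decay ODE for $\|g\|^2$ together with a reverse inequality, or from the fact that the linear heat flow alone gives $\|e^{t\Delta/2}q_0\|^2 \sim t^{-1/2}$ and the reaction term only \emph{increases} the $L^2$ norm relative to pure diffusion — with the moment–norm inequality $1 = \int g = \int_{|x|\le \rho} g + \int_{|x|>\rho} g \le (2\rho)^{1/2}\|g\|_2 + \rho^{-p} m_p$, optimized over $\rho$, to force $m_p(T) \gtrsim T^{2p/3}$. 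The cleanest route to the lower bound on $\|g\|^2$, and the one I would attempt first, is Duhamel again: $g(t) \ge e^{t\Delta/2}q_0$ is false in general (the $-g^2$ term is negative), but $\partial_t g \ge \tfrac12\Delta g - \beta^2 g^2$ with $\|g\|_\infty$ controlled gives a lower barrier, while the $\|g\|^2$ evolution $\tfrac{d}{dt}\|g\|^2 = -\|\nabla g\|^2 + 2\beta^2\|g\|^4 - 2\beta^2\int g^3$ combined with Nash/Gagliardo–Nirenberg ($\|\nabla g\|^2 \gtrsim \|g\|^6 / \|g\|_1^4 = \|g\|^6$ in $d=1$) yields a closed differential inequality $\tfrac{d}{dt}\|g\|^2 \ge -C\|g\|^6 + 2\beta^2\|g\|^4 - (\text{lower order})$, whose solutions decay no faster than $t^{-1/3}$.

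The main obstacle I anticipate is getting the \emph{sharp} two-sided bound $\|g(t,\cdot)\|^2 \asymp t^{-1/3}$: the upper bound $\lesssim t^{-1/3}$ requires showing the nonlinearity does not re-concentrate mass (the $-\beta^2 g^2$ sink must be shown to dominate the $+\beta^2\|g\|^2 g$ source at the relevant scale — a Nash-type argument using that mass spread over scale $t^{2/3}$ forces $\|g\|^2 \lesssim$ something, but this is circular and must be bootstrapped), and the lower bound $\gtrsim t^{-1/3}$ requires ruling out over-spreading. The self-consistent/bootstrap nature of the estimate — the decay rate of $\|g\|^2$ determines the spreading rate, which in turn feeds back into the decay rate — is the crux, and I would handle it by a continuity argument: assume $\|g(t)\|^2 \le K t^{-1/3}$ on $[1,T]$, deduce $\int_0^T \|g\|^2 \le C(K) T^{2/3}$, deduce the Gaussian upper bound with spreading scale $T^{2/3}$, deduce $\|g(T)\|^2 \le C T^{-1/2} e^{C T^{2/3}}\cdot(\text{normalization})$... which again is too weak, so in fact the correct mechanism must be that the $-g^2$ dissipation alone drives $\|g\|^2$ down like $t^{-1/3}$ independent of the source, via $\tfrac{d}{dt}\|g\|^2 \le 2\beta^2\|g\|^2(\|g\|^2 - \tfrac{\int g^3}{\|g\|^2})$ and the observation that for a probability density $\int g^3 \ge (\int g^2)^2$ fails in general but a correct convexity/rearrangement inequality localizes the sink; resolving precisely this interplay is where the real work lies, and it is plausible the authors instead track the explicit ODE for $\|g\|^2$ after proving it is nondecreasing-then... — in any case, I would expect the heart of the proof to be a clean lemma isolating the rate $\|g(t,\cdot)\|_{L^2}^2 \asymp t^{-1/3}$, with both moment bounds then following by soft interpolation arguments.
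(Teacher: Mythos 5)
Your proposal correctly isolates mass conservation and the supersolution/Duhamel idea, but its central quantitative claim is off, and this is fatal: you propose $\|g(t,\cdot)\|^2 \asymp t^{-1/3}$ as the key estimate, whereas the rate the theorem actually hinges on is $t^{-2/3}$, for $\max_x g(t,x)$ and hence for $\|g\|^2\le \max g$ (by mass one, cf.~\eqref{e.E_less_M}). Your own consistency check exposes the slip: with $\int_0^t\|g\|^2\,ds\sim t^{2/3}$ in the integrating factor, the Gaussian $e^{-x^2/(Ct)}$ is overcome up to $|x|\sim\sqrt{t\cdot t^{2/3}}=t^{5/6}$, not $t^{2/3}$; the balance producing $t^{2/3}$ spreading requires $\|g\|^2\sim t^{-2/3}$, so that $\int_0^t\|g\|^2\sim t^{1/3}$ matches $x^2/(2t)$ at $x\sim t^{2/3}$. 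Moreover, even with the correct integrating factor, a bound of the form $t^{-1/2}e^{ct^{1/3}-x^2/(Ct)}$ alone cannot yield $m_p\lesssim t^{2p/3}$, since it is exponentially large in the bulk $|x|\lesssim t^{2/3}$; the paper's proof splits the moment integral and uses the pointwise bound $g\lesssim t^{-2/3}$ there, i.e.\ \Cref{p.max_decay}, together with the supersolution \eqref{e.supersoln} outside. The crux — which your proposal explicitly leaves unresolved (you acknowledge the circularity of the bootstrap) — is exactly how to beat the diffusive $O(t^{-1/2})$ decay of the maximum. The paper supplies the missing mechanism via the functional inequality $M-E\ge M^4/(C_1D)$ (\Cref{l.dissipation}, in the spirit of Constantin--Kiselev--Oberman--Ryzhik), combined with $\dot M\le -M(M-E)$, $\dot E\le -D$, and a continuity/contradiction argument at the first time $M(t_0)=At_0^{-2/3}$; your suggested Nash/Gagliardo--Nirenberg route cannot, as you yourself note, do better than $t^{-1/2}$, and no substitute for this lemma appears in your sketch.

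The lower bound in your proposal is also argued in the wrong direction. To force $\int|x|^p g(T,x)\,dx\gtrsim T^{2p/3}$ from $\int g=1$ one needs an \emph{upper} bound on the concentration of $g$ — namely $\sup_x g(T,x)\lesssim T^{-2/3}$, which via the elementary variational \Cref{l.minimizer} (the minimizer among densities bounded by $\lambda$ is $\lambda\1_{[-1/(2\lambda),1/(2\lambda)]}$) gives $m_p\gtrsim \lambda^{-p}\sim T^{2p/3}$ — not a lower bound on $\|g\|^2$. In your own inequality $1\le(2\rho)^{1/2}\|g\|_2+\rho^{-p}m_p$, making the first term small requires $\|g\|_2$ small, i.e.\ once more the $t^{-2/3}$ upper bound; a lower bound $\|g\|^2\gtrsim t^{-1/3}$ contributes nothing here, and even an upper bound at your claimed rate $t^{-1/3}$ would only give $m_p\gtrsim t^{p/3}$. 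Thus both halves of the theorem reduce to the single missing estimate $\max_x g(t,\cdot)\lesssim t^{-2/3}$, which is the real content of the paper's proof and is absent from your argument.
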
}

\gu{It is unclear to us whether the above super-diffusion with the ``right'' exponent $\tfrac{2}{3}$ is a coincidence or not. In the case of the spacetime white noise environment in $d=1$,  the function $Q_1$ is the annealed density of the endpoint of the continuum directed random polymer \cite{alberts2014continuum}, and $\{Q_n\}_{n\geq 1}$ is a weak solution of the infinite system:
\begin{equation}\label{e.bbgky}
\begin{aligned}
\partial_t Q_n(t,x_1,\ldots,x_n)=&\tfrac12\Delta Q_n(t,x_1,\ldots,x_n)\\
&+\beta^2Q_n(t,x_1,\ldots,x_n)\textstyle\sum_{1\leq i<j\leq n}\delta(x_i-x_j)\\
&-\beta^2n\textstyle\sum_{i=1}^nQ_{n+1}(x_1,\ldots,x_n,x_i)\\
&+\beta^2\tfrac{n(n+1)}{2}\textstyle\int_{\R}Q_{n+2}(x_1,\ldots,x_n,\tilde{x},\tilde{x})d\tilde{x}, \quad\quad n\geq1.
\end{aligned}
\end{equation}
For this model, the super-diffusion with exponent $\tfrac{2}{3}$ was shown in \cite[Theorem 1.11]{corwin2016kpz}. Taking $n=1$, the above system yields
\begin{equation}\label{e.bbgky1}
\partial_t Q_1(t,x)=\tfrac12\Delta Q_1(t,x)+\beta^2\textstyle\int_{\R} Q_3(t,x,\tilde{x},\tilde{x})d\tilde{x}-\beta^2Q_2(t,x,x).
\end{equation}
Thus, the question reduces to justifying if \eqref{e.maineq} is a reasonable approximation of \eqref{e.bbgky1} or not.
Incidentally, if we make the assumption of a factorized joint density to close the hierarchy
\begin{equation}\label{e.ass}
Q_2(t,x_1,x_2)\approx Q_1(t,x_1)Q_1(t,x_2), \quad \quad Q_3(t,x_1,x_2,x_3)\approx \prod_{j=1}^3 Q_1(t,x_j),
\end{equation}
which is similar in spirit to the molecular chaos assumption in the BBGKY hierarchy of kinetic theory \cite{cercignani2013mathematical}, then \eqref{e.bbgky1} reduces to \eqref{e.maineq}:
\begin{equation}\label{e.ass1}
\partial_t Q_1(t,x)\approx \tfrac12\Delta Q_1(t,x)+\beta^2\|Q_1(t,\cdot)\|^2Q_1(t,x)-\beta^2 Q_1^2(t,x).
\end{equation}
However, we do not claim either \eqref{e.ass} or \eqref{e.ass1} in this paper.}

\subsection{Discussions}

As the endpoint density of the Brownian motion solves the standard heat equation, we look for a counterpart when the Brownian motion is weighted by a random environment. For each fixed realization of the random environment, it is known that the polymer model is equivalent to a diffusion in a (different) random environment \cite[Theorem 2]{bakhtin2018global}. Thus, in the quenched setting, the analogue of the standard heat equation we are looking for is a Fokker-Planck equation with a random coefficient, describing the evolution of the density of the aforementioned diffusion. However, studying either the solution to the Fokker-Planck equation or its ensemble average seems to be as complicated as the polymer model itself; hence, the main message we wish to convey here is the following: rather than studying the single point distribution, one could instead look at the multipoint distributions defined in \eqref{e.ndensity}. By definition, for each $T\geq 0$, $Q_n(T,\cdot)$ is a  probability density on $\R^{nd}$. While we do not have an underlying dynamics that reproduces the evolution of $Q_n$, heuristically, it can be viewed as the joint density of $n$ particles, interacting indirectly through their separate individual interaction with the common random environment, similar to ``independent walkers in the same environment''.  Theorem~\ref{t.bbgky}, which comes from a straightforward application of It\^o's formula, shows that $\{Q_n\}_{n\geq1}$ solves a hierarchical PDE system. In this way, the study of the endpoint distribution of the random polymer, in the annealed setting, may be reduced to the study of $Q_1$ and the analysis of the deterministic PDE system satisfied by $\{Q_n\}_{n\geq1}$.  

We make a few remarks.

\begin{remark}
A result similar to Theorem~\ref{t.bbgky} was proved in \cite[Theorem 3.1]{carmona2006strong} for a different polymer model (albeit not formulated as a PDE hierarchy), with a random walk reference path measure and an environment on $\R_+\times \Z^d$ made of i.i.d. copies of Brownian motions.
\end{remark}

\begin{remark}
We show in the sequel (see \Cref{l.timereversal}) that
\begin{equation}\label{e.numerator}
Q_n(t,x_1,\ldots,x_n)=\bfE\bigg[\frac{u(t,x_1)\ldots u(t,x_n)}{(\int_{\R} u(t,x)dx)^n}\bigg],
\end{equation}
with $u$ solving the stochastic heat equation 
\[
\partial_t u=\tfrac12\Delta u+\beta u\eta_\phi, \quad\quad u(0,\cdot)=q_0(\cdot).
\]
If we only consider the numerator of the r.h.s.~of \eqref{e.numerator}, and define 
\[
\tilde{Q}_n(t,x_1,\ldots,x_n)=\bfE[u(t,x_1)\ldots u(t,x_n)],
\] it is well-known that $\tilde{Q}_n$ solves 
\begin{equation}\label{e.tildeQ}
\partial_t \tilde{Q}_n=\tfrac12\Delta \tilde{Q}_n+\beta^2\tilde{Q}_n\textstyle\sum_{1\leq i<j\leq n} R(x_i-x_j)=:\mathscr{H}_n\tilde{Q}_n.
\end{equation}
In this case there is no coupling between $\tilde{Q}_n$ for different values of $n$, and the Hamiltonian $\mathscr{H}_n$ is the so-called Delta-Bose gas if we have a contact interaction $R(\cdot)=\delta(\cdot)$ in $d=1$. There are many studies on the moments of the stochastic heat equation, either relying on the Feynman-Kac representation of the solution to \eqref{e.tildeQ} or the spectral property of $\mathscr{H}_n$, and we refer to the monograph \cite{davar} and the references therein. The equation \eqref{e.tildeQ} should be compared with \eqref{e.hierarchy}, in which we have additional terms related to $Q_{n+1}$ and $Q_{n+2}$.
\end{remark}

\begin{remark}
For the \emph{annealed} endpoint distribution considered in the present paper, with the density given by $\bfE[q(T,\cdot)]$, it is conjectured that in $d=1$, the rescaled density $T^{\frac23}\bfE[q(T,T^{\frac23}\cdot)]$ converges weakly in space to some universal limit as $T\to\infty$, \gu{see the result on a related last passage percolation model \cite{johansson2003discrete}.} The limit was further identified in \cite{flores2013endpoint,schehr2012extremes}.
\end{remark}

\gu{\begin{remark}
The asymptotics of the solution of~\eqref{e.maineq} are not obvious. In order to understand the exponent $\frac23$ in Theorem~\ref{t.23}, one can make the following back-of-the-envelope computation.  If we assume spreading at spatial scales $O(t^p)$, then to preserve the fact that $g$ is a probability measure, we must have $\sup g \sim O(t^{-p})$.  This yields $\|g\|^2 \sim O(t^{-p})$.  In order to use this, we linearize the equation around zero to obtain
\[
	\partial_t g \approx \frac{1}{2} \Delta g + \beta^2 \|g\|^2 g
		\approx \frac{1}{2} \Delta g + O(t^{-p}) g.
\]
Then, using the large $x$ asymptotics of the heat equation and the fact that the last term yields an integrating factor, we find
\[
	g(t,x)
		\approx e^{\int_0^t O(s^{-p}) ds - \frac{x^2}{2t} + O(\log(t))}
		\approx e^{O(t^{1-p}) - \frac{x^2}{2t}}.
\]
For consistency with our assumption of spreading in $x$ like $O(t^p)$, we require that $g$ is ``large'' to the left of $O(t^p)$ and ``small'' to the right of $O(t^p)$.  This means that the two terms in the exponent should cancel at $x \sim O(t^p)$.  In other words, we require $O(t^{1-p}) = (t^p)^2/2t$.  Solving this yields $p=\frac23$. Unfortunately, this argument is far from rigorous.   Instead, as with the heat equation, in order to establish the spreading behavior of $g$, the key estimate is an upper bound on the $L^\infty$ norm that yields decay to zero at the sharp rate, which is $O(t^{-\frac23})$.   Since the Laplacian (diffusion) can only cause decay like $O(t^{-\frac12})$ in $d=1$, the nonlinear terms have to provide the mechanism for this decay.  Our proof proceeds by establishing a functional inequality relating the two nonlinear terms at any maximum of $g$.  \gu{This, combined with} a differential inequality satisfied by the maximum, shows that $g(t) \lesssim t^{-\frac23}$.  From there we obtain the upper bound in \Cref{t.23} via the construction of a supersolution and the lower bound via a simple variational argument.
\end{remark}}

\gu{\begin{remark}
It is natural to investigate further questions on the PDE \eqref{e.maineq}, including the asymptotics of $T^{\frac{2}{3}}g(T,T^{\frac23}\cdot)$ as $T\to\infty$ and the behavior of $g$ in high dimensions. One can study the equation corresponding to the spatially correlated noise:
\[
\partial_t g(t,x)=\tfrac12\Delta g(t,x)+\beta^2\la R\star g(t,\cdot),g(t,\cdot)\ra g(t,x)-\beta^2g(t,x)R\star g(t,x),
\]
where $R\in C_c^\infty(\R^d)$ is the spatial covariance function of the noise. Compared to \eqref{e.maineq} which is the case of $R(\cdot)=\delta(\cdot)$, the above equation is ``more nonlocal'', 
and the analytic tools used in this paper do not seem to apply.  Indeed, the functional inequalities and delicate identities used in the proof of \Cref{t.23} either are not true or do not make sense and do not have obvious analogues in this more general setting. These are presented in \cite{gh}.
\end{remark}}

\begin{remark}
We note that  the proof of \Cref{t.23} does not use any regularity of $q_0$, and would apply equally well to $q_0$ that is a localized probability measure such as $\delta$; however, it is not immediately obvious that~\eqref{e.maineq} is well-posed with measure initial data.  Hence, in this work, we impose the condition that $q_0 \in C_c(\R)$ in order to avoid technical issues.  In a future work, we show that this condition may be relaxed; that is, the estimates established here are sufficient to establish such a well-posedness result for localized probability measures.
\end{remark}

\begin{remark}
In $d\geq 3$ and the high temperature regime, Theorems~\ref{t.qclt} and \ref{t.msd} concern the annealed distribution $Q_1$, which is obtained after taking an average with respect to the random environment. It is natural to ask about the extra error induced by the random fluctuations of the environment. As the focus of the paper is on the PDE hierarchy, which involves $Q_1$ rather than $q$, we do not study this problem here,  but note that it is not very hard to extract error bounds on the random fluctuations from the proof of Theorem~\ref{t.bbgky}. \gu{We refer to Remark~\ref{r.781} below for more details.}
\end{remark}

\begin{remark}
In Theorems~\ref{t.qclt} and \ref{t.msd}, we are deep in the high temperature regime $\beta\ll1$ to obtain the error estimate in the annealed central limit theorem. The quenched central limit theorem actually holds in the full weak disorder regime $\beta<\beta_c$ for some critical value of  $\beta_c$ \cite{commets}. As we strive for a more quantitative estimate here, it is unclear to us whether similar error estimates can be proved for all $\beta<\beta_c$.
\end{remark}

\begin{remark}\label{r.err}
The first error bound on the convergence of the mean square displacement was proved in \cite[Eq (1.7)]{spencer}, and their result in the discrete setting translates to our case as
\[
\big|\tfrac{1}{T}\int_{\R^d} |x|^2 Q_1(T,x)dx-d\big|\leq C\tfrac{1}{T^{\theta-\delta}}.
\]
Here $\theta=\min(\tfrac{d-2}{4},\tfrac{3}{4})$ and $\delta>0$ can be arbitrarily small.  In \cite{albeverio,Song}, similar results were shown which corresponds to  the following in our setting
\[
\big|\tfrac{1}{T}\int_{\R^d} |x|^2 Q_1(T,x)dx-d\big| \leq C \Big( \tfrac{1}{T^{(d-2)/4}}\1_{2<d<6}+\tfrac{\sqrt{\log T}}{T}\1_{d=6}+\tfrac{1}{T}\1_{d>6}\Big).
\]
It is worth mentioning that in these works, the quenched error estimates were also proved, while we only focus on the annealed case here.
\end{remark}

\subsubsection*{Organization of the paper}
In Section~\ref{s.poshe}, we briefly discuss the connection between the directed polymer and the stochastic  heat equation, which will be used later in our proof. Sections~\ref{s.bbgky}, \ref{s.qclt} and  \ref{s.23} are devoted to the proofs of Theorem~\ref{t.bbgky},  \ref{t.qclt} and \ref{t.23} respectively. In Appendix~\ref{s.she}, we review some basics about stochastic heat equations for the convenience of readers. The proofs of some technical lemmas are presented in Appendix~\ref{s.lem}.

\subsection{Notation and conventions} \label{s.notation}
We recall and define some notation.

(i) The expectation with respect to the Gaussian random environment is denoted by $\mathbf{E}$, and the expectation with respect to Brownian motions is $\E$.

(ii) We consider two cases of spatial covariance functions of the Gaussian environment (a) $R(\cdot)\in C_c^\infty(\R^d), d\geq 1$ and (b)  $R(\cdot)=\delta(\cdot), d=1$.

(iii) The initial distribution $\mu_0(dx)=q_0(x)dx$ is fixed, and we include the two cases (a) $q_0(\cdot)\in C_c(\R^d)$ and (b) $q_0(\cdot)=\delta(\cdot)$.

(iv) For functions $f,g$ and measure $\mu$, we write $\la f,g\ra=\int fg$, $\la f,\mu\ra=\int f(x)\mu(dx)$, and $\|f\|^2=\la f,f\ra$.

(v) We use $\star$ to denote convolution in the spatial variable $x$, and the standard heat kernel of $\partial_t -\tfrac12\Delta$ is $G_t(x)=(2\pi t)^{-d/2}\exp(-|x|^2/(2t))$.


\subsubsection*{Acknowledgement}

YG was partially supported by the NSF through DMS-1907928 and the Center for Nonlinear Analysis of CMU. CH was partially supported by NSF grant DMS-2003110. 
YG would like to thank Xi Geng for several discussions.

\section{Directed polymer and stochastic heat equation}
\label{s.poshe}

In this section, we briefly discuss the relationship between directed polymers and the stochastic heat equation with a multiplicative noise. 

First, we define the time reversal of $\eta$ and $\eta_\phi$:
\begin{equation}\label{e.deftimereversal}
\xi(t,x)=\eta(-t,x), \quad\quad \xi_\phi(t,x)=\eta_\phi(-t,x)=\int_{\R^d} \phi(x-y)\xi(t,y)dy.
\end{equation}
Fix the inverse temperature $\beta>0$. For any $s\in\R$ and $x\in\R^d$, define $U(s,x;t,y)$ as the solution of
\begin{equation}\label{e.shephi}
\begin{aligned}
\partial_t U(s,x;t,y)&=\tfrac12\Delta_y U(s,x;t,y)+\beta\, U(s,x;t,y)\xi_\phi(t,y), \quad\quad t>s, y\in\R^d,\\
U(s,x;s,y)&=\delta(y-x).
\end{aligned}
\end{equation}
Here the product between $U$ and $\xi_\phi$ is interpreted in the It\^o-Walsh sense \cite{walsh1986introduction}, and we have included the spacetime white noise case of $\phi(\cdot)=\delta(\cdot)$ in $d=1$. Then the quenched endpoint density $q(T,x)$, defined in \eqref{e.endpoint} and \eqref{e.endpointwhite}, is also given by 
\begin{equation}\label{e.defq}
q(T,x)=\frac{\int_{\R^d}U(-T,x;0,y)q_0(y)dy}{\int_{\R^{2d}} U(-T,\tilde{x};0,y)q_0(y)dyd\tilde{x}}.
\end{equation}
To see this, consider the spatially correlated case with $\phi\in C_c^\infty(\R^d)$.  We only need to use Feynman-Kac formula \cite{bertini1995stochastic} to rewrite
\begin{equation}\label{e.fk}
\begin{aligned}
\int_{\R^d} U(-T,x;0,y)q_0(y)dy=&\E_{\mu_0}[\delta(w_T-x)e^{\beta\int_0^T\xi_\phi(-s,w_s)ds-\frac12\beta^2R(0)T}]\\
=&\E_{\mu_0}[\delta(w_T-x)e^{\beta H(T,w)-\frac12\beta^2R(0)T}].
\end{aligned}
\end{equation}
Here we recall that $w$ is a standard Brownian motion that is independent from $\xi$, and $\E_{\mu_0}$ denotes the expectation with respect to $w$, with the starting point
\[
w_0\sim \mu_0(dx)=q_0(x)dx.
\] 
For any $y\in\R^d$, let $\E_y$  denote the expectation with respect to the Brownian motion starting at $w_0=y$.  Then we can rewrite \eqref{e.fk} as
\[
\int_{\R^d} U(-T,x;0,y) q_0(y)dy=\int_{\R^d} q_0(y)\E_y[\delta(w_T-x)e^{\beta \int_0^T\xi_\phi(-s,w_s)ds-\frac12\beta^2R(0)T}]dy.
\]

For the case of $\phi(\cdot)=\delta(\cdot)$ in $d=1$, by the definition of the Wick-ordered exponential, we have 
\[
\begin{aligned}
&\E_{\mu_0}[\delta(w_T-x):\exp(\beta\textstyle\int_0^T \eta(s,w_s)ds):]\\
&=\E_{\mu_0}[\delta(w_T-x):\exp(\beta\textstyle\int_0^T \xi(-s,w_s)ds):]=\int_{\R^d}U(-T,x;0,y)q_0(y)dy.
\end{aligned}
\]

\section{Proof of Theorem~\ref{t.bbgky}}
\label{s.bbgky}

We make use of the Feynman-Kac representation in \eqref{e.endpoint} to study the case of spatially correlated noise, i.e., when the spatial covariance function $R(\cdot)\in C_c^\infty(\R^d)$. Through an approximation argument, we derive the corresponding result for the case of spacetime white noise.

\subsection{Colored noise environment: $R(\cdot)\in C_c^\infty(\R^d), d\geq1$}
We first introduce some notation. Fix $n\geq1,T>0$ and a $C_b^{1,2}$ function $f:[0,T]\times \R^{nd}\to \R$, we define 
\begin{equation}\label{e.defYZ}
\begin{aligned}
&M(t,w)=\exp(\beta H(t,w)-\tfrac12\beta^2R(0)t),\\
&Y_f(t)=f(t,w_t^1,\ldots,w_t^n)\textstyle\prod_{j=1}^nM(t,w^j),\ \ \ \text{ and}\\
&X_f(t)=\E_{\mu_0}[Y_f(t)],
\end{aligned}
\end{equation}
where $\{w^j\}_{j=1,\ldots,n}$ are independent copies of Brownian motions built on $(\Sigma,\mathcal{A},\Pb)$. Thus, we have 
\[
X_{\1}(t)=Z(t)^n,
\] where $Z(t)$ is the partition function defined in \eqref{e.endpoint} and  $\1$ stands for the constant function $\1(x)\equiv 1$. With the new notation, we define 
\[
\X_f(t)
	:=\la f(t,\cdot),\mu_t^{\otimes n}\ra
	=\la f(t,\cdot), q(t,\cdot)^{\otimes n}\ra=\frac{X_f(t)}{X_{\1}(t)}=\frac{X_f(t)}{Z(t)^n}.
\]

\begin{proof}[Proof of \eqref{e.eqP}]
In the following, the differential $d$ is the full stochastic differential with respect to both the Gaussian environment and the Brownian motions.

We first note that for each fixed $w$, 
\[
H(t,w)=\int_0^t \eta_\phi(s,w_s)ds
\] is a Brownian motion with variance $R(0)$, and for $w^i,w^j$, we have the bracket process
\[
\la H(\cdot,w^i),H(\cdot,w^j)\ra_t=\int_0^t R(w^i_s-w^j_s)ds.
\]This implies  
\begin{equation}\label{e.gbm}
\begin{aligned}
&dM(t,w)=\beta M(t,w)dH(t,w),\\
&d\la M(\cdot,w^i),M(\cdot,w^j)\ra_t=\beta^2 M(t,w^i)M(t,w^j) R(w^i_t-w^j_t)dt.
\end{aligned}
\end{equation}
We also know that   $H(t,w)$ is independent of $w$.

Now we apply It\^o's formula to $Y_f(t)$:
\[
\begin{aligned}
dY_f(t)
	=&d[f(t,w_t^1,\ldots,w_t^n)\textstyle\prod_{j=1}^n M(t,w^j)]\\
	=&f(t,w_t^1,\ldots,w_t^n)d[\textstyle\prod_{j=1}^n M(t,w^j)]+\textstyle\prod_{j=1}^n M(t,w^j)d[f(t,w_t^1,\ldots,w_t^n)]\\
	&+d\la f(\cdot,w_{\cdot}^1,\ldots,w_{\cdot}^n),\textstyle\prod_{j=1}^n M(\cdot,w^j)\ra_t.
\end{aligned}
\]
By \eqref{e.gbm}, we have 
\[
\begin{aligned}
d\textstyle\prod_{j=1}^n M(t,w^j)=&\beta \textstyle\sum_{k=1}^n\textstyle\prod_{j=1}^n M(t,w^j)dH(t,w^k)\\
&+\beta^2\textstyle\sum_{1\leq k<l\leq n} \textstyle\prod_{j=1}^n M(t,w^j)R(w_t^k-w_t^l)dt.
\end{aligned}
\]
We also have
\[
df(t,w_t^1,\ldots,w_t^n)=\textstyle\sum_{j=1}^n \nabla_j f(t,w_t^1,\ldots,w_t^j)dw_t^j+(\partial_t +\tfrac12\Delta )f(t,w_t^1,\ldots,w_t^n)dt,
\] 
where $\nabla_j$ denotes the gradient with respect to the $j-$th variable and $\Delta=\textstyle\sum_{j=1}^n \nabla_j\cdot\nabla_j$. 
Since $H(t,w)$ is independent of $w$, by \eqref{e.gbm}, we have
\[
\la f(w_{\cdot}^1,\ldots,w_{\cdot}^n),\textstyle\prod_{j=1}^n M(\cdot,w^j)\ra_t\equiv 0.
\]

Thus, we have
\[
\begin{aligned}
Y_f(T)=f(0,w_0^1,\ldots,w_0^n)&+\beta \sum_{k=1}^n\int_0^TY_f(t) dH(t,w^k)+\beta^2\sum_{1\leq k<l\leq n}\int_0^T Y_f(t) R(w_t^k-w_t^l) dt\\
&+\sum_{k=1}^n \int_0^T \prod_{j=1}^n M(t,w^j)\nabla_k f(t,w_t^1,\ldots,w_t^n)dw_t^k\\
&+\int_0^T \prod_{j=1}^nM(t,w^j)(\partial_t+\tfrac12\Delta) f(t,w_t^1,\ldots,w_t^n)dt.
\end{aligned}
\]
Taking expectation with respect to the Brownian motions, we have 
\begin{equation}\label{e.dZ}
\begin{aligned}
X_f(T)=\E_{\mu_0}[Y_f(T)]=&\E_{\mu_0}[f(0,w_0^1,\ldots,w_0^n)]+\beta\sum_{k=1}^n\E_{\mu_0}\big[\int_0^TY_f(t)dH(t,w^k)\big]\\
&+\beta^2\sum_{1\leq k<l\leq n}\int_0^T\E_{\mu_0}[ Y_f(t) R(w_t^k-w_t^l)] dt\\
&+\int_0^T \E_{\mu_0}\big[\prod_{j=1}^nM(t,w^j)(\partial_t+\tfrac12\Delta) f(t,w_t^1,\ldots,w_t^n)\big]dt.
\end{aligned}
\end{equation}
For the second term on the r.h.s., recall that $\eta_\phi(t,x)=\int_{\R^d}\phi(x-y)\eta(t,y)dy$ with $\eta$ a spacetime white noise, using \eqref{e.defHt}, we can rewrite it as 
\begin{equation}\label{e.dZma}
\begin{aligned}
\beta \textstyle\sum_{k=1}^n\E_{\mu_0}[\int_0^TY_f(t)dH(t,w^k)]=\beta\textstyle\sum_{k=1}^n \int_0^T \int_{\R^d} \E_{\mu_0}[Y_f(t) \phi(w^k_t-y)] \eta(t,y)dydt.
\end{aligned}
\end{equation}

Now we apply It\^o's formula to $\X_f(t)=X_f(t)Z(t)^{-n}$:
\begin{equation}\label{e.itoX}
d\X_f(t)=\frac{1}{Z(t)^n}dX_f(t)-\frac{nX_f(t)}{Z(t)^{n+1}}dZ(t)+\frac{n(n+1)X_f(t)}{2Z(t)^{n+2}} d\la Z,Z\ra_t-\frac{n}{Z(t)^{n+1}}d\la X_f,Z\ra_t.
\end{equation}
By \eqref{e.dZ}, the martingale component of $X_f$ is given by \eqref{e.dZma}. A simpler version of \eqref{e.dZ} also gives 
\[
Z(T)=1+\beta\int_0^T\int_{\R^d}\E_{\mu_0}[M(t,w)\phi(w_t-y)]\eta(t,y)dydt,
\] which implies
\begin{equation}\label{e.braket}
\begin{aligned}
d\la X_{f},Z\ra_t=&\beta^2\sum_{k=1}^n\big(\int_{\R^{d}}\E_{\mu_0}\big[f(t,w_t^1,\ldots,w_t^n) \prod_{j=1}^{n+1} M(t,w^j) \phi(w_t^k-y)\phi(w_t^{n+1}-y)\big] dy \big)dt\\
=&\beta^2\sum_{k=1}^n \E_{\mu_0}\big[f(t,w_t^1,\ldots,w_t^n)\prod_{j=1}^{n+1} M(t,w^j)  R(w_t^k-w_t^{n+1})\big] dt.
\end{aligned}
\end{equation}

Combining \eqref{e.dZ}, \eqref{e.dZma}, \eqref{e.itoX}, \eqref{e.braket}, we have
\begin{equation}\label{e.781}
\X_f(T)=\X_f(0)+\sum_{i=1}^4 I_i(T),
\end{equation}
with 
\[
\begin{aligned}
I_1(T)=\int_0^T\frac{1}{Z(t)^n}dX_f(t)=&\beta\sum_{k=1}^n\int_0^T\int_{\R^d}\frac{1}{Z(t)^n}\E_{\mu_0}[Y_f(t)\phi(w_t^k-y)] \eta(t,y)dydt\\
&+\beta^2\sum_{1\leq k<l\leq n}\int_0^T\frac{1}{Z(t)^n}\E_{\mu_0}[ Y_f(t) R(w_t^k-w_t^l)] dt\\
&+\int_0^T \frac{1}{Z(t)^n}\E_{\mu_0}[\prod_{j=1}^n M(t,w^j)(\partial_t+\tfrac12\Delta) f(t, w_t^1,\ldots,w_t^n)]dt,
\end{aligned}
\]
\[
\begin{aligned}
I_2(T)&=-n\int_0^T\frac{X_f(t)}{Z(t)^{n+1}}dZ(t)\\
&=-n\beta\int_0^T\int_{\R^d}\frac{X_f(t)}{Z(t)^{n+1}}\E_{\mu_0}[M(t,w)\phi(w_t-y)] \eta(t,y)dydt,
\end{aligned}
\]
\[
\begin{aligned}
I_3(T)&=\frac{n(n+1)}{2}\int_0^T \frac{X_f(t)}{Z(t)^{n+2}} d\la Z,Z\ra_t\\
&=\frac{\beta^2n(n+1)}{2}\int_0^T \frac{X_f(t)}{Z(t)^{n+2}}\E_{\mu_0}\big[\prod_{j=1}^2M(t,w^j)\cdot R(w^1_t-w^2_t)\big] dt,
\end{aligned}
\]
and 
\[
\begin{aligned}
I_4(T)&=-n\int_0^T \frac{1}{Z(t)^{n+1}}d\la X_f,Z\ra_t\\
&=-n\beta^2\sum_{k=1}^n\int_0^T\frac{1}{Z(t)^{n+1}}\E_{\mu_0}\big[f(t,w_t^1,\ldots,w_t^n)\prod_{j=1}^{n+1} M(t,w^j)  R(w_t^k-w_t^{n+1})\big]  dt.
\end{aligned}
\]
Taking the expectation with respect to $\eta$, we have
\begin{equation}
\begin{aligned}
\bfE[I_1(T)]&=\int_0^T \la \beta^2 f_{0,R}(t)+(\partial_t+\tfrac12\Delta) f(t),Q_n(t) \ra dt, \quad\quad \bfE[I_2(T)]=0,\\
\bfE[I_3(T)]&=\int_0^T \la \beta^2 f_{2,R}(t), Q_{n+2}(t)\ra dt, \quad\quad \bfE[I_4(T)]=\int_0^T \la \beta^2f_{1,R}(t),Q_{n+1}(t)\ra dt.
\end{aligned}
\end{equation}
It suffices to note that $\bfE[\X_f(T)]=\la f(T), Q_n(T)\ra$ and $\X_f(0)=\la f(0),q_0^{\otimes n}\ra$ to complete the proof of \eqref{e.eqP}.
\end{proof}
\gu{
\begin{remark}\label{r.781}
The equation \eqref{e.eqP} results from taking expectation with respect to the noise on both sides of  \eqref{e.781}, in which the martingale terms disappear. If we are interested in the size of the random fluctuations, it suffices to study the martingale terms in $I_1(T),I_2(T)$. Actually, an SPDE satisfied by $q(t,x)$ was derived in \cite[Proposition 3.2]{gk}.
 \end{remark}
}

\subsection{White noise environment: $R(\cdot)=\delta(\cdot), d=1$}

The proof in this case is through an approximation of the spacetime white noise by colored noise. Recall that we only consider $d=1$ in this case.

For each $\eps>0$, define 
\begin{equation}\label{e.defxieps}
\xi_\eps(t,x)=\int_{\R}\phi_\eps(x-y)\xi(t,y)dy, \quad \phi_\eps(x)=\tfrac{1}{\eps}\phi(\tfrac{x}{\eps})
\end{equation}
and the spatial covariance function 
\[
R_\eps(x)=\tfrac{1}{\eps}R(\tfrac{x}{\eps})=\int_{\R} \phi_\eps(x+y)\phi_\eps(y)dy.
\]
For any $s\in\R,x\in\R$, let $U_\eps$ be the solution to 
\begin{equation}\label{e.sheeps}
\begin{aligned}
\partial_t U_\eps(s,x;t,y)&=\tfrac12\Delta_y U_\eps(s,x;t,y)+\beta\,U_\eps(s,x;t,y)\xi_\eps(t,y), \quad\quad t>s,y\in\R,\\
U_\eps(s,x;s,y)&=\delta(y-x).
\end{aligned}
\end{equation}
In other words, $U_\eps$ solves \eqref{e.shephi} with $\xi_\phi$ replaced by $\xi_\eps$. Similarly, the quenched endpoint distribution of the polymer in the environment $\xi_\eps$, with the starting point distributed as $\mu_{0,\eps}(dx)=q_0(x)dx$, is defined as 
\begin{equation}
\mu_{T,\eps}(dx)=q_\eps(T,x)dx, \quad q_\eps(T,x)=\frac{\int_{\R}U_\eps(-T,x;0,y)q_0(y)dy}{\int_{\R^2} U_\eps(-T,\tilde{x};0,y)q_0(y)dyd\tilde{x}}.
\end{equation}

For any $n\geq 1,T\geq0$ and $\x_{1:n}=(x_1,\ldots,x_n)\in\R^n$, define 
\[
Q_{n,\eps}(T,\x_{1:n})=\bfE[q_\eps(T,x_1)\ldots q_\eps(T,x_n)].
\]
By \eqref{e.eqP} (for the case of $\phi \in C_c^\infty(\R^d)$), we have, for any $f\in C_b^{1,2}([0,T]\times \R^n)$, 
\begin{equation}\label{e.eqQeps}
\begin{aligned}
\la f(T),Q_{n,\eps}(T)\ra=\la f(0),q_0^{\otimes n}\ra&+\int_0^T \la (\partial_t+\tfrac12\Delta) f(t), Q_{n,\eps}(t)\ra dt\\
&+\beta^2\sum_{k=0}^2\int_0^T\la f_{k,\eps}(t),Q_{n+k,\eps}(t) \ra dt,
\end{aligned}
\end{equation}
with the shorthand notation $f_{k,\eps}:=f_{k,R_\eps}$. Recall that for any $R\in C_c^\infty(\R^d)$, the functions $f_{k,R}$ were defined in \eqref{e.defFR}, for $k=0,1,2$. 

To prove \eqref{e.eqP} for the case of $\phi(\cdot)=\delta(\cdot)$, we need the following two technical lemmas:
\begin{lemma}\label{l.bdrhoeps}
For any $p\geq 1, t>0,x\in\R$, $q_\eps(t,x)\to q(t,x)$ in $L^p(\Omega,\mathcal{F},\mathbf{P})$, as $\eps\to0$. 
The convergence is uniform for $x\in\R$ and $t$ in compact subsets of $(0,\infty)$. 
In addition, there exists $C=C(p,\beta,T)>0$ such that for all $\eps\in(0,1), t\in (0,T],x\in\R$,
\begin{equation}\label{e.bdrhoeps}
\bfE[|q_\eps(t,x)|^p]+\bfE[|q(t,x)|^p] \leq C (G_t\star q_0(x))^p .
\end{equation}
\end{lemma}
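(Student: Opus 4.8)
The plan is to prove Lemma~\ref{l.bdrhoeps} in three stages: first establish the moment bound \eqref{e.bdrhoeps} uniformly in $\eps$, then prove the $L^p$ convergence of the (normalized) densities $q_\eps \to q$, and finally upgrade to uniform convergence in $x$ and in $t$ on compacts. Throughout I would work with the representation \eqref{e.defq} that writes $q_\eps(t,x)$ as a ratio: the numerator $u_\eps(t,x) := \int_{\R} U_\eps(-t,x;0,y)q_0(y)dy$ solves the mollified stochastic heat equation $\partial_t u_\eps = \tfrac12\Delta u_\eps + \beta u_\eps\xi_\eps$ with $u_\eps(0,\cdot)=q_0$, and the denominator is $Z_\eps(t) = \int_{\R} u_\eps(t,x)dx$. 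The key point is that the numerator and denominator are strongly correlated, so one should not bound them separately; instead I would use the standard fact (recalled in Appendix~\ref{s.she}) that for the $1d$ stochastic heat equation, $u_\eps$ converges in $L^p$ to the solution $u$ driven by white noise, uniformly on compacts, and that the time-reversal symmetry gives $q_\eps(t,x) = u_\eps(t,x)/Z_\eps(t)$ with $Z_\eps(t) \to Z(t)$ in every $L^p$.

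For the moment bound, the strategy is to control $\bfE[|q_\eps(t,x)|^p]$ by writing $q_\eps(t,x)^p = u_\eps(t,x)^p Z_\eps(t)^{-p}$ and applying Cauchy--Schwarz (or H\"older) to decouple: $\bfE[u_\eps(t,x)^p Z_\eps(t)^{-p}] \leq \bfE[u_\eps(t,x)^{2p}]^{1/2}\,\bfE[Z_\eps(t)^{-2p}]^{1/2}$. The first factor is handled by the well-known moment estimate for the stochastic heat equation, $\bfE[u_\eps(t,x)^{2p}] \leq C (G_t\star q_0(x))^{2p}$ with $C$ independent of $\eps$ (this is where the Wick/Feynman--Kac bounds or Gronwall-type arguments on the Duhamel formula enter, using that $R_\eps(0) = \eps^{-1}R(0)$ does \emph{not} appear because of the Wick renormalization). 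The negative-moment bound $\sup_{\eps}\bfE[Z_\eps(t)^{-2p}] < \infty$ for $t$ bounded away from $0$ follows from the negative-moment estimates for the total mass of the stochastic heat equation; in $d=1$ these are classical and uniform in the mollification parameter. The same argument with $\eps=0$ gives the bound on $\bfE[|q(t,x)|^p]$.

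For the convergence, I would write
\[
q_\eps(t,x) - q(t,x) = \frac{u_\eps(t,x) - u(t,x)}{Z_\eps(t)} + u(t,x)\Big(\frac{1}{Z_\eps(t)} - \frac{1}{Z(t)}\Big) = \frac{u_\eps - u}{Z_\eps} + \frac{u(t,x)(Z(t)-Z_\eps(t))}{Z_\eps(t)Z(t)},
\]
and bound the $L^p$ norm of each term by H\"older, using: (a) $u_\eps(t,x)\to u(t,x)$ in $L^{p'}$ for every $p'$, uniformly in $x$ and on compact time sets (standard for the $1d$ SHE, via the Duhamel/chaos expansion or a direct Gronwall estimate on $\bfE[|u_\eps(t,x)-u(t,x)|^{p'}]$ that is $O(\eps^{\alpha})$ for some $\alpha>0$); (b) $Z_\eps(t)\to Z(t)$ in every $L^{p'}$ (integrate (a) in $x$, using the uniform $L^{p'}$ bound on $u_\eps(t,x)$ with the Gaussian envelope to justify passing to the limit under the integral); (c) the uniform negative-moment bounds on $Z_\eps(t)$ and $Z(t)$ from the previous paragraph, to control the $1/Z_\eps$ and $1/(Z_\eps Z)$ factors. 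The uniformity in $x$ and on compacts in $t$ is inherited from the corresponding uniformity for $u_\eps - u$ and the fact that the $Z_\eps$-estimates depend on $t$ only through a lower bound.

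The main obstacle I anticipate is the negative-moment control of the partition function $Z_\eps(t)$, uniformly in $\eps$. The positive moments and the convergence of $u_\eps$ are routine from the SHE literature cited in the appendix, but showing $\sup_{\eps\in(0,1)}\bfE[Z_\eps(t)^{-p}] < \infty$ (for $t$ in a compact subset of $(0,\infty)$) requires a genuine small-deviation estimate: one must rule out that the total mass $\int u_\eps(t,x)dx$ becomes atypically small. In $d=1$ this can be done, e.g., via the strict positivity / lower-bound results for the stochastic heat equation (Mueller's comparison principle and its quantitative refinements), or by a direct second-moment-method argument using that $\bfE[Z_\eps(t)] = 1$ and $\bfE[Z_\eps(t)^2]$ is bounded uniformly in $\eps$ — though the latter only gives boundedness of $\bfE[Z_\eps(t)^{-p}]$ after more work, so I expect to invoke the former. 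I would isolate this as a lemma (stated in or deferred to Appendix~\ref{s.she}) and cite the relevant results, since it is standard but technical, and then the rest of the proof is the bookkeeping described above.
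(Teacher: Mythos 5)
Your proposal is correct and follows essentially the same route as the paper: the time-reversal/ratio representation $q_\eps=u_{\eps,q_0}/\int u_{\eps,q_0}$, H\"older's inequality combining the Gaussian-envelope positive moment bounds for the stochastic heat equation with uniform-in-$\eps$ negative moments of the total mass (which the paper likewise isolates as a separate appendix lemma, Lemma~\ref{l.negativemm}), and the identical two-term decomposition of $q_\eps-q$ for the $L^p$ convergence. The only minor difference is in how the negative-moment input is obtained --- the paper reduces to flat initial data via Jensen's inequality and stationarity and then cites a known negative-moment bound, whereas you would cite positivity/small-deviation results directly --- which does not change the structure of the argument.
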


\begin{lemma}\label{l.conQ}
$Q_n$ is continuous on $(0,\infty)\times \R^n$. 
\end{lemma}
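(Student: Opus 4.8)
The plan is to establish continuity of $Q_n$ on $(0,\infty)\times\R^n$ by combining the $L^p$-convergence statement from \Cref{l.bdrhoeps} with a soft regularity argument for the mollified densities $Q_{n,\eps}$, and then passing to the limit. First I would observe that for fixed $\eps>0$, the environment $\xi_\eps$ is a genuine (colored) Gaussian field, and the quenched density $q_\eps(t,x)$ is built from the mild solution $U_\eps$ of the smooth SHE \eqref{e.sheeps}. Standard parabolic regularity for the stochastic heat equation with spatially smooth noise (reviewed in Appendix~\ref{s.she}) gives that $(t,x)\mapsto U_\eps(-t,x;0,y)$, and hence $q_\eps(t,x)$, is continuous in $(t,x)\in(0,\infty)\times\R$, together with moment bounds that are locally uniform in $(t,x)$; taking products and expectations, $Q_{n,\eps}$ is continuous on $(0,\infty)\times\R^n$. (Alternatively, one can read off continuity and moment bounds for $Q_{n,\eps}$ directly from the weak formulation \eqref{e.eqQeps} together with the moment bound \eqref{e.bdrhoeps} applied with $R_\eps$.)

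Next I would upgrade the pointwise $L^p$-convergence $q_\eps(t,x)\to q(t,x)$ from \Cref{l.bdrhoeps} to convergence of $Q_{n,\eps}$ to $Q_n$ that is \emph{uniform} on compact subsets of $(0,\infty)\times\R^n$. The key inputs are exactly the two halves of \Cref{l.bdrhoeps}: (a) the convergence $q_\eps(t,x)\to q(t,x)$ in $L^p$ is uniform for $x\in\R$ and $t$ in compact subsets of $(0,\infty)$, and (b) the uniform $L^p$-bound \eqref{e.bdrhoeps}, which controls $\bfE[|q_\eps(t,x)|^p]+\bfE[|q(t,x)|^p]$ by $C(G_t\star q_0(x))^p$. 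Writing the telescoping identity
\[
Q_{n,\eps}(t,\x_{1:n})-Q_n(t,\x_{1:n})=\sum_{i=1}^n \bfE\Big[\big(q_\eps(t,x_i)-q(t,x_i)\big)\prod_{j<i}q_\eps(t,x_j)\prod_{j>i}q(t,x_j)\Big],
\]
I would bound each term by Hölder's inequality with exponents $(n,\ldots,n)$: the factor involving the difference is controlled by the $L^n$-convergence in (a), while the remaining $n-1$ factors are controlled in $L^n$ by \eqref{e.bdrhoeps}. Since $G_t\star q_0$ is bounded on compact subsets of $(0,\infty)\times\R$ (here using $q_0\in C_c(\R)$ in case (i), or the explicit heat kernel bound in case (ii)), this shows $\sup_{(t,\x_{1:n})\in K}|Q_{n,\eps}-Q_n|\to 0$ for every compact $K\subset(0,\infty)\times\R^n$.

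Finally, continuity of $Q_n$ follows because it is the locally uniform limit of the continuous functions $Q_{n,\eps}$. The main obstacle is verifying the continuity and locally uniform moment bounds for the mollified objects $Q_{n,\eps}$ rigorously — i.e., checking that the mild solution of \eqref{e.sheeps} with smooth noise has the claimed joint continuity in $(t,x)$ and $L^p$-moments that are locally uniform in $(t,x)$ and, crucially, uniform in $\eps\in(0,1)$; this last uniformity is precisely what \eqref{e.bdrhoeps} supplies. If one prefers to avoid invoking SHE regularity theory, the alternative is to run a direct argument from the Feynman--Kac representation: $q_\eps(t,x)$ is a ratio whose numerator is $\E_y[\delta(w_t-x)M_\eps(t,w)]$, and one can establish continuity in $(t,x)$ by standard Kolmogorov-type estimates on the Gaussian multiplicative chaos $M_\eps$, again with $\eps$-uniform moment bounds coming from \eqref{e.bdrhoeps}. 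Either route is routine given the tools already assembled; the substance of the lemma is packaged into \Cref{l.bdrhoeps}, and \Cref{l.conQ} is essentially a corollary of it plus an interchange-of-limits argument.
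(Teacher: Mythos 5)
Your argument is correct, but it takes a different route from the paper. The paper does not pass through the mollified objects at all for this lemma: it proves directly that $(t,x)\mapsto q(t,x)$ is continuous as a map into $L^p(\Omega)$ (i.e.\ $\bfE[|q(t_n,x_n)-q(t,x)|^p]\to0$ whenever $(t_n,x_n)\to(t,x)$), by combining the joint $L^p$-continuity of the white-noise SHE solution $u_{q_0}$ recorded in \Cref{l.mmbd}~(iv) with the time-reversal identity of \Cref{l.timereversal} and the negative-moment bound of \Cref{l.negativemm}; continuity of $Q_n$ then follows from this, \eqref{e.bdrhoeps} and H\"older (the same telescoping-plus-H\"older algebra you use, but applied to $q(t_n,x_n)$ versus $q(t,x)$ rather than to $q_\eps$ versus $q$). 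Your route instead establishes continuity of $Q_{n,\eps}$ for fixed $\eps$ and upgrades \Cref{l.bdrhoeps} to locally uniform convergence $Q_{n,\eps}\to Q_n$ — note that this uniform convergence is exactly what the paper records as \Cref{c.conQ} — and then takes a locally uniform limit of continuous functions. This works, but it does not really save effort: the continuity of $Q_{n,\eps}$ that you flag as the main obstacle requires the same kind of input ($L^p$-joint continuity of $u_{\eps,q_0}$, time reversal, and the $\eps$-case of \Cref{l.negativemm}) as the paper's direct argument, and the appendix only states the $C((0,\infty)\times\R,L^p(\Omega))$ regularity for the white-noise solution $u_f$, so you would have to supply the (easier, standard) colored-noise analogue yourself. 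One small clarification: uniformity in $\eps$ is needed only for the moment bounds entering the uniform-convergence step (which \eqref{e.bdrhoeps} provides), not for the continuity of $Q_{n,\eps}$ itself, which is a fixed-$\eps$ statement.
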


The proof of Lemmas~\ref{l.bdrhoeps} and \ref{l.conQ} is given in Appendix~\ref{s.she}.

\begin{corollary}\label{c.conQ}
There exist $C=C(\beta,T)>0$ such that for all $ t\in(0,T],(x_1,\ldots,x_n)\in\R^n$ and $\eps\in(0,1)$,
\[
Q_{n,\eps}(t,x_1,\ldots,x_n)+Q_n(t,x_1,\ldots,x_n)\leq C\prod_{j=1}^n G_t\star q_0(x_j).
\]
In addition, $Q_{n,\eps}(t,x_1,\ldots,x_n)\to Q_n(t,x_1,\ldots,x_n)$ as $\eps\to0$, and the convergence is uniform for $(x_1,\ldots,x_n)\in\R^n$ and $t$ in compact subsets of $(0,\infty)$.
\end{corollary}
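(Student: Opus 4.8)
The plan is to deduce \Cref{c.conQ} directly from \Cref{l.bdrhoeps}, which controls $q_\eps(t,x)$ and $q(t,x)$ pointwise in $L^p$, together with \Cref{l.conQ}, which gives the continuity of $Q_n$. First I would prove the pointwise bound: since $Q_{n,\eps}(t,\x_{1:n})=\bfE[q_\eps(t,x_1)\cdots q_\eps(t,x_n)]$, I apply the generalized Hölder inequality with exponents all equal to $n$ to get $Q_{n,\eps}(t,\x_{1:n})\le\prod_{j=1}^n\bfE[|q_\eps(t,x_j)|^n]^{1/n}$, and then invoke \eqref{e.bdrhoeps} with $p=n$ to bound each factor by $C^{1/n}\,G_t\star q_0(x_j)$; taking the product gives the claimed bound $Q_{n,\eps}(t,\x_{1:n})\le C\prod_j G_t\star q_0(x_j)$ with a constant depending only on $\beta,T$ (and $n$), uniformly in $\eps\in(0,1)$. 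The same argument applied to $q$ in place of $q_\eps$ handles the $Q_n$ term, and adding the two yields the first assertion.

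Next I would establish the convergence $Q_{n,\eps}\to Q_n$. Writing the telescoping identity
\[
q_\eps(t,x_1)\cdots q_\eps(t,x_n)-q(t,x_1)\cdots q(t,x_n)
=\sum_{j=1}^n\Big(\prod_{i<j}q_\eps(t,x_i)\Big)\big(q_\eps(t,x_j)-q(t,x_j)\big)\Big(\prod_{i>j}q(t,x_i)\Big),
\]
I take expectations and apply Hölder (e.g.\ put $q_\eps(t,x_j)-q(t,x_j)$ in $L^n$ and each of the remaining $n-1$ factors in $L^n$). Each product factor is bounded in $L^n$ by $C\,G_t\star q_0(x_i)$ via \eqref{e.bdrhoeps}, and the difference factor tends to $0$ in $L^n$ by \Cref{l.bdrhoeps}, uniformly for $x_j\in\R$ and $t$ in compact subsets of $(0,\infty)$. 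Hence $|Q_{n,\eps}(t,\x_{1:n})-Q_n(t,\x_{1:n})|\le C\big(\max_j\sup_{x\in\R}\bfE[|q_\eps(t,x)-q(t,x)|^n]^{1/n}\big)\prod_{i}G_t\star q_0(x_i)$, and since $G_t\star q_0$ is bounded on $\R$ uniformly for $t$ in a compact subset of $(0,\infty)$ (as $q_0\in C_c$ or $q_0=\delta$ with $t$ bounded away from $0$), the right-hand side goes to $0$ uniformly in $\x_{1:n}$ and locally uniformly in $t$. This gives the stated uniform convergence.

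The continuity of $Q_n$ from \Cref{l.conQ} is not strictly needed for the two displayed conclusions of \Cref{c.conQ}, but it is convenient to record here since it makes the uniform-convergence statement meaningful as a statement about a genuine continuous limit; I would simply cite \Cref{l.conQ} for that. The main obstacle is really bookkeeping rather than depth: one must be careful that all the moment bounds and the $L^p$ convergence in \Cref{l.bdrhoeps} are uniform in the spatial variables and locally uniform in time, so that the Hölder estimates above propagate that uniformity to the $n$-point functions; the factor $\prod_j G_t\star q_0(x_j)$ must be controlled uniformly, which forces the restriction to compact time subsets of $(0,\infty)$ in the $q_0=\delta$ case. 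No genuinely new ideas beyond \Cref{l.bdrhoeps,l.conQ} and Hölder's inequality are required.
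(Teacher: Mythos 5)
Your proposal is correct and matches what the paper intends: Corollary~\ref{c.conQ} is stated without proof precisely because it follows from Lemma~\ref{l.bdrhoeps} by the generalized H\"older inequality (for the bound) and a telescoping decomposition plus H\"older (for the uniform convergence), exactly as you do. One cosmetic slip: in your displayed estimate for $|Q_{n,\eps}-Q_n|$ the product should run over $i\neq j$ (H\"older does not produce the extra factor $G_t\star q_0(x_j)$), but since you only use the uniform bound $G_t\star q_0\leq Ct^{-1/2}$ on compact time sets, the conclusion is unaffected.
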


Now we can finish the proof of Theorem~\ref{t.bbgky}.

\begin{proof}
We start from \eqref{e.eqQeps} and pass to the limit of $\eps\to0$ for each term. First, by Corollary~\ref{c.conQ}, we have
\[
\la f(T),Q_{n,\eps}(T)\ra\to \la f(T),Q_n(T)\ra, 
\]
and
\[
\int_0^T \la (\partial_t+ \tfrac12\Delta) f(t), Q_{n,\eps}(t)\ra dt\to \int_0^T \la (\partial_t+\tfrac12\Delta)f(t), Q_{n}(t)\ra dt.
\]
The rest of the $\eps-$dependent terms in \eqref{e.eqQeps} are treated in the same way, so we take $\int_0^T\la f_{0,\eps}(t),Q_{n,\eps}(t)\ra dt$ as an example: for any $t$,
\[
\begin{aligned}
\la f_{0,\eps}(t),Q_{n,\eps}(t)\ra=&\sum_{1\leq i<j\leq n}\int_{\R^n} f(t,\x_{1:n})R_\eps(x_i-x_j)Q_{n,\eps}(t,\x_{1:n}) d\x_{1:n}.
\end{aligned}
\]
It suffices to consider fixed $i,j$ from the summation. By the change of variable $x_i\mapsto x_i, x_j\mapsto x_i-\eps x_j$, the integral equals to 
\[
\begin{aligned}
\int_{\R^n}& f(t,x_1,\ldots,x_i,\ldots,x_i-\eps x_j,\ldots x_n)R(x_j)\\
&\times Q_{n,\eps}(t,x_1,\ldots,x_i,\ldots,x_i-\eps x_j,\ldots x_n) d\x_{1:n}.
\end{aligned}
\]
By Corollary~\ref{c.conQ}, we have
\[
Q_{n,\eps}(t,x_1,\ldots,x_i,\ldots,x_i-\eps x_j,\ldots x_n) \leq Ct^{-\frac{1}{2}}\prod_{\ell:\,\ell \neq j} G_t\star q_0(x_\ell),
\]
where we also used the elementary estimate $G_t\star q_0(x_i-\eps x_j) \leq Ct^{-\frac12}$, which clearly holds for the two cases of $q_0$ we considered in the paper: $q_0\in C_c(\R)$ or $q_0(x)=\delta(x)$. For fixed $t\in(0,T)$ and $(x_1,\ldots,x_n)\in\R^n$, by the continuity of $f$, Lemma~\ref{l.conQ} and Corollary~\ref{c.conQ}, we obtain
\[
\begin{aligned}
&f(t,x_1,\ldots,x_i,\ldots,x_i-\eps x_j,\ldots x_n)Q_{n,\eps}(t,x_1,\ldots,x_i,\ldots,x_i-\eps x_j,\ldots x_n)\\
&\to f(t,x_1,\ldots,x_i,\ldots,x_i,\ldots x_n)Q_{n}(t,x_1,\ldots,x_i,\ldots,x_i,\ldots x_n), \quad \mbox{ as } \eps\to0.
\end{aligned}
\]
Note that $\int R=1$, we can apply dominated convergence theorem to conclude that 
\[
\int_0^T\la f_{0,\eps}(t),Q_{n,\eps}(t)\ra dt\to \int_0^T\la f_{0,\delta(\cdot)}(t),Q_{n}(t)\ra dt, \quad\mbox{ as }\eps\to0.
\]
Here we recall from \eqref{e.defFR} that
\[
f_{0,\delta(\cdot)}(t,\x_{1:n})=f(t,\x_{1:n})\sum_{1\leq i<j\leq n}\delta(x_i-x_j).
\]
The proof is complete.
\end{proof}

\subsection{Proof of Corollary~\ref{c.generator}}
 
The proof of the cases $R(\cdot)\in C_c^\infty(\R^d)$ and $R(\cdot)=\delta(\cdot)$ are similar, and we only deal with the latter. Fix $n=1$, $q_0\in C_c(\R^d)$ and $f\in C_b^2(\R^{nd})$, by Theorem~\ref{t.bbgky}, we have 
\[
\la f, Q_1(T)\ra=\la f, q_0\ra+\int_0^T \la \tfrac12\Delta f, Q_1(t)\ra dt+\beta^2\sum_{k=0}^2\int_0^T\la f_{k,\delta(\cdot)},Q_{1+k}(t) \ra dt.
\]
Recall that $\rmF_f(\mu_T)=\la f, \mu_T\ra$, so we have 
\[
\begin{aligned}
\frac{\bfE[\rmF_f(\mu_T)]-\rmF_f(\mu_0)}{T}=& \frac{\la f, Q_1(T)\ra-\la f, q_0\ra}{T}\\
=&\frac{1}{T}\int_0^T \la \tfrac12\Delta f, Q_1(t)\ra dt+\beta^2\sum_{k=0}^2 \frac{1}{T}\int_0^T\la f_{k,\delta(\cdot)},Q_{1+k}(t) \ra dt.
\end{aligned}
\]
By definition $f_{0,\delta(\cdot)}=0$ when $n=1$. For $k=1$, we have 
\[
\begin{aligned}
\frac{1}{T}\int_0^T\la f_{k,\delta(\cdot)},Q_{1+k}(t) \ra dt=&-\frac{1}{T}\int_0^T \int_{\R^2} f(x_1)\delta(x_1-x_2) Q_2(t,x_1,x_2)dx_1dx_2dt\\
=&-\frac{1}{T}\int_0^T \int_\R f(x_1)Q_2(t,x_1,x_1) dx_1dt\\
=&-\int_0^1\int_{\R} f(x_1)Q_2(Tt,x_1,x_1)dx_1dt.
\end{aligned}
\]
Similarly, when $k=2$, we have 
\[
\begin{aligned}
\frac{1}{T}\int_0^T\la f_{k,\delta(\cdot)},Q_{1+k}(t) \ra dt=&\frac{1}{T}\int_0^T \int_{\R^3} f(x_1)\delta(x_2-x_3)Q_3(t,x_1,x_2,x_3) dx_1dx_2dx_3dt\\
=&\frac{1}{T}\int_0^T \int_{\R^2} f(x_1)Q_3(t,x_1,x_2,x_2) dx_1dx_2dt\\
=&\int_0^1 \int_{\R^2} f(x_1)Q_3(Tt,x_1,x_2,x_2)dx_1dx_2dt.
\end{aligned}
\]
By applying Corollary~\ref{c.conQ} and Lemma~\ref{l.tzero} below, we have
\[
\begin{aligned}
\frac{\bfE[\rmF_f(\mu_T)]-\rmF_f(\mu_0)}{T}\to \la \tfrac12\Delta f,q_0\ra&-\beta^2\int_{\R}f(x_1)q_0(x_1)^2dx_1\\
&+\beta^2\int_{\R^2} f(x_1)q_0(x_1)q_0(x_2)^2dx_1dx_2  
\end{aligned}
\]
as $T\to0$. The r.h.s.\ equals to 
\[
\la \tfrac12\Delta f,q_0\ra+ \beta^2\la f_{1,\delta(\cdot)},q_0^{\otimes 2}\ra+\beta^2\la f_{2,\delta(\cdot)},q_0^{\otimes 3}\ra=\la \tfrac12\Delta f,q_0\ra+\beta^2\la f, \mathcal{T} q_0\ra,
\] 
which completes the proof of \eqref{e.generatorcolor}.

\begin{lemma}\label{l.tzero}
Assume $q_0\in C_c(\R^d)$. For any $n\geq1$ and $(x_1,\ldots,x_n)\in\R^{nd}$, as $t\to0$,
\[
\begin{aligned}
Q_n(t,x_1,\ldots,x_n)&\to \prod_{j=1}^n q_0(x_j).
\end{aligned}
\]
\end{lemma}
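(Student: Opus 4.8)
\textbf{Proof proposal for Lemma~\ref{l.tzero}.}

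The plan is to establish the convergence $Q_n(t,\x_{1:n}) \to \prod_{j=1}^n q_0(x_j)$ as $t \to 0$ by exploiting the probabilistic representation $Q_n(t,\x_{1:n}) = \bfE[q(t,x_1)\cdots q(t,x_n)]$ together with the pointwise bound \eqref{e.bdrhoeps} from Lemma~\ref{l.bdrhoeps}. First I would reduce to the single-point statement: it suffices to show that $q(t,x) \to q_0(x)$ in a suitable sense as $t \to 0$, and then combine the $n$ factors. Concretely, recalling from Section~\ref{s.poshe} that $q(t,x) = \frac{\int U(-t,x;0,y) q_0(y)\,dy}{\int U(-t,\tilde x; 0,y) q_0(y)\,dy\,d\tilde x}$, and that the numerator solves the stochastic heat equation \eqref{e.shephi} with initial data $q_0$, one expects the numerator to converge (say in $L^2(\mathbf{P})$, locally uniformly in $x$) to $q_0(x)$ as $t \to 0$ since the SHE semigroup is a small perturbation of the heat semigroup on short time scales and $G_t \star q_0 \to q_0$ for $q_0 \in C_c(\R^d)$. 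Likewise the denominator $Z(t) = \int_{\R^d} U(-t,\tilde x; 0,y) q_0(y)\,dy\,d\tilde x \to \int q_0 = 1$ in $L^2(\mathbf{P})$. Hence $q(t,x) \to q_0(x)$ in probability, uniformly on compacts.

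Next I would upgrade this to convergence of the product of moments. The estimate \eqref{e.bdrhoeps} gives $\bfE[|q(t,x)|^p] \leq C (G_t \star q_0(x))^p$ with $C$ uniform for $t \in (0,T]$; since $q_0 \in C_c(\R^d)$ we have $G_t \star q_0(x) \leq \|q_0\|_{L^\infty}$ (or more simply $G_t\star q_0$ is bounded uniformly in $t \in (0,1]$ and $x$ in a fixed neighborhood of $\mathrm{supp}\, q_0$, and decays rapidly outside). Thus the family $\{q(t,x_1)\cdots q(t,x_n)\}_{0 < t \leq 1}$ is bounded in $L^2(\mathbf{P})$ for each fixed $\x_{1:n}$, hence uniformly integrable. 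Combined with the convergence in probability $q(t,x_j) \to q_0(x_j)$, Vitali's convergence theorem (or the uniform integrability criterion for convergence of expectations) yields
\[
Q_n(t,\x_{1:n}) = \bfE\Big[\prod_{j=1}^n q(t,x_j)\Big] \to \bfE\Big[\prod_{j=1}^n q_0(x_j)\Big] = \prod_{j=1}^n q_0(x_j),
\]
the last equality because $q_0$ is deterministic.

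The main obstacle I anticipate is the short-time convergence of the SHE flow, namely showing $\int_{\R^d} U(-t,x;0,y) q_0(y)\,dy \to q_0(x)$ as $t \to 0$ in $L^2(\mathbf{P})$ locally uniformly in $x$; this is where the regularity $q_0 \in C_c$ is used, and it should follow from the Duhamel (mild) formulation $U q_0 = G_t \star q_0 + \beta \int_0^t G_{t-s}\star(U q_0\, \xi_\phi)(s)\,ds$ by estimating the second moment of the stochastic integral term via Itô isometry and a Gronwall argument, which gives that this term is $O(\sqrt{t})$ in $L^2(\mathbf{P})$ uniformly in $x$ (using boundedness of $R$ and of $G_t\star q_0$), while the deterministic term $G_t \star q_0 \to q_0$ by standard mollification. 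Alternatively, and perhaps more cleanly, one can invoke Lemma~\ref{l.bdrhoeps} and its proof (in Appendix~\ref{s.she}) where such short-time SHE estimates are presumably already established, so that this step can be cited rather than redone. Once the single-point short-time limit is in hand, the remaining steps — passing from $q$ to $Q_n$ via uniform integrability — are routine.
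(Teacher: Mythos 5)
Your proposal is correct and follows essentially the same route as the paper: reduce to the short-time convergence of $q(t,x)=u_{q_0}(t,x)/\int u_{q_0}(t,\tilde x)d\tilde x$ using the appendix SHE estimates (short-time continuity of $u_{q_0}$, control of the total mass/its negative moments), and then pass to the $n$-point expectation using the uniform moment bound \eqref{e.bdrhoeps}. The paper phrases the last step as $L^p$ convergence of each factor plus H\"older rather than convergence in probability plus Vitali, but this is only a cosmetic difference.
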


The proof of Lemma~\ref{l.tzero} is given in Appendix~\ref{s.she}.

\section{Quantitative central limit theorem: proofs of Theorems~\ref{t.qclt} and \ref{t.msd}}
\label{s.qclt}

In this section, we consider the high dimensions $d\geq3$ and a  high temperature regime with $\beta\ll1$. The goal is to prove Theorems~\ref{t.qclt} and \ref{t.msd}. With a change of variable, \eqref{e.wdbd} and \eqref{e.msdbd} are equivalent with, for any $h\in \mathrm{Lip}(1)$,  
\begin{equation}\label{e.201}
\left|\int_{\R^d} h(\eps x) Q_1(\tfrac{1}{\eps^2},x)dx-\int_{\R^d} h(x)G_1(x)dx \right| \leq C\Big( \eps|\log \eps|\1_{d=3}+\eps\1_{d\geq 4} \Big), 
\end{equation}
and
\begin{equation}\label{e.202}
\left| \int_{\R^d} |\eps x|^2 Q_1(\tfrac{1}{\eps^2},x)dx - d\right| \leq C \Big(\eps\1_{d=3}+\eps^2|\log \eps|\1_{d=4}+\eps^2\1_{d\geq5}\Big).
\end{equation}
To unify the notation, we view~\eqref{e.202} as a special case of~\eqref{e.201} with $h(x) = |x|^2$ even though this choice of $h$ is not an element of $\mathrm{Lip}(1)$. 
Note that, although the function $h$ here is not necessarily bounded, it grows at most polynomially at infinity. Thus, by Corollary~\ref{c.conQ}, it is easy to see that the two integrals in \eqref{e.201} are both well-defined.  Our proof below is based on selecting appropriate test functions in the equation satisfied by $Q_1$ in order to quantify the cancellation between the $Q_2$ and $Q_3$ terms.

Throughout the section, we assume that $q_0(\cdot)=\delta(\cdot)$; that is, the starting point of the polymer path is at the origin. Recall that in high dimensions we assumed the random environment is smooth in the spatial variable and $R(\cdot)\in C_c^\infty(\R^d)$ is the spatial covariance function. 

\subsection{Error form}
The first step is to derive an exact error expression in \eqref{e.201} using the hierarchical PDE system. We define an auxiliary test function as follows: for any $\eps>0$ and a function $h$, let $f_\eps(t,x)$ be the solution to the backward heat equation
\begin{equation}\label{e.backheat}
\begin{aligned}
&\partial_t f_\eps(t,x)+\tfrac12\Delta f_\eps(t,x)=0,\quad\quad  t<\tfrac{1}{\eps^2},x\in\R^d,\\
&f_\eps(\tfrac{1}{\eps^2},x)=h(\eps x).
\end{aligned}
\end{equation}
Then we have
\begin{lemma}\label{l.errde}
For a continuous function $h$ with at most polynomial growth at infinity, we have 
\begin{equation}\label{e.203}
\begin{aligned}
\mathcal{E}_\eps(h)&:=\int_{\R^d} h(\eps x)  Q_1(\tfrac{1}{\eps^2},x) dx- \int_{\R^d} h(x) G_1(x)dx\\
&=\beta^2\int_0^{\eps^{-2}}\int_{\R^{3d}}[f_\eps(t,x)-f_\eps(t,y)]R(y-z)Q_3(t,x,y,z)dxdydzdt.
\end{aligned}
\end{equation}
\end{lemma}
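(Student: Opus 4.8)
The plan is to apply \Cref{t.bbgky} with $n=1$, terminal time $T=\eps^{-2}$, and the test function $f=f_\eps$ solving \eqref{e.backheat}, and then to reorganize the resulting $Q_2$- and $Q_3$-terms into the single $Q_3$-expression of \eqref{e.203}. Since $f_\eps$ satisfies $(\partial_t+\tfrac12\Delta)f_\eps=0$ on $(0,\eps^{-2})$, the middle term of \eqref{e.eqP} drops out; and since the sum defining $f_{0,R}$ in \eqref{e.defFR} is empty for $n=1$, the $k=0$ term drops out as well. For the initial term, $q_0=\delta$ gives $\la f_\eps(0),q_0\ra=f_\eps(0,0)$; writing $f_\eps(0,x)=\int_{\R^d}G_{\eps^{-2}}(x-y)h(\eps y)\,dy$ and substituting $y=z/\eps$, the identity $\eps^{-d}G_{\eps^{-2}}(z/\eps)=G_1(z)$ yields $f_\eps(0,0)=\int_{\R^d}h(z)G_1(z)\,dz$. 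Hence \eqref{e.eqP} reads
\[
\int_{\R^d}h(\eps x)Q_1(\eps^{-2},x)\,dx-\int_{\R^d}h(z)G_1(z)\,dz=\beta^2\int_0^{\eps^{-2}}\big(\la f_{1,R}(t),Q_2(t)\ra+\la f_{2,R}(t),Q_3(t)\ra\big)\,dt,
\]
so the left-hand side is exactly $\mathcal{E}_\eps(h)$ and it remains to identify the right-hand side with the claimed integral.

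Specializing \eqref{e.defFR} to $n=1$ gives $f_{1,R}(t,x_1,x_2)=-f_\eps(t,x_1)R(x_1-x_2)$ and $f_{2,R}(t,x_1,x_2,x_3)=f_\eps(t,x_1)R(x_2-x_3)$, so (suppressing integration variables) $\la f_{1,R}(t),Q_2(t)\ra+\la f_{2,R}(t),Q_3(t)\ra$ equals $\int f_\eps(t,x_1)R(x_2-x_3)Q_3(t,x_1,x_2,x_3)-\int f_\eps(t,x_1)R(x_1-x_2)Q_2(t,x_1,x_2)$. In the second integral I would use the marginalization identity $\int_{\R^d}Q_3(t,x_1,x_2,x_3)\,dx_3=Q_2(t,x_1,x_2)$ — a consequence of Tonelli and the fact that $q(t,\cdot)$ is a.s.\ a probability density — to replace $Q_2$ by $Q_3$. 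Then, using the permutation symmetry of $Q_3(t,\cdot)$ (immediate from $Q_3=\bfE[q\,q\,q]$), I relabel $(x_1,x_2,x_3)\mapsto(x,y,z)$ in the first integral and $(x_1,x_2,x_3)\mapsto(y,z,x)$ in the second; both then become integrals of $\cdot\,R(y-z)Q_3(t,x,y,z)$ over $\R^{3d}$, and their difference is precisely $[f_\eps(t,x)-f_\eps(t,y)]R(y-z)Q_3(t,x,y,z)$, giving \eqref{e.203}.

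The one technical point is that \Cref{t.bbgky} requires $f\in C_b^{1,2}$, whereas for $h\in\mathrm{Lip}(1)$ (and for $h(x)=|x|^2$) the terminal data $h(\eps\cdot)$, hence $f_\eps$, grows at infinity. I would handle this by approximation: choose $h_m\in C_c^\infty(\R^d)$ with $h_m\to h$ locally uniformly and with polynomial growth bounds uniform in $m$, apply the identity above to each $h_m$ (for which $f_{\eps,m}\in C_b^{1,2}$), and pass to the limit $m\to\infty$. On the left, $Q_1(\eps^{-2},\cdot)\le CG_{\eps^{-2}}$ by \Cref{c.conQ} (with $q_0=\delta$) provides the Gaussian tail needed to integrate the polynomially growing $h$; on the right, bounding $|R|\le\|R\|_\infty$ and using $\int_{\R^d}Q_3(t,x,y,z)\,dz=Q_2(t,x,y)\le CG_t(x)G_t(y)$ together with the (uniform in $m$) polynomial growth of $f_{\eps,m}$ shows the $dx\,dy$-integral is $O(1+t^{k/2})$, hence integrable in $t$ over $(0,\eps^{-2})$, so dominated convergence applies. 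The algebraic identity is an immediate consequence of \Cref{t.bbgky}, marginalization, and symmetry; the approximation is routine, so there is no serious obstacle here — the substance of the section lies in later exploiting \eqref{e.203} via the cancellation in $f_\eps(t,x)-f_\eps(t,y)$.
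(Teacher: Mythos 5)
Your proposal is correct and follows essentially the same route as the paper: apply Theorem~\ref{t.bbgky} with $n=1$, $T=\eps^{-2}$ and the caloric test function $f_\eps$ (killing the $(\partial_t+\tfrac12\Delta)$ and $f_{0,R}$ terms), evaluate the initial term as $f_\eps(0,0)=\int h\,G_1$ by heat-kernel scaling, merge the $Q_2$ and $Q_3$ terms via the marginalization $Q_2(t,x,y)=\int Q_3(t,x,y,z)\,dz$ and the symmetry of $Q_3$, and then extend from nice $h$ to polynomially growing $h$ by approximation using the Gaussian bounds of Corollary~\ref{c.conQ}. The only cosmetic difference is that the paper invokes the evenness of $R$ where you instead relabel variables using the symmetry of $Q_3$; both are valid.
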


\begin{proof}
We first assume in addition that $h\in C_b^2(\R^d)$. As $f_\eps$ solves the backward heat equation, it holds that $f_\eps\in C_b^{1,2}([0,\eps^{-2}]\times\R^d)$. In the hierarchical PDE system \eqref{e.eqP}, we take $n=1,T=\eps^{-2}$, and the test function to be $f_\eps$ to obtain
\begin{equation}\label{e.131}
\begin{split}
\int_{\R^d} f_\eps(\tfrac{1}{\eps^2},x)&Q_1(\tfrac{1}{\eps^2},x)dx=\int_{\R^d} f_\eps(0,x)Q_1(0,x)dx\\
&-\beta^2\int_0^{\eps^{-2}}\int_{\R^{2d}}f_\eps(t,x)R(x-y)Q_2(t,x,y)dxdydt\\
&+\beta^2\int_0^{\eps^{-2}}\int_{\R^{3d}}f_\eps(t,x)R(y-z)Q_3(t,x,y,z)dxdydzdt.
\end{split}
\end{equation}
As $Q_1(0,x)=q_0(x)=\delta(x)$, the first term on the r.h.s.\ of \eqref{e.131} is
\begin{equation}\label{e.204}
\begin{aligned}
\int_{\R^d}f_\eps(0,x)Q_1(0,x)dx=f_\eps(0,0)=\int_{\R^{d}}h(\eps x)G_{\eps^{-2}}(x)dx=\int_{\R^{2d}} h(x)G_1(x)dx,
\end{aligned}
\end{equation}
where the last step is through a change of variable and using the scaling property of the heat kernel. 

By definition, $Q_n(t,x_1,\ldots,x_n)$ is symmetric in the $x-$variables and 
\[
	Q_2(t,x,y)=\int_{\R^d} Q_3(t,x,y,z)dz.
\]
Thus, \eqref{e.131} can be rewritten as 
\[
	\begin{aligned}
		\mathcal{E}_\eps(h)
		=\beta^2\int_0^{\eps^{-2}}\int_{\R^{3d}}[f_\eps(t,x)-f_\eps(t,y)]R(y-z)Q_3(t,x,y,z)dxdydzdt.
	\end{aligned}
\]
where we also used the fact that $R(\cdot)$ is even. Through an approximation and the bound on $Q_n$ given in Corollary~\ref{c.conQ}, the above identity extends to the case of $h$ having at most polynomial growth at infinity, which completes the proof.
\end{proof}

\begin{remark}
For the case of $q_0(\cdot)\in C_c(\R^d)$, a similar error decomposition as \eqref{e.203} can be derived. The only change to make in the proof is in \eqref{e.204}, where an extra error term comes out of the weak convergence of $\tfrac{1}{\eps^d} q_0(\tfrac{x}{\eps})\to \delta(x)$.
\end{remark}

\subsection{Estimating $\mathcal{E}_\eps(h)$}

The proof of Theorems~\ref{t.qclt} and \ref{t.msd} reduces to the estimate of $\mathcal{E}_\eps(h)$ for $h\in \mathrm{Lip}(1)$ and $h(x)=|x|^2$ respectively. By using a probabilistic representation, the following bounds on $Q_n$ hold in the high temperature regime in $d\geq3$:
\begin{lemma}\label{l.bdQn}
For any $d\geq3$ and $n\geq1$, there exists constants $\beta_0(d,n,R)>0$ and $C(d,n,R,\beta)$ such that if $\beta<\beta_0(d,n,R)$, we have 
\[
Q_n(t,x_1,\ldots,x_n)\leq C \prod_{j=1}^n G_t(x_j), \quad\quad \mbox{ for all } t>0,x_1,\ldots,x_n\in\R^d.
\]
\end{lemma}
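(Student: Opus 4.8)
The plan is to combine the time-reversal representation of $Q_n$ with two inputs from the weak-disorder theory in $d\ge3$: moment bounds for the stochastic heat equation and negative-moment bounds for its total mass. Since here $q_0(\cdot)=\delta(\cdot)$, the identity \eqref{e.numerator} reads
\[
Q_n(t,x_1,\ldots,x_n)=\bfE\Big[\frac{u(t,x_1)\cdots u(t,x_n)}{Z_t^{\,n}}\Big],\qquad Z_t:=\int_{\R^d}u(t,y)\,dy,
\]
where $u$ solves $\partial_t u=\tfrac12\Delta u+\beta u\eta_\phi$ with $u(0,\cdot)=\delta_0$ (so $u>0$ a.s.) and $Z_t$ is a strictly positive mean-one martingale. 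Applying Cauchy--Schwarz under $\bfE$,
\[
Q_n(t,\x_{1:n})\le \Big(\bfE\big[\textstyle\prod_{i=1}^n u(t,x_i)^2\big]\Big)^{1/2}\Big(\bfE\big[Z_t^{-2n}\big]\Big)^{1/2},
\]
so it suffices to bound the first factor by $C\prod_i G_t(x_i)$ and the second by a constant, both uniformly in $t>0$.

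For the first factor, $\bfE\big[\prod_{i=1}^n u(t,x_i)^2\big]$ is the $2n$-point function $\tilde Q_{2n}$ of \eqref{e.tildeQ} evaluated at the doubled configuration $(x_1,x_1,\ldots,x_n,x_n)$. I would use the Feynman--Kac formula for $\tilde Q_{2n}$, which represents it as $\prod_{j}G_t(y_j)$ times the expectation over $2n$ independent Brownian bridges from $0$ of $\exp\!\big(\beta^2\sum_{i<j}\int_0^t R(B^i_s-B^j_s)\,ds\big)$. Splitting the exponential over pairs by Hölder, each difference $B^i-B^j$ is, up to a time change, a bridge-type path; since Brownian motion is transient in $d\ge3$ — so that $\E_x\!\int_0^\infty R(B_s)\,ds=(R\star g)(x)$ is bounded, $g$ the $(\tfrac12\Delta)$-Green's function — Khasminskii's lemma bounds this expectation by a constant depending only on $d,n,R,\beta$ once $\beta<\beta_0(d,n,R)$; coincidences among the $y_j$ are harmless because $R$ is bounded. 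Evaluating the Gaussian prefactor at the doubled configuration gives $\bfE\big[\prod_i u(t,x_i)^2\big]\le C\prod_i G_t(x_i)^2$, uniformly in $t$. This is the standard $L^{2n}$ moment bound for the mollified SHE in the high-temperature regime; see the monograph \cite{davar} and the references therein.

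For the second factor, recall that for $d\ge3$ and small $\beta$ the polymer is in the weak-disorder regime, so $Z_t$ is $L^2$-bounded, hence uniformly integrable, and converges a.s.\ and in $L^1$ to a limit $Z_\infty>0$ with $Z_t=\bfE[Z_\infty\mid\mathcal{F}_t]$. Convexity of $z\mapsto z^{-2n}$ and conditional Jensen then give $Z_t^{-2n}\le\bfE[Z_\infty^{-2n}\mid\mathcal{F}_t]$, so $\sup_{t\ge0}\bfE[Z_t^{-2n}]\le\bfE[Z_\infty^{-2n}]$, and it remains only to know $\bfE[Z_\infty^{-2n}]<\infty$ for $\beta$ small. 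This is a classical fact of the weak-disorder theory; heuristically, $\log Z_t$ is a martingale whose bracket has an a.s.\ finite limit in $d\ge3$ and is $O(\beta^2)$ on average, so $\log Z_\infty$ concentrates near $0$ as $\beta\to0$ and $Z_\infty^{-1}$ then has all moments once $\beta<\beta_0(d,n,R)$. Combining the two bounds yields $Q_n(t,\x_{1:n})\le C\prod_{i=1}^n G_t(x_i)$ with $C=C(d,n,R,\beta)$, as claimed.

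I expect the main obstacle to be the negative-moment estimate $\bfE[Z_\infty^{-2n}]<\infty$: the Jensen reduction of $\sup_t\bfE[Z_t^{-2n}]$ to it is clean, but finiteness of negative moments of the limiting partition function is the one genuinely nontrivial ingredient, and one must verify that the threshold $\beta_0$ below which it — together with the $L^{2n}$ moment bound of the previous step — holds can be chosen to depend only on $d$, $n$ and $R$, which is precisely why the statement permits $\beta_0$ to shrink as $n$ grows.
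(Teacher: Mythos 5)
Your proposal is correct in outline and shares the paper's skeleton: represent $q(t,\cdot)$ through the stochastic heat equation via \Cref{l.timereversal}, split off the normalizing mass by Cauchy--Schwarz/H\"older, bound the positive-moment factor by the replica/Feynman--Kac formula with a Khasminskii-type estimate for the bridge exponential (this is exactly the input the paper imports as \cite[Lemma A.2]{kpz1}, and your transience argument is the standard proof of it), and control a uniform-in-time negative moment of the mass for $\beta$ small. Where you genuinely diverge is in that last factor. The paper avoids any martingale-limit considerations: by the time-reversal identity in law \eqref{e.obs} and the Jensen step \eqref{e.c6221}, $\bfE[Z_t^{-p}]$ is bounded by $\bfE[u_{\1}(t,\cdot)^{-p}]$ for the flat-initial-data solution, and then the uniform-in-time negative moment bound of \cite[Proposition 2.3]{kpz1} finishes it. You instead use that $Z_t$ is a uniformly integrable martingale in weak disorder, write $Z_t=\bfE[Z_\infty\mid\F_t]$, and apply conditional Jensen to reduce to $\bfE[Z_\infty^{-2n}]<\infty$. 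That reduction is valid, but be aware that it does not make the problem easier: you cannot recover $\bfE[Z_\infty^{-2n}]<\infty$ from the uniform-in-$t$ bounds via Fatou without being circular in your scheme, so you need an independent left-tail/concentration estimate for $Z_\infty$, and the heuristic you give (finite bracket of $\log Z_t$, concentration as $\beta\to0$) is not a proof of all negative moments. In this continuum mollified setting the rigorous statement available in the literature is precisely the uniform-in-time negative-moment bound the paper cites, so the martingale detour buys nothing over the paper's two-line reduction; with that external input granted (as the paper also grants it), your argument closes and yields the same conclusion with $\beta_0=\beta_0(d,n,R)$.
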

The proof of Lemma~\ref{l.bdQn} is in Appendix~\ref{s.lem}.

Before undertaking the proofs of \Cref{t.qclt,t.msd}, we provide a heuristic argument that shows how the diffusive behavior of the polymer endpoint behavior follows from the convergence of the error $\mathcal{E}_\eps(h)$ to zero.  
Recall that
\[
\mathcal{E}_\eps(h)=\beta^2\int_0^{\eps^{-2}}\int_{\R^{3d}}[f_\eps(t,x)-f_\eps(t,y)]R(y-z)Q_3(t,x,y,z)dxdydzdt.
\]
Consider the simple case of $h\in C_b(\R^d)$, so $|f_\eps|\leq \sup_x|h(x)|$. The key point here is that, with the assumption of $\beta\ll1$ and $d\geq3$,
\[
\int_{\R^{3d}}R(y-z)Q_3(t,x,y,z)dxdydz=\int_{\R^{2d}}R(y-z) Q_2(t,y,z)dydz \leq C t^{-d/2}
\]
and, hence, is integrable for $t\in [1,\infty)$.  This is ultimately related to the fast decay of the heat kernel in high dimensions $d\geq3$, with the smallness of $\beta$ ensuring that the effect of the random environment is ``summable'' in the limit (hidden in the proof of Lemma~\ref{l.bdQn}). Therefore, the main contribution to $\mathcal{E}_\eps(h)$ actually comes from the time integration in a \emph{microscopically} large domain $[0,M]$ for $1 \ll M \ll \eps^{-2}$; that is
\[
\mathcal{E}_\eps(h)\approx \beta^2\int_0^{M}\int_{\R^{3d}}[f_\eps(t,x)-f_\eps(t,y)]R(y-z)Q_3(t,x,y,z)dxdydzdt. 
\]
On the other hand, when $t \leq M \ll \eps^{-2}$, the $f$ terms cancel for the following reason: for any fixed $(t,x)$, it is straightforward to check that
\[
f_\eps(t,x)=\int_{\R^d} h(\eps z) G_{\eps^{-2}-t}(x-z)dz=\int_{\R^d} h(z) G_{1-\eps^2t}(\eps x-z)dz\to \int_{\R^d} h(z)G_1(z)dz,
\]
which is independent of $x$. Thus, by the dominated convergence theorem, we obtain that $\mathcal{E}_\eps(h)\to0$. The proofs of Theorems~\ref{t.qclt} and \ref{t.msd} then rely on quantifying this argument, which we do now.

In the proofs below, $C$ is a constant independent of $\eps$ which may change from line to line.


\begin{proof}[Proof of Theorem~\ref{t.qclt}]
Fix $h\in \mathrm{Lip}(1)$, we have 
\[
|f_\eps(t,x)-f_\eps(t,y)|\leq  \int_{\R^d}G_{\eps^{-2}-t}(z) |h(\eps(x-z))-h(\eps(y-z))| dz \leq \eps \,|x-y|, 
\]
which implies
\begin{equation}\label{e.bdEeps}
\begin{aligned}
|\mathcal{E}_\eps(h)| \leq\, &\beta^2\eps\int_0^{\eps^{-2}}\int_{\R^{3d}} |x-y|R(y-z)Q_3(t,x,y,z)dxdydzdt\\
\leq\, & C\eps\int_0^{\eps^{-2}}\int_{\R^{3d}} |x-y| R(y-z) G_t(x)G_t(y)G_t(z)dxdydzdt.
\end{aligned}
\end{equation}
While the above integral can be estimated directly (as in the proof of Theorem~\ref{t.msd} below), we present a simple probabilistic argument here. Let $B^1,B^2,B^3$ be three independent Brownian motions, then the integral can be written as
\[
\int_{\R^{3d}} |x-y| R(y-z) G_t(x)G_t(y)G_t(z)dxdydz=\E[|B^2_t-B^1_t|\cdot R(B^2_t-B^3_t)].
\]
Since $B^2-B^1$ and $B^2-B^3$ are correlated Brownian motions with variance $2$ and covariance $1$, we can rewrite the expectation as
\[
\E[|B^2_t-B^1_t|\cdot R(B^2_t-B^3_t)]
	=\E\Big[\Big|\sqrt{\tfrac{1}{2}}W^1_t-\sqrt{\tfrac32}W^2_t\Big| \cdot R(\sqrt{2}W^1_t)\Big],
\]
with $W^1,W^2$ independent Brownian motions. Since $R\in C_c^\infty$, we estimate the above expectation by 
\[
	\E\Big[\Big|\sqrt{\tfrac{1}{2}}W^1_t-\sqrt{\tfrac32}W^2_t\Big| \cdot R(\sqrt{2}W^1_t)\Big]
		\leq C  \E\Big[\Big|\sqrt{\tfrac12}W^1_t-\sqrt{\tfrac32}W^2_t\Big|\cdot \1_{\{|W^1_t|\leq M\}}\Big]
\]
for some constant $M>0$. For $t\leq 1$, we have the obvious bound 
\[
\E\Big[\Big|\sqrt{\tfrac12}W^1_t-\sqrt{\tfrac32}W^2_t\Big|\cdot\1_{\{|W^1_t|\leq M\}}\Big]\leq C.
\]
For $t>1$, by first averaging $W^2$, we have 
\[
\E\Big[\Big|\sqrt{\tfrac{1}{2}}W^1_t-\sqrt{\tfrac{3}{2}}W^2_t\Big|\cdot \1_{\{|W^1_t|\leq M\}}\Big] \leq C\sqrt{t}\,\Pb[|W^1_t|\leq M] \leq C t^{-\frac{d-1}{2}}.
\]
Combining the two cases and plugging into \eqref{e.bdEeps},  we derive
\[
|\mathcal{E}_\eps(h)|
	\leq C\eps\, \Big(1+\int_1^{\eps^{-2}}t^{-\frac{d-1}{2}}dt\Big)
	\leq C\Big(\eps\, |\log \eps|\1_{d=3} +\eps\1_{d\geq4}\Big).
\] 
The proof is complete.
\end{proof}

\begin{proof}[Proof of Theorem~\ref{t.msd}]
Let $h(x)=|x|^2$, then 
\[
f_\eps(t,x)=\int_{\R^d} G_{\eps^{-2}-t}(y)|\eps (x-y)|^2 dy=\eps^2|x|^2+(1-\eps^2 t)d.
\]
Applying Lemmas~\ref{l.errde} and \ref{l.bdQn}, we have 
\begin{equation}\label{e.205}
\begin{aligned}
|\mathcal{E}_\eps(h)|=\left|\beta^2\eps^2 \int_0^{\eps^{-2}}\int_{\R^{3d}}(|x|^2-|y|^2)R(y-z)Q_3(t,x,y,z)dxdydzdt\right|\\
\leq C \eps^2 \int_0^{\eps^{-2}}\int_{\R^{3d}} ||x|^2-|y|^2| R(y-z) G_t(x)G_t(y)G_t(z)dxdydzdt.
\end{aligned}
\end{equation}
To estimate the above integral, as $R$ is compactly supported (suppose its support has a radius $M>0$), we have 
\[
\int_{\R^d} R(y-z) G_t(z) dz \leq C \int_{\R^d} \1_{\{|y-z| \leq M\}} G_t(z)dz \leq C\Big(\1_{t\leq 1}+\1_{t>1} \sup_{z:|z-y|\leq M} G_t(z)\Big).
\]
For the case of $t>1$, by considering $|y|\leq 2M$ and $|y|>2M$ separately, we derive
\[
\sup_{z:|z-y|\leq M} G_t(z) \leq C G_{c_1t}(y),  \quad \mbox{ for some } c_1>0.
\]
From \eqref{e.205} and the above estimate, the following bound holds
\[
\begin{aligned}
|\mathcal{E}_\eps(h)| \leq& C\eps^2 \int_0^1 \int_{\R^{2d}}||x|^2-|y|^2|  G_t(x)G_t(y) dxdydt\\
&+C\eps^2\int_1^{\eps^{-2}}\int_{\R^{2d}}||x|^2-|y|^2| G_t(x)G_t(y)G_{c_1t}(y)dxdydt=A_1+A_2.
\end{aligned}
\]
We first get $A_1\leq C\eps^2$. For $A_2$, by the fact that 
\[
G_t(y)G_{c_1t}(y) \leq C t^{-d/2}G_{c_2t}(y), \quad \mbox{ for some } c_2>0,
\]
we have 
\[
\begin{aligned}
A_2&\leq C\eps^2\int_1^{\eps^{-2}}\int_{\R^{2d}}t^{-d/2}||x|^2-|y|^2| G_{t}(x)G_{c_2t}(y)dxdydt \\
&\leq C \eps^2\int_1^{\eps^{-2}}t^{1-d/2}dt\leq C\Big( \eps\1_{d=3}+\eps^2|\log \eps|\1_{d=4}+\eps^2\1_{d\geq5}\Big).
\end{aligned}
\]
This completes the proof.
\end{proof}

\section{Growth of moments: proof of Theorem~\ref{t.23}}
\label{s.23}

In the interest of the simplest presentation, we remove the parameter $\beta$ by scaling.  Indeed, let $\overline g(t,x) = \beta^{-2} g(t \beta^{-4}, x \beta^{-2})$, and observe that
\[
	\overline g_t
		= \overline g_{xx} + \overline g (\|\overline g\|^2 - \overline g).
\]

Hence, for the remainder of the section, we set $\beta = 1$; that is, we are interested in 
\begin{equation}\label{e.maineq1}
\begin{aligned}
\partial_t g(t,x)
&=\tfrac12\Delta g(t,x)+ \|g(t,\cdot)\|^2 g(t,x)- g(t,x)^2, \quad\quad t>0, x\in\R,\\
g(0,x)&=q_0(x).
\end{aligned}
\end{equation}
Here we abused notation by reverting to $g$ as opposed to using $\overline g$.  Undoing this simple scaling reveals the dependence on $\beta$ of our results.

In order to control the moments of $g$, it is necessary to understand the asymptotic behavior of $\|g\|^2$ as $t\to\infty$.  By interpolation and the fact that $g$ is a probability density (noted below), it is enough to control the maximum of $g$.  In the following section, we state the main estimate on the decay of the maximum of $g$.  After, we show how to use this to obtain upper and lower bounds on the moments of $g$ by constructing sharp sub and supersolutions of $g$.  In \Cref{s.max_decay}, we show how to obtain the correct asymptotics on the maximum of $g$.

\subsection{Statement of the main inequality and its application to the moments of $g$}

In order to streamline the argument, we define a few quantities that play key roles in the proof.  For any $t \geq 0$, let
\be\label{e.M_E_D}
	M(t) = \max_{x\in\R} g(t,x),
	\quad
	E(t) = \int_{\R} g(t,x)^2 dx,
	\quad \text{ and }\quad
	D(t) = \int_{\R} |g_x(t,x)|^2 dx.
\ee

The key inequality that we require is stated in the following proposition, proved in \Cref{s.max_decay}.
\begin{proposition}\label{p.max_decay}
	There is a universal constant $C_0$, independent of the initial data, such that
	\[
		M(t)
			\leq \frac{C_0}{t^{2/3}}
			\qquad\text{ for all } t >0.
	\]
\end{proposition}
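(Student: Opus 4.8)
\emph{Strategy.} The plan is to bootstrap the pointwise bound from a single autonomous differential inequality for $S(t):=M(t)+E(t)$, of the form $S'(t)\le -c_0\,S(t)^{5/2}$ with a universal constant $c_0$; integrating this yields $S(t)\le(\tfrac32 c_0 t)^{-2/3}$, and since $M\le S$ the proposition follows (with $C_0$ depending only on $c_0$). Three ingredients go into producing this inequality. First, the elementary invariants: from \eqref{e.maineq1}, $g\ge 0$ by the maximum principle (the reaction $g(\|g\|^2-g)$ vanishes at $g=0$), and $\frac{d}{dt}\int g\,dx=\|g\|^2\bigl(\int g-1\bigr)$, so $\int g(t,x)\,dx=1$ for all $t$; hence $E(t)=\int g^2\le(\sup_x g)\int g=M(t)$. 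Second, a differential inequality at the maximum: for $t>0$ the solution $g(t,\cdot)$ is smooth with spatial decay, $M(t)$ is attained at an interior point $x_t$ with $g_{xx}(t,x_t)\le 0$, and the usual argument gives, in the sense of Dini derivatives, $M'(t)\le\|g(t,\cdot)\|^2 M(t)-M(t)^2=-M(t)\bigl(M(t)-E(t)\bigr)$. Third, a differential inequality for $E$: differentiating and integrating by parts, $E'(t)=-D(t)+2E(t)^2-2\int g^3\,dx\le-D(t)$, where $\int g^3\ge\bigl(\int g^2\bigr)^2=E^2$ by Cauchy--Schwarz together with $\int g=1$.

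\emph{The key coupling inequality.} The heart of the matter is a quantitative link between the Dirichlet energy $D$ and the defect $M-E$, and this is where the continuity of $g$ really enters. Using $\int g=1$, write $M(t)-E(t)=\int g(t,x)\bigl(M(t)-g(t,x)\bigr)\,dx\ge 0$; the claim is
\[
	\bigl(M(t)-E(t)\bigr)\,D(t)\ \ge\ \frac{M(t)^4}{64}.
\]
To see this, fix $t$ and a maximizer $x_0$. Since $g\to 0$ at infinity, moving rightward from $x_0$ there is a first point $b$ with $g(b)=M/4$ and then a last point $a\in[x_0,b)$ with $g(a)=3M/4$; on $[a,b]$ one has $M/4\le g\le 3M/4$, so $g(M-g)\ge M^2/16$ there, while Cauchy--Schwarz applied to $\int_a^b g_x\,dx$ (whose modulus is $M/2$) gives $b-a\ge(M/2)^2/D$. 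Integrating $g(M-g)$ over $[a,b]$ produces the displayed inequality. I expect this level-set estimate --- bounding how slowly a continuous function can drop in terms of its Dirichlet energy --- to be the main obstacle: besides getting the level-set geometry right, one must verify that its exponents are exactly those that make the resulting ODE give the rate $t^{-2/3}$ (this is, in effect, where the exponent $\tfrac23$ comes from).

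\emph{Closing the argument.} Multiplying the inequalities for $M'$ and $E'$ and invoking the coupling inequality,
\[
	\bigl(-M'(t)\bigr)\bigl(-E'(t)\bigr)\ \ge\ M\,(M-E)\cdot\frac{M^4}{64\,(M-E)}\ =\ \frac{M^5}{64},
\]
so by AM--GM $-(M+E)'(t)\ge 2\sqrt{M^5/64}=M^{5/2}/4$. Since $E\le M$ we have $M\ge S/2$, hence $S'(t)\le-\tfrac14 M^{5/2}\le-2^{-9/2}S(t)^{5/2}$; thus $c_0=2^{-9/2}$ works. Rewriting as $\frac{d}{dt}\bigl(S^{-3/2}\bigr)\ge\tfrac32\cdot 2^{-9/2}$ and integrating from $\varepsilon$ to $t$, then sending $\varepsilon\downarrow 0$ --- the boundary term $S(\varepsilon)^{-3/2}$ is nonnegative and is simply dropped, which is exactly why the resulting constant is independent of $q_0$ --- gives $S(t)^{-3/2}\ge 3\cdot 2^{-11/2}\,t$, i.e.\ $M(t)\le S(t)\le C_0\,t^{-2/3}$ with $C_0=(2^{11/2}/3)^{2/3}$. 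The only remaining points are standard technical underpinnings: smoothness and spatial decay of $g$ for $t>0$ (so that $M$ is attained and Lipschitz and $E$ is $C^1$ with differentiation under the integral valid), and the Hamilton/Dini-derivative reduction of the pointwise bound at a maximizer to the differential inequality for $M$; both follow from the well-posedness and regularity theory for \eqref{e.maineq1}.
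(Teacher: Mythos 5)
Your proposal is correct, and it shares the paper's structural ingredients: the differential inequalities $\dot M \le -M(M-E)$ and $\dot E \le -D$ (the paper's \eqref{e:mp} and \eqref{e:e_i2}), and, crucially, the coupling inequality $(M-E)\,D \gtrsim M^4$, which is exactly \Cref{l.dissipation} — your level-set argument with levels $M/4$ and $3M/4$ is the paper's proof with levels $M/3$ and $2M/3$, giving the constant $64$ in place of $81$. Where you genuinely diverge is in how these are combined. The paper runs a bootstrap/contradiction on $t^{2/3}M(t)$: it defines $t_0$ as the first time the bound fails, splits into the cases $M(t_0/2) > 2M(t_0)$ and $M(t_0/2) \le 2M(t_0)$, and in the second case integrates $\dot M \le -C_1^{-1} M^5/D$ by ODE comparison and controls $\int_{t_0/2}^{t_0} D(s)^{-1}\,ds$ via Cauchy--Schwarz in time against $E(t_0/2)-E(t_0)$. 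You instead multiply the two differential inequalities, cancel the factor $M-E$ using the coupling inequality, and apply AM--GM to obtain the single autonomous inequality $\frac{d}{dt}(M+E) \le -c_0 (M+E)^{5/2}$, which integrates directly to the $t^{-2/3}$ bound with an explicit universal constant ($C_0=(2^{11/2}/3)^{2/3}$) and no case analysis — arguably a cleaner closing step. The one trade-off worth noting is regularity bookkeeping: your AM--GM step needs the pointwise (a.e.) differential inequalities for $M$ and $E$ simultaneously, and nonnegativity of both $-\dot M$ and $-\dot E$ a.e., whereas the paper only ever invokes the time-integrated forms \eqref{e:mp} and \eqref{e:e_i2}. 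Since $M$ is locally Lipschitz in $t$ and $E$ is differentiable for $t>0$ under the standing smoothness and decay of $g$, this is the same standard parabolic technicality the paper waves away, so it does not affect correctness, but it is a slightly stronger input than the paper's argument requires; likewise your passage from $S(\varepsilon)$ to $\varepsilon\downarrow 0$, dropping the nonnegative boundary term, correctly explains why $C_0$ is independent of $q_0$, exactly as in the paper.
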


Two more useful facts are the following.  Integrating~\eqref{e.maineq1}, we see that
\[
	\frac{d}{dt} \int_{\R} g(t,x) dx = E(t) \left(\int_{\R} g(t,x)dx - 1\right)
\]
Since $\int g(0,x) dx =\int q_0(x)dx=1$, by assumption, a simple ODE argument yields, for any $t\geq 0$,
\be\label{e.mass_one}
	\int_{\R} g(t,x) dx = \int_{\R} q_0(x) dx = 1.
\ee
Thus $g(t,\cdot)$ is a probability density (note that $g\geq0$ by comparison principle and the fact that $q_0\geq0$).  This is unsurprising given the derivation of the model~\eqref{e.maineq1}; however, it is crucial in our analysis.  Indeed, we immediately deduce the following useful inequality:
\be\label{e.E_less_M}
	E(t)
		\leq M(t) \int_{\R} g(t,x) dx
		= M(t).
\ee

We now show how to conclude \Cref{t.23} assuming \Cref{p.max_decay}.  We begin with the upper bound.

\begin{proof}[Proof of the upper bound in \Cref{t.23}]
	The first step is to replace the $\|g\|^2 = E$ term in~\eqref{e.maineq1}.  From \Cref{p.max_decay} and~\eqref{e.E_less_M}, we see that
	\[
		E(t) \leq \frac{C_0}{t^{2/3}}.
	\]
	
	This, along with~\eqref{e.maineq1}, implies that
	\[
		\partial_t g - \frac{1}{2} \Delta g - \frac{C_0}{t^{2/3}} g \leq 0.
	\]
	The comparison principle implies that $g\leq \overline g$, where $\overline g$ solves
	\be\label{e.supersoln}
		\begin{cases}
			\partial_t \overline g
				- \frac{1}{2} \Delta \overline g - \frac{C_0}{t^{2/3}} \overline g = 0
					\qquad &\text{ in } (0,\infty) \times \R,\\
			\overline g = q_0
						&\text{ on } \{0\}\times \R.
		\end{cases}
	\ee
	
	The second step is to obtain a bound on $\overline g$, and, hence, on $g$, for large $x$.  The first thing to notice is that
	\[
		\overline h(t,x) = \exp\left\{- \int_0^t \frac{C_0}{s^{2/3}} ds\right\} \overline g(t,x)
	\]
	solves the heat equation, $\partial_t\overline h = \frac{1}{2} \Delta \overline h$.  It follows that
	\[
		\begin{split}
			\overline g(t,x)
				&= \exp\left\{\int_0^t \frac{C_0}{s^{2/3}} ds\right\}
					\overline h(t,x)\\
				&= \exp\left\{\int_0^t \frac{C_0}{s^{2/3}} ds\right\} \int_{\R} \frac{1}{\sqrt{2\pi t}} e^{- \frac{y^2}{2t}}  q_0(x-y) dy.
		\end{split}
	\]
	By assumption, $q_0$ is compactly supported.  A straightforward estimate of the convolution, as well as a simple evaluation of the time integral, yields, for any $t \geq 1$ and any $x$,
	\[
		g(t,x)
			\leq \overline g(t,x)
			\leq \frac{C}{\sqrt t} e^{3C_0 t^{1/3} - \frac{x^2}{2t}}
	\]
	for some positive constant $C$ depending only on the initial data.
	
	We now conclude the bound on the moments of $g$.  Pairing the above arguments with~\eqref{p.max_decay}, we have established that, for all $t\geq 1$,
	\[
		g(t,x)
			\leq C \min\left\{\frac{1}{t^{2/3}},  \frac{1}{t^{1/2}} \exp\left\{ 3C_0 t^{1/3} - \frac{x^2}{2t}\right\}\right\}.
	\]
	We now use this to conclude the proof.  Indeed, for any $p\geq 1$, we find
	\[\begin{split}
		\int_{\R} |x|^p g(t,x) dx
			&\leq \int_{|x| \leq 6 C_0 t^{2/3}} |x|^p \frac{C}{t^{2/3}} dx
				+ \int_{|x| > 6 C_0 t^{2/3}} |x|^p\frac{C}{t^{1/2}} e^{3 C_0 t^{1/3} - \frac{x^2}{2t}} dx\\
			&\leq C t^\frac{2p}{3}
				+ Ct^{p/2} \int_{|y| > 6 C_0 t^{1/6}} |y|^p e^{3 C_0 t^{1/3} - \frac{y^2}{2}} dy\\
			&\leq C t^\frac{2p}{3}
				+ C t^{p/2} t^{(p-1)/6} e^{ - (2(3C_0)^2 - (3C_0)) t^{1/3}}.
	\end{split}\]
	where $C$ is a constant depending only on $q_0$ and $p$ that changes line-by-line.  The second term clearly tends to zero.  This completes the proof.
\end{proof}

It is now possible to deduce the lower bound using the upper bound \gu{given in Proposition~\ref{p.max_decay}} and~\eqref{e.mass_one}.  We require one lemma.
\begin{lemma}\label{l.minimizer}
Assume that $\lambda > 0$, $d\geq 1$, and $w: [0,\infty) \to \R$ is an increasing function.   Then
\[
	\min_{
			\substack{
				\int g(x) dx = 1,\\ 0 \leq g \leq \lambda
			}
		}
		\int_{\R^d} w(|x|) g(x)dx
		= \lambda \int_{B_{(\lambda \omega_d)^{-1/d}}} w(|x|) dx.
\]
where $B_r$ denotes the ball centered at the origin with radius $r$ and $\omega_d$ is the volume of the unit ball in $\R^d$.
\end{lemma}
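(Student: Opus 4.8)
The statement in \Cref{l.minimizer} is a rearrangement-type inequality: among all probability densities bounded above by $\lambda$, the one minimizing $\int w(|x|)g(x)\,dx$ (with $w$ increasing in the radial variable) is the one that piles as much mass as possible close to the origin, namely $g = \lambda \1_{B_r}$ where $r = (\lambda\omega_d)^{-1/d}$ is chosen so that $\lambda|B_r| = 1$. The plan is to prove this via a direct exchange/bathtub argument rather than invoking symmetric decreasing rearrangement machinery, since the constraints $0\le g\le \lambda$ and $\int g = 1$ are exactly the hypotheses of the classical bathtub principle.

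First I would verify the candidate is admissible: with $r = (\lambda\omega_d)^{-1/d}$ one has $\lambda|B_r| = \lambda\omega_d r^d = 1$, so $g^* := \lambda\1_{B_r}$ satisfies $\int g^* = 1$ and $0\le g^*\le\lambda$, and its value of the functional is exactly the claimed right-hand side $\lambda\int_{B_r} w(|x|)\,dx$. Then, for any other admissible $g$, I would write
\[
	\int_{\R^d} w(|x|)\,g(x)\,dx - \int_{\R^d} w(|x|)\,g^*(x)\,dx
		= \int_{\R^d} w(|x|)\,(g(x)-g^*(x))\,dx.
\]
On $B_r$ we have $g^* = \lambda \ge g$, so $g - g^* \le 0$ there; on $B_r^c$ we have $g^* = 0 \le g$, so $g - g^* \ge 0$ there. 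Since $w(|x|)$ is increasing in $|x|$, we may bound $w(|x|) \le w(r)$ on $B_r$ and $w(|x|)\ge w(r)$ on $B_r^c$; hence
\[
	\int_{\R^d} w(|x|)(g-g^*)\,dx
		\ge \int_{B_r} w(r)(g-g^*)\,dx + \int_{B_r^c} w(r)(g-g^*)\,dx
		= w(r)\int_{\R^d}(g-g^*)\,dx = 0,
\]
using $\int g = \int g^* = 1$. This gives the desired inequality $\int w(|x|)g\,dx \ge \int w(|x|)g^*\,dx$, and since $g^*$ achieves it, the minimum equals the claimed value.

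The only genuine subtlety — and the step I expect to require the most care — is integrability and the sign manipulations when $w$ is unbounded (e.g. $w(|x|) = |x|^p$, the case actually used in the lower-bound proof of \Cref{t.23}): I must ensure $\int w(|x|)g(x)\,dx$ is well-defined in $[0,\infty]$ so that the inequality is meaningful even when the left side is $+\infty$, and that the splitting of the integral over $B_r$ and $B_r^c$ is legitimate. Since $w\ge 0$ (being increasing on $[0,\infty)$ — if $w(0)<0$ one first replaces $w$ by $w - w(0)$, which changes both sides of the identity by the same constant $w(0)\int g = w(0)$ and so is harmless), all integrands are nonnegative after the appropriate regrouping, so Tonelli/monotone convergence justifies every step and no finiteness hypothesis is needed. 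A remark that the statement (and proof) is just the bathtub principle specialized to radial nondecreasing weights would situate it for the reader, but the self-contained exchange argument above suffices.
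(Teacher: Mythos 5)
Your proof is correct: the exchange (bathtub) argument, comparing any admissible $g$ with $g^*=\lambda\1_{B_{(\lambda\omega_d)^{-1/d}}}$ and using monotonicity of $w$ to bound $w(|x|)$ by $w(r)$ on either side of the sphere, is exactly the elementary reasoning behind the paper's assertion that the minimizer is ``clearly'' $\lambda\1_{B_{(\lambda\omega_d)^{-1/d}}}$ (the paper omits the proof). Your handling of possibly unbounded or sign-shifted $w$ is also fine and makes the omitted argument fully rigorous.
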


This lemma is elementary and follows from the fact that the minimizer is clearly $\lambda \1_{B_{(\lambda \omega_d)^{-1/d}}}(x)$.  Hence, we omit its proof.  We now conclude the proof of \Cref{t.23}.

\begin{proof}[Proof of the lower bound in \Cref{t.23}]

From \Cref{p.max_decay}, we know that $g(t,x)\leq \frac{C_0}{(1+t)^{2/3}}=:\lambda$ 
for all $t$.  Hence
\[
	\int_{\R} |x|^p g(t,x)dx
		\geq \min_{\substack{\int \overline g(x) dx = 1,\\ 0 \leq \overline g \leq \lambda}} 
			\int_\R |x|^p \overline g(x) dx.
\]
Applying \Cref{l.minimizer}, we have 
\[
	\min_{\substack{\int \overline g(x) dx = 1,\\ 0 \leq \overline g \leq \lambda}}
			\int_\R |x|^p \overline g(x) dx
		\geq \lambda \int_{-1/2\lambda}^{1/2\lambda} |x|^p dx
		= \frac{2^{-(p+1)}}{p+1} \lambda^{-p}
		=  \frac{2^{-(p+1)}}{p+1} \left(\frac{(1+t)^{2/3}}{C_0}\right)^p,
\]
which concludes the proof.
\end{proof}

\subsection{Decay of the maximum of $g$}\label{s.max_decay}

Classical techniques for decay of parabolic equations are often based on Nash's inequality, which relates the $L^2$ norm of the gradient of $g$ with the $L^2$ norm of $g$.  Such an estimate necessarily gives decay like $O(t^{-1/2})$ in $d=1$, which is slower than the rate of decay we prove below.  Hence, such a strategy is not useful here.

In other words, the Laplacian term (and the related Dirichlet energy $D$) is not sufficient to obtain decay like $O(t^{-2/3})$.  The only other term in the equation is $g(E-g)$, and, hence, our proof must be based on this term.  The key observation is that near the maximum of $g$, we expect $g(E - g) \approx M(E-M) < 0$.    As such, we require an estimate that quantifies how negative this term is. 

In fact, our argument is more subtle than this.  We use the decay induced by both terms $-D$ and $-M(M-E)$.  Indeed, if $M-E$ is large, then the nonlinear term $-M(M-E)$ is a large negative number.  On the other hand, if $M \approx E$, it must be that $g$ ``flattens'' quickly after reaching the maximum, making $D$ large (recall that $g$ is a probability measure so if $M \approx E$ then $g$ is near the optimal case in H\"older's inequality, which, in turn, implies that $g$ is nearly an indicator function).  In both cases, we get a large decay term. 
The key estimate quantifying this heuristic is the following, which is proved at the end of the section.
\begin{lemma}\label{l.dissipation}
There is a universal constant $C_1>0$ such that
\[
	M - E
		\geq \frac{M^4}{C_1 D}.
\]
\end{lemma}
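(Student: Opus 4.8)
The plan is to exploit the fact that $g(t,\cdot)$ is a probability density in order to rewrite $M-E$ as an integral of $g(M-g)$, and then to bound that integral from below by isolating a short ``transition interval'' on which the integrand is of order $M^2$ and whose width is controlled from below by $D^{-1}$. To begin, I would record the identity $M-E = M\int_\R g(t,x)\,dx - \int_\R g(t,x)^2\,dx = \int_\R g(t,x)\big(M-g(t,x)\big)\,dx$, using the conservation of mass~\eqref{e.mass_one}; this quantity is nonnegative, consistent with~\eqref{e.E_less_M}. We may assume $D<\infty$, since otherwise the asserted inequality is trivial, and note that $M>0$ and $D>0$ (a nonzero probability density on $\R$ cannot be constant and cannot vanish identically).

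Next I would construct an interval $[a,b]$ on which $M/4\le g(t,\cdot)\le 3M/4$, with $g(t,a)=3M/4$ and $g(t,b)=M/4$. Let $x_0$ be a point where the maximum $M$ is attained. Since $g(t,\cdot)$ is a probability density, the superlevel set $\{x:g(t,x)\ge M/4\}$ has Lebesgue measure at most $4/M<\infty$, so it cannot contain the half-line $(x_0,\infty)$; hence there is $b'>x_0$ with $g(t,b')<M/4$, and by the intermediate value theorem there is a smallest $b\in(x_0,b')$ with $g(t,b)=M/4$, so that $g(t,\cdot)>M/4$ on $[x_0,b)$. Setting $a:=\sup\{x\le b: g(t,x)\ge 3M/4\}$, continuity forces $g(t,a)=3M/4$, while $x_0\le a<b$ and $g(t,\cdot)<3M/4$ on $(a,b]$; thus $M/4\le g(t,\cdot)\le 3M/4$ throughout $[a,b]$.

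Finally I would bound the width of $[a,b]$ from below via the Dirichlet energy: by the fundamental theorem of calculus and Cauchy--Schwarz,
\[
	\frac{M}{2} = g(t,a)-g(t,b) = -\int_a^b g_x(t,s)\,ds \le (b-a)^{1/2}\Big(\int_a^b |g_x(t,s)|^2\,ds\Big)^{1/2} \le (b-a)^{1/2} D^{1/2},
\]
so $b-a\ge M^2/(4D)$. On $[a,b]$ we have $g(M-g)\ge (M/4)(M/4)=M^2/16$, whence
\[
	M-E = \int_\R g(t,x)\big(M-g(t,x)\big)\,dx \ge \int_a^b g(t,x)\big(M-g(t,x)\big)\,dx \ge \frac{M^2}{16}(b-a) \ge \frac{M^4}{64\,D},
\]
giving the lemma with $C_1=64$.

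The estimates displayed above are routine; the one step that requires genuine care — and the main, if mild, obstacle — is the construction of the crossing points $a$ and $b$, where the finiteness of the total mass $\int_\R g=1$ is exactly what prevents $g$ from remaining above $M/4$ forever, and where one must also dispose of the degenerate cases $M=0$ and $D\in\{0,\infty\}$.
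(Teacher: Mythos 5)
Your proof is correct and follows essentially the same route as the paper's: rewrite $M-E=\int_\R g(M-g)\,dx$ using mass conservation, isolate a transition interval between two level sets at heights comparable to $M$ (you use $M/4$ and $3M/4$, the paper uses $M/3$ and $2M/3$), bound its length from below by $M^2/D$ via the fundamental theorem of calculus and Cauchy--Schwarz, and bound the integrand there by a multiple of $M^2$. The only cosmetic difference is that you justify the existence of the crossing points via the finite total mass rather than the decay of $g$ at infinity, which is equally valid.
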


Before beginning the proof of \Cref{p.max_decay}, we collect two more inequalities.  The first is that, for any $0 < t_1 < t_2$,
\be\label{e:mp}
	M(t_2)
		\leq M(t_1) +  \int_{t_1}^{t_2} (E(s) M(s) - M(s)^2) ds.
\ee
Informally, this can be seen by noting that, at a maximum, $\Delta g \leq 0$, so that~\eqref{e.maineq1} reads $\dot M \leq EM - M^2$, where we used the physics notation $\cdot$ to denote the time derivative. This differential inequality has to be interpreted in the suitable weak sense, but this purely technical issue is standard in parabolic theory and, hence, we omit the details.

The second inequality is, for all $t_1 < t_2$,
\be\label{e:e_i2}
	E(t_2)
		\leq E(t_1) - \int_{t_1}^{t_2} D(s) ds.
\ee
In order to see this, simply multiply~\eqref{e.maineq1} by $g$ and integrate in $x$ in order to obtain
\[
	\frac{1}{2} \dot E
		+ \frac{1}{2} D
		\leq E^2 - \int g^3 .
\]
Since
\[
	E(t)
		= \int_{\R} g^{3/2}(t,x) g^{1/2}(t,x) dx
		\leq \left( \int_\R g^3(t,x) dx \right)^{1/2} \left( \int_\R g(t,x) dx \right)^{1/2},
\]
then $\frac{1}{2} \dot E + \frac{1}{2} D \leq 0$. Integrating this in time yields~\eqref{e:e_i2}.

We now proceed with the proof of \Cref{p.max_decay}.

\begin{proof}[Proof of \Cref{p.max_decay}]
Let
\[
	t_0 = \sup\big\{ t >0 : \sup_{s \in [0,t]} s^{2/3} M(s) < A\big\}, \quad\quad A=2C_1^{1/3},
\]
with the $C_1$ from Lemma~\ref{l.dissipation}. It is clear that if $t_0 = \infty$ then the proof is finished.  We proceed by contradiction assuming that $t_0$ is finite.  By continuity, it is also clear that $M(t_0) = A t_0^{-2/3}$.  There are two cases to consider.

{\bf Case one: $M(t_0/2) > 2 M(t_0)$.} 
Since $t_0/2 < t_0$, then, using the definition of $t_0$, we have
\[
	\frac{2A}{t_0^{2/3}}
		= 2M(t_0)
		< M(t_0/2)
		< \frac{A}{(t_0/2)^{2/3}}
		= \frac{2^{2/3} A}{t_0^{2/3}}.
\]
This is a contradiction since $2 > 2^{2/3}$.  Hence, this case cannot occur.

{\bf Case two: $M(t_0/2) \leq 2 M(t_0)$.} 
We first combine \Cref{l.dissipation} and~\eqref{e:mp} to find
\[
	M(t)
		\leq M(t_0/2) - \frac{1}{C_1} \int_{t_0/2}^t \frac{M(s)^5}{D(s)} ds.
\]
Since this is true for all $t$, it follows that
\[
	M(t_0) \leq \overline M(t_0),
\]
where $\dot{\overline M} = - C_1^{-1} \overline M^5 D^{-1}$ and $\overline M(t_0/2) = M(t_0/2)$.  Elementary calculus yields
\be\label{e:c1171}
	M(t_0)
		\leq \overline M(t_0)
		= \left(
			M(t_0/2)^{-4}
				+ \frac{4}{C_1} \int_{t_0/2}^{t_0} D(s)^{-1} ds
			\right)^{-1/4}.
\ee

Then, using (in order) Cauchy-Schwarz, \eqref{e:e_i2}, \eqref{e.E_less_M}, and the assumption that $M(t_0/2) \leq 2 M(t_0)$, we find
\be
\begin{split}
	\frac{1}{4} t_0^2
		&= \left(\int_{t_0/2}^{t_0} \sqrt{D(s)}\frac{1}{\sqrt{D(s)}} ds\right)^2
		\leq (E(t_0/2) - E(t_0)) \int_{t_0/2}^{t_0} D(s)^{-1} ds\\
		&\leq E(t_0/2) \int_{t_0/2}^{t_0} D(s)^{-1} ds
		\leq  M(t_0/2) \int_{t_0/2}^{t_0} D(s)^{-1} ds\\
		&\leq 2 M(t_0) \int_{t_0/2}^{t_0} D(s)^{-1} ds.
\end{split}
\ee
Using this inequality in~\eqref{e:c1171}, we obtain
\be
	M(t_0)
		\leq \left(
			\frac{1}{M(\frac{t_0}{2})^{4}}
				+ \frac{1}{2C_1}\frac{t_0^2}{M(t_0)}
			\right)^{-\frac14}
		\leq
			\left(
				 \frac{1}{2C_1}\frac{t_0^2}{M(t_0)}
			\right)^{-\frac14}
		= \left(2C_1 \frac{M(t_0)}{t_0^2}\right)^{\frac14}.
\ee
Re-arranging this yields
\be
	M(t_0)
		< \frac{(2C_1)^{1/3}}{t_0^{2/3}}.
\ee
However, by the construction of $t_0$, we have that $M(t_0) = A t_0^{-2/3}=2C_1^{1/3}t_0^{-2/3}$.  Hence, we have reached a contradiction, and we conclude that case two cannot occur either.

Since both cases yield a contradiction, it follows that $t_0 = \infty$, which completes the proof.
\end{proof}

It only remains to establish \Cref{l.dissipation}.  We do this now.  The idea of the proof is to re-write $M-E$ in terms of a single integral term and then use the proof of a lemma of Constantin, Kiselev, Oberman, and Ryzhik~\cite[Lemma 2]{Constantin_2000}.  This lemma was originally used to establish the key inequality in a proof of lower bounds on the speed of Fisher-KPP fronts in the presence of shear flows in a cylinder.

%
%
%

\begin{proof}[Proof of \Cref{l.dissipation}]
As time plays no role in this lemma, we omit it notationally.  First, observe that, due to~\eqref{e.mass_one}, we have
\[
	M - E
		= M \int_\R g(x) dx  - \int_\R g(x)^2 dx
		= \int_\R g(x) (M - g(x)) dx.
\]
Notice that the integrand $g(M - g)$ is nonnegative.

Since $M$ is the maximum of $g$ and $\lim_{x\to-\infty} g(x) = 0$, we can find $x_1 < x_2$ such that
\be
	g(x_1) = \frac{M}{3},
		\quad
	g(x_2) = \frac{2M}{3},
		\quad \text{and} \quad
	\frac{M}{3} \leq g(x) \leq \frac{2M}{3} \text{ for all } x \in (x_1,x_2).
\ee
Then we have that
\be\label{e:c1172}
	\frac{M}{3}
		= \int_{x_1}^{x_2} g_x\ dx
		\leq \sqrt{x_2 - x_1} \left( \int_\R |g_x|^2 dx\right)^{1/2}.
\ee
On the other hand, since $M/3 \leq g(x) \leq 2M/3$ for all $x \in (x_1,x_2)$, then $g(M-g) \geq M^2/9$ on $(x_1,x_2)$.  It follows that
\be\label{e:c1173}
	\int_\R g(x)(M - g(x)) dx
		\geq	\int_{x_1}^{x_2} g(x) (M - g(x)) dx
		\geq  \int_{x_1}^{x_2} \frac{M^2}{9} dx
		= \frac{M^2}{9} |x_2-x_1|.
\ee
After squaring~\eqref{e:c1172} and inserting~\eqref{e:c1173} into it, we find
\be
	\frac{M^2}{9}
		\leq \frac{9}{M^2} \int_\R g(x) (M - g(x)) dx
			\int_\R |g_x|^2 dx,
\ee
which yields the claim.
\end{proof}

\appendix

\section{Basics about stochastic heat equation}\label{s.she}

For the convenience of the reader, we present some standard facts about the stochastic heat equation. We will first discuss the case of the spacetime white noise in $d=1$, then that of colored noise in $d\geq1$. 

\subsection{Spacetime white noise in $d=1$} 
For readability, we define the following notation.  For any $\eps>0$ and for any $f$ such that either $0 \leq f \in C_b(\R)$ or $f(\cdot) = \delta(\cdot)$,
we define $u_{\eps,f}$ and $u_f$ as the solutions to the following equations in $d=1$:
\[
\begin{aligned}
\partial_tu_{\eps,f}(t,x)&=\tfrac12\Delta u_{\eps,f}(t,x)+\beta\,u_{\eps,f}(t,x)\eta_\eps(t,x),  \quad &u_{\eps,f}(0,x)=f(x),\\
\partial_tu_{f}(t,x)&=\tfrac12\Delta u_{f}(t,x)+\beta\,u_{f}(t,x)\eta(t,x),   \quad &u_{f}(0,x)=f(x),
\end{aligned}
\]
where $\eta$ is the spacetime white noise, $\eta_\eps$ is the spatial mollification of $\eta$, and they are time reversals of $\xi,\xi_\eps$:
\[
\eta(t,x)=\xi(-t,x), \quad\quad \eta_\eps(t,x)=\xi_\eps(-t,x).
\]

\begin{lemma}\label{l.mmbd}


(i) For any $p\geq1, T>0$, there exists $C=C(p,\beta,T)>0$ such that 
\[
 \bfE[ u_{\eps,f}(t,x)^p]+\bfE[u_{f}(t,x)^{p}] \leq C (G_t\star f(x))^p, \quad \mbox{ for all }\eps\in(0,1),t\in (0,T],x\in\R;
\]

(ii) $u_{\eps,f}(t,x)\to u_f(t,x)$ in $L^p(\Omega)$  as $\eps\to0$, uniformly in $x\in\R$ and $t$ in compact subsets of $(0,\infty)$;

(iii) For $f\in C_b(\R)$, we have $u_f(t,x)\to f(x)$ in $L^p(\Omega)$ as $t\to0$, for each $x\in\R$;

(iv) $u_f\in C((0,\infty)\times \R, L^p(\Omega))$.

\end{lemma}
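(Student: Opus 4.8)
The plan is to obtain all four items from the two standard representations of the solution — the Duhamel (mild) formulation and the Feynman--Kac formula — together with one quantitative input about Brownian local time. Throughout, the delicate point is uniformity as $\eps\to 0$, since the mollified covariance $R_\eps$ degenerates to the contact interaction $\delta$. For part (i), by the Feynman--Kac formula (see \cite{bertini1995stochastic}, up to a harmless time reversal) one has $u_{\eps,f}(t,x)=\E_x[f(B_t)\,\mathcal M^{\eps}_t]$ and $u_f(t,x)=\E_x[f(B_t)\,\mathcal M_t]$, where $B$ is a Brownian motion started at $x$ and independent of the noise, and $\mathcal M^\eps_t,\mathcal M_t$ are the corresponding (Wick-renormalized) multiplicative weights, normalized so that $\bfE[\mathcal M^\eps_t\mid B]=\bfE[\mathcal M_t\mid B]=1$. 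Raising to the $p$-th power introduces $p$ independent copies $B^1,\dots,B^p$ of $B$, and integrating out the Gaussian noise turns the product of weights into
\[
\bfE\big[\textstyle\prod_{i=1}^p\mathcal M^{\eps,i}_t\mid B^{1},\dots,B^p\big]
=\exp\Big(\beta^2\textstyle\sum_{1\le i<j\le p}\int_0^t R_\eps(B^i_s-B^j_s)\,ds\Big),
\]
with $R_\eps$ replaced by $\delta$ (and the integral by the intersection local time $L^0_t(B^i-B^j)$) in the white-noise case. Conditioning the $B^i$ on their terminal values factors out $(G_t\star f(x))^p$ exactly — after integrating against $f$ when $f\in C_b$, and by reading off the transition density when $f=\delta$ — so (i) reduces to the bound $\sup_{\eps\in(0,1)}\sup_{y_1,\dots,y_p}\E[\exp(\beta^2\sum_{i<j}\int_0^t R_\eps(\tilde B^i_s-\tilde B^j_s)\,ds)]\le C(p,\beta,T)$ for $t\le T$, where $\tilde B^i$ are independent Brownian bridges on $[0,t]$ with the prescribed endpoints.

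This last uniform bound is the heart of the matter. Since separating the bridge endpoints only decreases the functional, it suffices to take all $\tilde B^i$ pinned at a common point, so that each $\tilde B^i-\tilde B^j$ is a constant multiple of a Brownian bridge from $0$ to $0$; by H\"older's inequality we further reduce to a single pair and estimate $\E[\exp(c\int_0^t R_\eps(X_s)\,ds)]$ for such an $X$ and any $c>0$. The occupation-times formula together with $\int R_\eps=1$ and $\mathrm{supp}\,R_\eps\subset[-c_R\eps,c_R\eps]$ gives, for $\eps<1$,
\[
\int_0^t R_\eps(X_s)\,ds=\int_\R R_\eps(a)\,L^a_t(X)\,da\le\sup_{|a|\le c_R\eps}L^a_t(X)\le\sup_{|a|\le c_R}L^a_t(X),
\]
and $\sup_{|a|\le c_R}L^a_t(X)$ has finite exponential moments of every order by joint continuity of local time and Barlow--Yor-type estimates (for the bridge, via absolute continuity on $[0,t/2]$ and a symmetric treatment of $[t/2,t]$). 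This proves (i) for $u_{\eps,f}$ with a constant independent of $\eps$; the statement for $u_f$ follows by the same computation with $R_\eps$ replaced by $\delta$, or a posteriori from (ii) and Fatou's lemma.

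For (ii), I would subtract the mild formulations, writing $\eta_\eps(s,y)=\int\phi_\eps(y-z)\eta(s,z)\,dz$ so that both solutions are driven by the same white noise $\eta$; the Burkholder--Davis--Gundy inequality for Walsh integrals together with Minkowski's inequality then yields a Gronwall estimate for $s\mapsto\sup_{y}\bfE[|u_{\eps,f}(s,y)-u_f(s,y)|^p]$ whose inhomogeneous term tends to $0$ with $\eps$ — here the uniform bound (i) supplies the domination and the convergence comes from $L^2_{\mathrm{loc}}$-convergence of the heat-kernel-smoothed noise; the estimate is uniform in $x$ and locally uniform in $t$. For (iii), write $u_f(t,x)-f(x)=(G_t\star f(x)-f(x))+\beta\int_0^t\int_\R G_{t-s}(x-y)u_f(s,y)\,W(ds,dy)$: the first term vanishes as $t\to0$ by continuity of $f\in C_b$, and the second has second moment $\beta^2\int_0^t\int_\R G_{t-s}(x-y)^2\bfE[u_f(s,y)^2]\,dy\,ds\le C\|f\|_\infty^2\int_0^t(t-s)^{-1/2}\,ds=C\sqrt t\to0$ by (i).

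For (iv), I would bound $\bfE[|u_f(t,x)-u_f(t',x')|^p]$ by the deterministic increment $G_t\star f(x)-G_{t'}\star f(x')$, which is jointly continuous on $(0,\infty)\times\R$, plus the temporal and spatial increments of the stochastic integral, each controlled via BDG and (i) by a positive power of $|t-t'|+|x-x'|$, locally uniformly on $(0,\infty)\times\R$; alternatively this is read off the Wiener chaos expansion, whose terms are continuous and whose series converges locally uniformly by the $L^2$ bounds behind (i). The only genuinely non-routine ingredient is the uniform-in-$\eps$ control of the exponential interaction functional in the key estimate above, which is precisely where the occupation-formula/local-time argument is needed; the remaining manipulations (BDG, Gronwall, dominated convergence, heat-kernel bounds) are standard, cf.\ \cite{walsh1986introduction,bertini1995stochastic,davar}.
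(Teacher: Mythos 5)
Your overall architecture is sound, and for (iii) and (iv) it coincides with the paper's argument (mild formulation plus BDG for the vanishing of the stochastic integral as $t\to0$, and Kolmogorov-type moment increments for continuity). For (i) and (ii) you take a genuinely different, self-contained route: the paper simply cites \cite[Theorem 1.7]{chen2019comparison} for the moment bound (with a footnote handling $p\in(1,2)$ via positivity and H\"older) and \cite[Theorem 2.2]{bertini1995stochastic} for the convergence $u_{\eps,f}\to u_f$, whereas you propose to reprove (i) by the replica/Feynman--Kac computation and (ii) by a Gronwall argument on the difference of mild formulations driven by the same noise. That is a legitimate alternative in principle (it is essentially how the cited results are proved), and your reduction of (i) to a uniform-in-$\eps$, uniform-in-endpoints exponential moment bound for the bridge interaction is exactly the right target.

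However, precisely at that target — which you correctly call the heart of the matter — there is a genuine gap. Your reduction to bridges pinned at a common point rests on the assertion that ``separating the bridge endpoints only decreases the functional.'' This monotonicity in the endpoints is not obvious and is not proved; it is a rearrangement-type statement (plausible for $R$ even and nonnegative, and verifiable for $R=\delta$ from the explicit law of bridge local time, but it requires an argument, not a remark). Without it, your fallback does not deliver uniformity either: for a bridge $X$ from $0$ to $z$ on $[0,t]$, the Radon--Nikodym density of its law on $[0,t/2]$ with respect to free Brownian motion is $G_{t/2}(z-X_{t/2})/G_t(z)\leq \sqrt{2}\,e^{z^2/(2t)}$, which blows up as $|z|\to\infty$, so the ``absolute continuity on $[0,t/2]$'' bound on $\sup_{|a|\leq c_R}L^a_t(X)$ is only uniform over endpoints in a compact set — i.e., exactly in the pinned case your unproved monotonicity was supposed to reduce to. To make (i) rigorous along your lines you need either a proof of the endpoint monotonicity, or a direct uniform bound on the conditioned exponential functional (e.g., via the chaos/Duhamel series for the kernel with potential, or an argument exploiting that a bridge to a distant endpoint spends little time near the support of $R_\eps$), or simply the citation the paper uses. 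A smaller point: in (ii), with $q_0=\delta$ the quantity $\sup_y\bfE[|u_{\eps,f}(s,y)-u_f(s,y)|^p]$ degenerates as $s\to0$ (moments are only bounded by $C(G_s\star f)^p$), so the Gronwall scheme needs a weight or a restriction to $t$ bounded away from $0$ with a separate treatment of the small-time part of the Duhamel integral, as in \cite{bertini1995stochastic}.
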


\begin{proof}
The moments bounds in (i) can be found\footnote{In \cite{chen2019comparison}, the bounds are stated only for $p\geq 2$.  However, it is also shown that, in our setting, $u_f, u_{\eps,f} >0$.  The bound for $p\in(1,2)$ follows by H\"older's inequality.} in \cite[Theorem 1.7]{chen2019comparison}. The convergence in (ii) was proved in \cite[Theorem 2.2]{bertini1995stochastic}. For (iii), we write 
\[
u_f(t,x)=G_t\star f(x)+\beta\int_0^t\int_{\R}G_{t-s}(x-y)u_f(s,y)\eta(s,y)dyds.
\]
By the moments bounds in (i) and the BDG inequality, we can show the term of the stochastic integral $\int_0^t\int_{\R}G_{t-s}(x-y)u_f(s,y)\eta(s,y)dyds$ goes to zero in $L^p(\Omega)$, as $t\to0$.  Finally, (iv) comes from the standard moment estimate: for any $n\in\N$ and $s,t\in[n^{-1},n],x,y\in\R$, there exists $C=C(n,p)$ such that 
\[
\bfE[|u_f(t,x)-u_f(s,y)|^p] \leq C (|t-s|^{\frac{p}{4}}+|x-y|^{\frac{p}{2}}).
\]
The proof is complete.
\end{proof}

Recall that 
\[
\begin{aligned}
q_\eps(t,x)=\frac{\int_{\R}U_\eps(-t,x;0,y)q_0(y)dy}{\int_{\R^2} U_\eps(-t,\tilde{x};0,y)q_0(y)dyd\tilde{x}},\quad\quad q(t,x)=\frac{\int_{\R}U(-t,x;0,y)q_0(y)dy}{\int_{\R^2} U(-t,\tilde{x};0,y)q_0(y)dyd\tilde{x}},
\end{aligned}
\]
with $U_\eps,U$ solving \eqref{e.sheeps}, \eqref{e.shephi} respectively. Define 
\begin{equation}\label{e.tilderho}
\begin{aligned}
\tilde{q}_\eps(t,x)=\frac{u_{\eps,q_0}(t,x)}{ \int_{\R} u_{\eps,q_0}(t,\tilde{x})d\tilde{x}},\quad\quad \tilde{q}(t,x)=\frac{u_{q_0}(t,x)}{ \int_{\R} u_{q_0}(t,\tilde{x})d\tilde{x}}.
\end{aligned}
\end{equation}
\begin{lemma}\label{l.timereversal}
For any $\eps>0$ and $t\geq0,x\in\R$, we have 
\[
q_{\eps}(t,x)=\tilde{q}_\eps(t,x),\quad\quad q(t,x)=\tilde{q}(t,x).
\]
\end{lemma}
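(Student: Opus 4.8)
\textbf{Proof proposal for Lemma~\ref{l.timereversal}.}

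The plan is to show that both $q_\eps$ and $q$ are obtained by normalizing the solution of a stochastic heat equation started from $q_0$, using a time-reversal identity for the propagator $U_\eps$ (respectively $U$). I will carry out the argument for $q_\eps$, since the case of $q$ (i.e.\ $\eps \to 0$, spacetime white noise in $d=1$) is identical once one uses the Wick-ordered exponential in place of the tilted exponential; alternatively, one can pass to the limit $\eps\to0$ in the identity $q_\eps = \tilde q_\eps$ using Lemmas~\ref{l.bdrhoeps} and~\ref{l.mmbd}(ii).

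First I would record the key time-reversal identity: for fixed $t>0$, the field $\{\xi_\eps(-s,\cdot)\}_{s\in[0,t]} = \{\eta_\eps(s,\cdot)\}_{s\in[0,t]}$ has the same law as $\{\xi_\eps(s,\cdot)\}_{s\in[0,t]}$ (white-in-time Gaussian fields are invariant under time reversal on a fixed window), but more importantly I want a \emph{pathwise} statement: by the Feynman–Kac representation already used in~\eqref{e.fk}, for any $y$,
\[
\int_{\R} U_\eps(-t,x;0,z)\,\delta(z-y)\,dz = U_\eps(-t,y;0,x)\Big|_{\text{reversed}},
\]
more precisely the propagator of~\eqref{e.sheeps} satisfies, for a fixed terminal time, the duality
\[
U_\eps(-t,x;0,y) \;\overset{d}{=}\; \bar U_\eps(0,y;t,x),
\]
where $\bar U_\eps$ solves the \emph{forward} SHE $\partial_t \bar U_\eps = \tfrac12\Delta\bar U_\eps + \beta\,\bar U_\eps\,\eta_\eps$ with $\eta_\eps(s,\cdot) = \xi_\eps(-s,\cdot)$. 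This is the statement that the Brownian bridge/Feynman–Kac functional is invariant under reversing the path $w_s \mapsto w_{t-s}$ together with the noise $\xi_\eps(-s,\cdot)\mapsto \xi_\eps(-(t-s),\cdot)$; since $q_\eps$ and $\tilde q_\eps$ are each defined as a ratio, the (law-level) equality of the numerators and a matching equality of the denominators is what I need. Concretely, I would show
\[
\int_{\R} U_\eps(-t,x;0,y)\,q_0(y)\,dy \;=\; u_{\eps,q_0}(t,x)
\]
in the sense of equality of random fields (in $x$, for fixed $t$), by checking that the left-hand side, as a function of $(t,x)$, solves the same SHE~\eqref{e.sheeps}-type equation driven by $\eta_\eps$ with initial datum $q_0$, and invoking uniqueness for the mild/It\^o–Walsh formulation (Lemma~\ref{l.mmbd} guarantees the moment bounds needed for a fixed-point/uniqueness argument). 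Dividing numerator and denominator then gives $q_\eps(t,x) = \tilde q_\eps(t,x)$ exactly.

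I expect the main obstacle to be making the time-reversal/duality precise at the level of the stochastic integrals rather than just in law: one must check that the It\^o–Walsh integral defining $U_\eps(-t,x;0,\cdot)$, when read backward in time, coincides with the It\^o–Walsh integral defining $u_{\eps,q_0}(t,\cdot)$ against the reversed noise $\eta_\eps$, including the correct It\^o correction term (this is exactly why the $-\tfrac12\beta^2 R_\eps(0)t$ normalization appears in~\eqref{e.fk} and why it is absent here after taking the ratio, since it cancels between numerator and denominator). The clean way to handle this is to not reverse the SPDE directly but to use the Feynman–Kac representation~\eqref{e.fk}: write both numerators as Brownian expectations, apply the change of variables $w_s\mapsto w_{t-s}$ in the path integral, and use that $\int_0^t \eta_\eps(s,w_s)\,ds = \int_0^t \xi_\eps(-s, w_{t-s}')\,ds$ for the reversed path $w'$ — the Brownian law is invariant under this reversal (for a bridge, or after integrating out the endpoint). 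The rest is bookkeeping: the $\tfrac12\beta^2 R_\eps(0)t$ factors cancel in the ratio, as do the starting/terminal normalizations, yielding the claimed identity; the convergence statement for $q = \tilde q$ then follows either by repeating the argument with the Wick exponential or by taking $L^p(\Omega)$ limits using Lemmas~\ref{l.bdrhoeps} and~\ref{l.mmbd}.
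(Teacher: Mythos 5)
Your final route—writing both numerators via the Feynman--Kac formula, reversing the path $w_s\mapsto w_{t-s}$ together with the deterministic substitution $s\mapsto t-s$ in the noise integral (so the same realization of $\eta_\eps=\xi_\eps(-\cdot,\cdot)$ appears on both sides), and invoking Brownian bridge reversibility before sending $\eps\to0$ for the white-noise case—is exactly the paper's proof, and you correctly insist on the pathwise rather than in-law identity. The preliminary remarks about an SPDE-uniqueness/duality argument and about the $\tfrac12\beta^2R_\eps(0)t$ factor only cancelling in the ratio (in fact it matches exactly at the level of the numerators) are inessential and do not affect the argument.
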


\begin{proof}
By the Feynman-Kac formula, we have 
\[
\begin{aligned}
\int_{\R}U_\eps(-t,x;0,y)q_0(y)dy=&\int_{\R} q_0(y)\E_y[\delta(w_t-x)e^{\beta\int_0^t\xi_\eps(-s,w_s)ds-\frac12\beta^2R_\eps(0)t}]dy\\
=&\int_{\R}q_0(y)G_t(x-y)\E_y[e^{\beta\int_0^t\xi_\eps(-s,w_s)ds-\frac12\beta^2R_\eps(0)t}\,|\,w_t=x]dy,
\end{aligned}
\]
where we recall that $\E_y$ is the expectation with respect to $w$ starting from $w_0=y$ and $G_t(\cdot)$ is the standard heat kernel. Changing variables in the exponent $s\mapsto t-s$ and using the time reversal $\eta_\eps(t,\cdot)=\xi_\eps(-t,\cdot)$, we find
\[
\begin{aligned}
\int_{\R}U_\eps(-t,x;0,y)q_0(y)dy=&\int_{\R} q_0(y)G_t(x-y)\E_y[e^{\beta \int_0^t\eta_\eps(t-s,w_{t-s})ds-\frac12\beta^2R_\eps(0)t}\,|\,w_t=x]dy\\
=&\int_{\R} q_0(y)G_t(x-y)\E_x[e^{\beta\int_0^t\eta_\eps(t-s,w_{s})ds-\frac12\beta^2R_\eps(0)t}\,|\,w_t=y]dy.
\end{aligned}
\]

On the other hand, we write 
\[
\begin{aligned}
u_{\eps,q_0}(t,x)=&\E_x[q_0(w_t)e^{\beta \int_0^t\eta_\eps(t-s,w_{s})ds-\frac12\beta^2R_\eps(0)t}]\\
=&\int_{\R} q_0(y)G_t(y-x)\E_x[e^{\beta\int_0^t\eta_\eps(t-s,w_{s})ds-\frac12\beta^2R_\eps(0)t}\,|\, w_t=y]dy.
\end{aligned}
\]
The two expressions are equal to each other, so we have $q_\eps(t,x)=\tilde{q}_\eps(t,x)$. By sending $\eps\to0$, we also have $q(t,x)=\tilde{q}(t,x)$, which completes the proof.
\end{proof}

\begin{lemma}\label{l.negativemm}
For any $p\geq1, T>0$, there exists $C=C(p,T)$ such that, for $\eps\in(0,1), t\in[0,T]$, we have
\[
\bfE\left[\big(\int_{\R^2} U_\eps(-t,\tilde{x};0,y)q_0(y)dyd\tilde{x}\big)^{-p}\right]+\bfE\left[\big(\int_{\R^2} U(-t,\tilde{x};0,y)q_0(y)dyd\tilde{x}\big)^{-p}\right]\leq C.
\]
\end{lemma}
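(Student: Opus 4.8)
The plan is to reduce the claim to a negative-moment bound on the total mass of the stochastic heat equation, and then to exploit the exponential-martingale structure of that mass.

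By Lemma~\ref{l.timereversal} and the Feynman--Kac identity in its proof, $\int_{\R^2}U_\eps(-t,\tilde x;0,y)q_0(y)\,dy\,d\tilde x=\int_\R u_{\eps,q_0}(t,x)\,dx=:M_\eps(t)$, and likewise $\int_{\R^2}U(-t,\tilde x;0,y)q_0(y)\,dy\,d\tilde x=\int_\R u_{q_0}(t,x)\,dx=:M(t)$. From the $L^1$ domination $\bfE[u_{\eps,q_0}(t,x)]\le CG_t\star q_0(x)$ in Lemma~\ref{l.mmbd}(i) and the convergence $u_{\eps,q_0}\to u_{q_0}$ in $L^1(\Omega)$ of Lemma~\ref{l.mmbd}(ii), dominated convergence gives $M_\eps(t)\to M(t)$ in $L^1(\Omega)$; thus, once we prove $\sup_{\eps\in(0,1),\,t\in(0,T]}\bfE[M_\eps(t)^{-p}]\le C(p,T)$, Fatou's lemma gives the same bound for $M(t)$, so it suffices to treat $M_\eps$. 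By positivity of the stochastic heat equation, $M_\eps$ is a strictly positive continuous martingale with $M_\eps(0)=\int q_0=1$, and integrating the It\^o--Walsh form of~\eqref{e.sheeps} in $x$ yields $d\langle M_\eps\rangle_t=\beta^2\langle R_\eps\star u_{\eps,q_0}(t,\cdot),u_{\eps,q_0}(t,\cdot)\rangle\,dt$. Applying It\^o to $\log M_\eps$ and setting $N_\eps(t):=\int_0^t M_\eps(s)^{-1}\,dM_\eps(s)$, we get $M_\eps=\cE(N_\eps)$ (the Dol\'eans--Dade exponential) with $\langle N_\eps\rangle_t=\beta^2\Gamma_\eps(t)$, where $\Gamma_\eps(t):=\int_0^t\langle R_\eps\star q_\eps(s,\cdot),q_\eps(s,\cdot)\rangle\,ds\ge0$ and $q_\eps=u_{\eps,q_0}/M_\eps$. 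Using the identity $\cE(N)^{-p}=\cE(-apN)^{1/a}\exp\!\big(\tfrac{p(1+ap)}{2}\langle N\rangle\big)$ for $a>1$, together with H\"older (exponents $a,a'$) and $\bfE[\cE(-apN_\eps)_t]\le1$, we obtain
\[
	\bfE\big[M_\eps(t)^{-p}\big]\le\Big(\bfE\big[\exp(\lambda_p\Gamma_\eps(t))\big]\Big)^{1/a'},\qquad \lambda_p:=\tfrac{a'p(1+ap)\beta^2}{2},
\]
so the entire problem reduces to the uniform estimate $\sup_{\eps\in(0,1),\,t\le T}\bfE[\exp(\lambda\Gamma_\eps(t))]<\infty$ for each fixed $\lambda>0$.

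This exponential-moment bound on the time-integrated overlap $\Gamma_\eps$ is the crux, and the genuine obstacle is that $\Gamma_\eps$ contains the factor $M_\eps^{-2}$ that we are trying to control, so Feynman--Kac cannot be applied to it directly. The plan is to expand $\exp(\lambda\Gamma_\eps(t))=\sum_{k\ge0}\frac{\lambda^k}{k!}\Gamma_\eps(t)^k$ with $\Gamma_\eps(t)^k=k!\int_{0<s_1<\dots<s_k<t}\prod_i\langle R_\eps\star q_\eps(s_i),q_\eps(s_i)\rangle\,ds$, to bound each factor by $\|q_\eps(s_i,\cdot)\|_{L^\infty}$ (since $q_\eps(s,\cdot)$ and $R_\eps$ are probability densities), and to control $\bfE[\prod_i\|q_\eps(s_i,\cdot)\|_{L^\infty}]$ by comparison---via the Feynman--Kac representation of the multipoint densities $\bfE[q_\eps(s,x_1)\cdots]$ and the Gibbs property---with the analogous functional of collision local times of independent Brownian motions, which in $d=1$ is finite with factorial-type moments; this makes the $k$-series converge for $\lambda$ below an explicit threshold, and the restriction on $\lambda$ is removed by splitting $[0,t]$ into short intervals and iterating with the Markov property of $s\mapsto\mu_s$. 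Equivalently, this is a uniform-in-$\eps$ lower tail estimate $\bfP(M_\eps(t)<\delta)\le\psi(\delta)$ with $\psi$ decaying faster than any power; note that for each fixed $\eps$ the cruder Cauchy--Schwarz bound $M_\eps(t)^{-1}\le e^{\beta^2R_\eps(0)t}\widetilde M_\eps(t)$, with $\widetilde M_\eps\overset{d}{=}M_\eps$, combined with $\sup_\eps\bfE[M_\eps(t)^{q}]<\infty$ (itself a consequence of the exponential moments of Brownian collision local times), already gives the statement with an $\eps$-dependent constant, and uniformity is then inherited through the $\eps\to0$ limit.

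I expect this exponential-moment bound on $\Gamma_\eps$, uniform in $\eps$, to be the only real difficulty; the martingale reduction, the algebraic identity, H\"older and Fatou are routine. It is precisely here that $d=1$ enters, through the finiteness and factorial moment growth of Brownian intersection local times.
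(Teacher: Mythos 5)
Your two initial reductions are correct: passing from the lemma to a negative-moment bound for the total mass $M_\eps(t)=\int_\R u_{\eps,q_0}(t,x)\,dx$ (via \Cref{l.timereversal}), and then, writing $M_\eps=\cE(N_\eps)$ and using the exponential identity, H\"older and $\bfE[\cE(-apN_\eps)_t]\le 1$, reducing everything to $\sup_{\eps\in(0,1),\,t\le T}\bfE[\exp(\lambda\Gamma_\eps(t))]<\infty$. The gap is that this last bound --- which you yourself call the crux --- is not proved, and the sketch offered for it does not work. The overlap $\Gamma_\eps(t)=\int_0^t\langle R_\eps\star q_\eps(s,\cdot),q_\eps(s,\cdot)\rangle\,ds$ is built from the \emph{normalized} density $q_\eps=u_{\eps,q_0}/M_\eps$, so it carries the factor $M_\eps(s)^{-2}$; controlling its exponential moments is essentially of the same difficulty as the negative-moment bound you set out to prove. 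Your plan to bound $\bfE[\prod_i\|q_\eps(s_i,\cdot)\|_{L^\infty}]$ ``via the Feynman--Kac representation of the multipoint densities and the Gibbs property'' is circular: the normalized multipoint densities $\bfE[q_\eps(s,x_1)\cdots q_\eps(s,x_n)]$ are precisely the $Q_{n,\eps}$ of this paper, which satisfy an open hierarchy and admit no closed Feynman--Kac/replica formula --- only the unnormalized moments $\bfE[u_{\eps,q_0}(s,x_1)\cdots u_{\eps,q_0}(s,x_n)]$ do. Comparing with collision local times of free Brownian motions amounts to discarding the normalization, i.e.\ to assuming $M_\eps$ is bounded below, which is exactly the event in question. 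The fallback you mention (Cauchy--Schwarz giving $M_\eps(t)^{-1}\le e^{c\beta^2R_\eps(0)t}\widetilde M_\eps(t)$) yields a constant of order $e^{c\beta^2 T/\eps}$, since $R_\eps(0)=R(0)/\eps$, and ``uniformity inherited through the $\eps\to0$ limit'' is not an argument: the statement requires one constant for all $\eps\in(0,1)$, and Fatou only transfers a uniform-in-$\eps$ bound to the limiting white-noise object, never conversely. Likewise, the proposed removal of the smallness restriction on $\lambda$ by iterating over short time intervals founders on the fact that the short-interval overlap bound is not uniform over the (random, possibly highly concentrated) measure $\mu_s$ at the start of each interval.

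For comparison, the paper's proof sidesteps all of this by reducing to a known input rather than reproving it: writing $\int_{\R^2}U_\eps(-t,\tilde x;0,y)q_0(y)\,dy\,d\tilde x=\int_\R U_{\eps,\1}(-t;0,y)q_0(y)\,dy$ with $U_{\eps,\1}$ the solution started from flat initial data, applying Jensen's inequality with respect to the probability measure $q_0(y)\,dy$ to bring the power $-p$ inside, using stationarity in $y$, and then quoting the uniform-in-$\eps$ negative moment bound $\sup_{\eps,\,t\le T}\bfE[u_{\eps,\1}(t,\cdot)^{-p}]\le C$ from \cite[Corollary 4.8]{hu2018asymptotics}. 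If you wish to pursue your martingale route, you would need an independent, uniform-in-$\eps$ proof of the exponential moments of $\Gamma_\eps$ (equivalently, a super-polynomial lower-tail estimate for $M_\eps$), which is not easier than invoking that cited input.
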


\begin{proof}
Define $U_{\eps,\1}(-t;s,y):=\int_{\R}U_\eps(-t,\tilde{x};s,y)d\tilde{x}$, which solves 
\[
\partial_sU_{\eps,\1}=\tfrac12\Delta_y U_{\eps,\1}+\beta\,U_{\eps,\1}\xi_\eps(s,y), \quad U_{\eps,\1}(-t;-t,y)=\1.
\]
By the statistical shift-invariance of the noise, we have
\[
U_{\eps,\1}(-t;0,\cdot)\stackrel{\text{law}}{=}u_{\eps,\1}(t,\cdot),
\]
 for each $t>0$. Combining this with Lemma~\ref{l.timereversal}, a simple observation is that, for each $t>0$, 
 \begin{equation}\label{e.obs}
 \int_\R u_{\eps,\1}(t,y)q_0(y)dy\stackrel{\text{law}}{=} \int_{\R} u_{\eps,q_0}(t,\tilde{x})d\tilde{x}.
 \end{equation}
By Jensen's inequality, we have 
\[
\big(\int_\R U_{\eps,\1}(-t;0,y)q_0(y)dy\big)^{-p} \leq \int_\R U_{\eps,\1}(-t;0,y)^{-p}q_0(y)dy,
\]
which implies 
\be\label{e.c6221}
	\begin{split}
		\bfE\left[\big(\int_{\R^2} U_\eps(-t,\tilde{x};0,y)q_0(y)dyd\tilde{x}\big)^{-p}\right]
			&\leq\int_\R \bfE[ U_{\eps,\1}(-t;0,y)^{-p}]q_0(y)dy\\
			&=\bfE[ U_{\eps,\1}(-t;0,\cdot)^{-p}],
	\end{split}
\ee
where we used the stationarity of $U_{\eps,\1}(-t;0,y)$ in the $y$ variable in the last step. The negative moment bound
\[
\sup_{\eps\in(0,1),t\in[0,T]}\bfE[ u_{\eps,\1}(t,\cdot)^{-p}] \leq C(p,\beta,T)
\]
can be found, e.g., in \cite[Corollary 4.8]{hu2018asymptotics}, and hence the proof is complete.
\end{proof}

\subsection{Colored noise in $d\geq 1$}
For colored noise $\eta_\phi$, we consider the equation
\[
\begin{aligned}
&\partial_t u_f(t,x)=\tfrac12\Delta u_f(t,x)+\beta\,u_f(t,x)\eta_\phi(t,x),  \quad t>0, x\in\R^d,\\
&u_f(0,x)=f(x),
\end{aligned}
\]
where $d\geq 1$ and either $0\leq f\in C_b(\R^d)$ or $f(x)=\delta(x)$. The follow lemma holds:
\begin{lemma}\label{l.mmbdhighD}
For any $p\geq1, T>0$, there exists $C=C(d,p,\beta,T)>0$ such that 
\[
\begin{aligned}
&\bfE[u_{f}(t,x)^{p}] \leq C (G_t\star f(x))^p, \\
&\bfE[u_{\1}(t,x)^{-p}]\leq C,
\end{aligned}
\]
for all $t\in (0,T],x\in\R^d$.
\end{lemma}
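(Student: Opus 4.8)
The plan is to obtain both estimates from the Feynman--Kac representation of $u_f$ in the colored-noise setting (cf.\ Section~\ref{s.poshe} and \eqref{e.fk}), using only that $R\in C_c^\infty(\R^d)$ is bounded; the dimension $d$ is irrelevant here, in contrast with the later estimates. Write
\[
	u_f(t,x)=\E_x\big[f(w_t)\exp\big(\beta H(t,w)-\tfrac12\beta^2R(0)t\big)\big],
		\qquad
	H(t,w)=\int_0^t\eta_\phi(t-s,w_s)\,ds,
\]
with $w$ a Brownian motion started at $x$; conditionally on $w$, the quantity $H(t,w)$ is centered Gaussian with variance $R(0)t$. When $f=\delta$ one replaces $w$ by a Brownian bridge and factors out the prefactor $G_t(x)=(G_t\star\delta)(x)$.

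For the positive-moment bound, introduce $p$ independent copies $w^1,\dots,w^p$ of $w$. Conditionally on the paths, $(H(t,w^j))_{j\le p}$ is jointly Gaussian with covariances $\int_0^tR(w^i_s-w^j_s)\,ds$; computing the resulting Gaussian Laplace transform and cancelling the $R(0)t$ terms gives the identity
\[
	\bfE[u_f(t,x)^p]=\E_x^{\otimes p}\Big[\prod_{j=1}^pf(w^j_t)\,\exp\Big(\beta^2\!\!\sum_{1\le i<j\le p}\int_0^tR(w^i_s-w^j_s)\,ds\Big)\Big].
\]
Since $0\le R\le\|R\|_\infty$ and $f\ge0$, the exponential factor is bounded by $e^{\beta^2\binom{p}{2}\|R\|_\infty T}$ for $t\le T$, and the remaining expectation factorizes into $\prod_{j=1}^p\E_x[f(w^j_t)]=(G_t\star f(x))^p$. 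For $f=\delta$ the same computation with $p$ independent bridges produces the bound $CG_t(x)^p$, which is of the claimed form. This establishes the first inequality.

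For the negative-moment bound take $f\equiv\1$, so $u_\1(t,x)=\E_x[\exp(\beta H(t,w)-\tfrac12\beta^2R(0)t)]$. Since $\E_x[\cdot]$ is a probability measure and $\bfE\E_x[H(t,w)^2]=R(0)t<\infty$, Jensen's inequality gives, almost surely, the \emph{pathwise} lower bound
\[
	u_\1(t,x)\ \ge\ \exp\big(\beta Z_{t,x}-\tfrac12\beta^2R(0)t\big),
		\qquad
	Z_{t,x}:=\E_x[H(t,w)],
\]
and $Z_{t,x}$ is a centered Gaussian, being a Wiener integral $\langle\eta,\Psi_{t,x}\rangle$ against a deterministic $L^2$ kernel $\Psi_{t,x}$. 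Conditional Jensen yields the uniform variance bound $\bfE[Z_{t,x}^2]\le\bfE\E_x[H(t,w)^2]=R(0)t\le R(0)T$ (equivalently, $\|\Psi_{t,x}\|_{L^2}^2=\int_0^t\|G_{t-r}\star\phi\|_{L^2}^2\,dr\le t\|\phi\|_{L^2}^2=tR(0)$ by Young's inequality). Hence $u_\1(t,x)^{-p}\le e^{\frac{p}{2}\beta^2R(0)t}e^{-p\beta Z_{t,x}}$, and taking $\bfE$ and using the Gaussian Laplace transform gives $\bfE[u_\1(t,x)^{-p}]\le e^{\frac12\beta^2R(0)T(p+p^2)}=:C$, completing the proof.

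I expect the only real work to be bookkeeping rather than analysis: pinning down the Feynman--Kac representation and the joint-Gaussian conditional structure precisely (in particular the $f=\delta$/Brownian-bridge version and the exact time-orientation inside $H$, neither of which affects the final bounds) and justifying the Fubini interchanges used to write $Z_{t,x}=\int_0^t\E_x[\eta_\phi(t-s,w_s)]\,ds$ and to pull $\bfE$ through the Brownian expectations. There is no genuine obstacle, in contrast with the estimates of Section~\ref{s.qclt} where the fast decay of the heat kernel in $d\ge3$ is essential; the positive-moment bound could also simply be quoted from the colored-noise literature.
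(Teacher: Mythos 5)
Your proposal is correct, but it takes a different route from the paper: the paper does not prove this lemma from scratch at all, it simply quotes the positive-moment bound from \cite[Theorem 1.7]{chen2019comparison} and the negative-moment bound from \cite[Corollary 4.8]{hu2018asymptotics}, whereas you give a self-contained argument. Your positive-moment computation is essentially the replica representation that the paper itself deploys later in the proof of Lemma~\ref{l.bdQn}; the difference is that there the conditional exponential moment must be bounded uniformly in $t>0$ (hence the appeal to \cite[Lemma A.2]{kpz1} and the smallness of $\beta$), while for the present lemma the crude bound $R\leq\|R\|_\infty$ on $[0,T]$ suffices and correctly produces a constant $C(d,p,\beta,T)$ with no restriction on $\beta$ — this matches the statement. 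Your negative-moment argument (pathwise Jensen to get $u_\1(t,x)\geq\exp(\beta Z_{t,x}-\tfrac12\beta^2R(0)t)$ with $Z_{t,x}$ a Gaussian of variance at most $R(0)T$, then the Gaussian Laplace transform) is considerably more elementary than the Malliavin-calculus machinery behind \cite{hu2018asymptotics}, and it gives an explicit constant; what the citation buys the authors is generality and brevity, what your argument buys is transparency and uniformity in $x$ for free. Two small points of bookkeeping that you should make explicit: the replica identity is valid for integer $p$, so for non-integer $p\geq1$ you should add the one-line interpolation $\bfE[u_f^p]\leq\bfE[u_f^{\lceil p\rceil}]^{p/\lceil p\rceil}$ (legitimate since $u_f\geq0$; the paper does the analogous patching in a footnote for the white-noise case), and the Jensen step should be phrased for $\bfE$-a.e.\ realization of the noise together with the stochastic Fubini identification $Z_{t,x}=\int_0^t\int_{\R^d}(\phi\star G_s)(x-y)\,\eta(t-s,y)\,dy\,ds$, which is justified exactly as you indicate since the kernel is deterministic and square integrable. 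Neither point is a gap.
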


The positive moment bound can be found in \cite[Theorem 1.7]{chen2019comparison}, and the negative moment bound in \cite[Corollary 4.8]{hu2018asymptotics}.

\section{Proofs of Lemmas~\ref{l.bdrhoeps}, \ref{l.conQ}, \ref{l.tzero} and \ref{l.bdQn}}
\label{s.lem}

\begin{proof}[Proof of Lemma~\ref{l.bdrhoeps}]
By Lemma~\ref{l.timereversal}, we know that 
\[
q_\eps(t,x)=\frac{u_{\eps,q_0}(t,x)}{ \int_{\R} u_{\eps,q_0}(t,\tilde{x})d\tilde{x}}
	\quad
	\text{ and }
	\quad
q(t,x)=\frac{u_{q_0}(t,x)}{ \int_{\R} u_{q_0}(t,\tilde{x})d\tilde{x}}.
\]
Applying Lemma~\ref{l.mmbd} (i), Lemma~\ref{l.negativemm} and H\"older's inequality, we derive \eqref{e.bdrhoeps}. To show the convergence of $q_\eps(t,x)\to q(t,x)$, we write 
\[
\begin{aligned}
q_\eps(t,x)-q(t,x)=&\frac{u_{\eps,q_0}(t,x)-u_{q_0}(t,x)}{\int_{\R} u_{\eps,q_0}(t,\tilde{x})d\tilde{x}}\\
&+\frac{u_{q_0}(t,x)}{\int_{\R} u_{\eps,q_0}(t,\tilde{x})d\tilde{x} \int_{\R} u_{q_0}(t,\tilde{x})d\tilde{x}}\big( \int_{\R} u_{q_0}(t,\tilde{x})d\tilde{x}-\int_{\R} u_{\eps,q_0}(t,\tilde{x})d\tilde{x}\big).
\end{aligned}
\]
The first term on the r.h.s.\ goes to zero in $L^p(\Omega)$ by Lemma~\ref{l.mmbd} (ii) and Lemma~\ref{l.negativemm}. Together with  \eqref{e.obs}, we can show the second term goes to zero similarly.
\end{proof}

\begin{proof}[Proof of Lemma~\ref{l.conQ}]
The continuity of $Q_n$ follows from \eqref{e.bdrhoeps}, H\"older's inequality and the convergence of
\begin{equation}\label{e.conq}
\bfE[|q(t_n,x_n)-q(t,x)|^p]\to0
\end{equation} if $(t_n,x_n)\to (t,x)\in(0,\infty)\times \R$ as $n\to\infty$. To prove \eqref{e.conq}, it suffices to apply Lemmas~\ref{l.mmbd} (iv), \ref{l.timereversal} and \ref{l.negativemm}. We omit the details here.
\end{proof}

\begin{proof}[Proof of Lemma~\ref{l.tzero}]
By the assumption of $q_0\in C_c(\R)$ and  Lemma~\ref{l.mmbd} (i), we know that $\E[|q(t,x)|^p]\leq C$, uniformly in $t\in(0,T],x\in\R$. Then it suffices to show that for $x\in\R$, $q(t,x)\to q_0(x)$ in $L^p(\Omega)$ as $t\to0$. We write
\[
q(t,x)-q_0(x)=\frac{u_{q_0}(t,x)-q_0(x)}{ \int_{\R} u_{q_0}(t,\tilde{x})d\tilde{x}}+\frac{q_0(x)}{ \int_{\R} u_{q_0}(t,\tilde{x})d\tilde{x}}\big(1- \int_{\R} u_{q_0}(t,\tilde{x})d\tilde{x}\big),
\]
and the first term on the r.h.s.\ goes to zero in $L^p(\Omega)$ by Lemma~\ref{l.mmbd} (iii) and Lemma~\ref{l.negativemm}. For the second term, the proof is similar  with another use of \eqref{e.obs} and the fact that $\int q_0=1$.
\end{proof}

\begin{proof}[Proof of Lemma~\ref{l.bdQn}]
The goal is to show 
\[
Q_n(t,x_1,\ldots,x_n)=\bfE[q(t,x_1)\ldots q(t,x_n)] \leq C \prod_{j=1}^n G_t(x_j)
\]
in $d\geq3$ when $\beta$ is small. By Lemma~\ref{l.timereversal}, we have 
 \[
 q(t,x)= \frac{u_{q_0}(t,x)}{\int_{\R^d} u_{q_0}(t,\tilde{x})d\tilde{x}},
 \]
 with $u_{q_0}$ the solution to the stochastic heat equation starting from $q_0$:
 \[
 \partial_t u_{q_0}=\tfrac12\Delta u_{q_0}+\beta u_{q_0}\eta_\phi, \quad\quad u_{q_0}(0,x)=q_0(x).
 \]
Recall that here we consider the initial data $q_0(x)=\delta(x)$.

By the proof of Lemma~\ref{l.negativemm} (see~\eqref{e.c6221}), we have 
\[
\bfE\Big[\Big(\int_{\R^d} u_{q_0}(t,\tilde{x})d\tilde{x}\Big)^{-p}\Big] \leq \bfE[ u_{\1}(t,\cdot)^{-p}],
\]
with $u_{\1}$ solving the equation with constant initial data 
\[
\partial_t u_{\1}=\tfrac12\Delta u_{\1}+\beta u_{\1}\eta_\phi, \quad\quad u_{\1}(0,x)=1.
 \]
By \cite[Proposition 2.3]{kpz1}, we know that for any $d\geq3$ and $p\geq1$, there exists positive constants $\beta_0,C>0$ such that if $\beta<\beta_0$, then 
\[
\sup_{t\geq0} \bfE[ u_{\1}(t,\cdot)^{-p}] \leq C.
\]
By H\"older's inequality, this implies 
\[
Q_n(t,x_1,\ldots,x_n) \leq C \prod_{j=1}^n \bfE[ u_{q_0}(t,x_j)^{2n}]^{\frac{1}{2n}}. 
\]
Thus, the proof of the lemma reduces to the following moment estimate of $u_{q_0}(t,x)$: for any $n\geq1$, there exists $\beta_0,C>0$ such that if $\beta<\beta_0$, then 
\begin{equation}\label{e.141}
\bfE[ u_{q_0}(t,x)^n]^{\frac{1}{n}} \leq CG_t(x), \quad\quad \mbox{ for all } t>0,x\in\R^d.
\end{equation}

To prove \eqref{e.141}, we first express $u_{q_0}$ by the Feynman-Kac formula  as 
\[
u_{q_0}(t,x)=\E_x[q_0(w_t)e^{\beta \int_0^t \eta_\phi(t-s,w_s)ds-\frac12\beta^2R(0)t}],
\]
with $\E_x$ the expectation with respect to the Brownian motion $w$ starting at $x$. 
By the replica method, the $n-$th moment equals
\[
\bfE[ u_{q_0}(t,x)^n]=\E_x\big[\prod_{j=1}^n q_0(w^j_t)e^{\beta^2\sum_{1\leq \ell_1<\ell_2\leq n}\int_0^t R(w^{\ell_1}_s-w^{\ell_2}_s)ds}\big],
\]
with $\{w^j\}_{j=1,\ldots,n}$ independent Brownian motions starting at $x$. By conditioning on the positions of $w^j$ at time $t$, we further write the expectation as an integral
\[
\begin{aligned}
&\bfE[ u_{q_0}(t,x)^n]\\
&=\int_{\R^{nd}}\prod_{j=1}^n q_0(y_j)G_t(x-y_j) \E_x[e^{\beta^2 \sum_{1\leq \ell_1<\ell_2\leq n}\int_0^t R(w^{\ell_1}_s-w^{\ell_2}_s)ds}\,|\,w_t^j=y_j,j=1,\ldots,n]dy_1\ldots dy_n.
\end{aligned}
\]
By \cite[Lemma A.2]{kpz1}, the following estimate holds for $\beta\ll1$:
\[
\sup_{t>0,x,y_1,\ldots,y_n\in\R^d} \E_x[e^{\beta^2 \sum_{1\leq \ell_1<\ell_2\leq n}\int_0^t R(w^{\ell_1}_s-w^{\ell_2}_s)ds}\,|\,w_t^j=y_j,j=1,\ldots,n] \leq C,
\]
which yields
\[
\bfE[ u_{q_0}(t,x)^n]\leq C \big(\int_{\R^d} q_0(y)G_t(x-y)dy\big)^n=CG_t(x)^n,
\]
and completes the proof.
\end{proof}




\end{document}